\newtheorem{prop}{Proposition}[section]
\newtheorem{thm}[prop]{Theorem}
\newtheorem{lem}[prop]{Lemma}
\newtheorem{claim}[prop]{Claim}
\newtheorem{defi}[prop]{Definition}
\begin{document}
\title{Cheeger-Colding-Tian theory for conic
K\"{a}hler-Einstein metrics}
\author{Gang Tian, Feng Wang}
\maketitle
\section{Introduction}

In a series of papers [CC1], [CC2], [CC3], Cheeger and Colding studied singular structures of spaces which arise as limits of sequences of Riemannian manifolds with Ricci curvature bounded below in the Gromov-Hausdorff topology. One of fundamental results they proved is the existence of tangent cones of the limit space [CC2], that is,

\begin{thm}([CC2])\label{thm-cc1}
Let $(M_i,g_i;p_i)$ be a sequence of $n$-dimentional Riemannian manifolds satisfying:
$${\rm Ric}_{M_i}(g_i)\,\geq\,{- (n-1)\Lambda^2g_i}~\text{and}~{\rm vol}_{g_i}( B_{p_i}(1))\,\geq\, v\,>\,0.$$
Assume that $(M_i,g_i;p_i)$ converge to a metric space $(Y,d;p_\infty)$ in the pointed Gromov-Hausdorff topology.  Then for any $y\in Y$ and sequence $\{r_j\}$ with $r_j\to 0$, there is a subsequence, say $\{\bar r_k= r_{j(k)}\}$, such that $(Y, \bar r_k^{-2} d; y)$ converge in the pointed Gromov-Hausdorff topology to a metric space $T_yY$ which is a metric cone over another metric space whose diameter is less than $\pi$. such a $T_yY$ is referred as a tangent cone of $Y$ at $y$.
\end{thm}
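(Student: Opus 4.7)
The plan is to extract a subsequential pointed Gromov-Hausdorff limit of the rescaled spaces and then identify it as a metric cone using the Cheeger-Colding \emph{volume cone implies metric cone} theorem. The basic observation is that rescaling the metric by $r_j^{-2}$ relaxes the Ricci lower bound to $-(n-1)\Lambda^2 r_j^2$, so along $r_j \to 0$ one effectively works in the almost non-negative Ricci setting.

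First, by a diagonal argument, I would produce an approximating sequence of smooth Riemannian manifolds $(M_i, \bar r_k^{-2} g_i; p_i)$ whose pointed GH limits reproduce $(Y, \bar r_k^{-2} d; y)$. Volume non-collapsing of the unit ball is scale invariant, and Bishop-Gromov gives uniform doubling on every compact scale, so Gromov's pre-compactness theorem yields a further subsequence converging in the pointed GH topology to a complete metric space $(T_y Y, d_\infty; y_\infty)$.

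Next I would show $T_y Y$ is a metric cone. Let $\nu$ denote the Cheeger-Colding renormalized limit measure on $Y$ and set $\Theta(r) = \nu(B_y(r))/r^n$. The asymptotic vanishing of the Ricci lower bound together with Bishop-Gromov passed through to the limit implies that $\Theta$ is monotone non-increasing on $(0,\infty)$, so $\Theta_y := \lim_{r \to 0^+} \Theta(r)$ exists. For fixed $\rho > 0$, the volume ratio at scale $\rho$ in the rescaled space $(Y, \bar r_k^{-2} d)$ equals $\Theta(\bar r_k \rho)$, which tends to $\Theta_y$ as $k \to \infty$. Hence in $T_y Y$ the volume ratio function is identically $\Theta_y$; that is, $T_y Y$ is a volume cone centered at the base point.

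The main obstacle is the volume-cone-implies-metric-cone theorem itself, which upgrades this measure-theoretic identity to an honest metric cone splitting. Its proof requires constructing almost-radial harmonic approximations on the manifolds $M_i$, proving an integral Pythagorean-type identity whose error is controlled by the vanishing Ricci bound via the Cheeger-Colding segment inequality, and passing the resulting estimate $|\nabla d(\cdot,y)|^2 \approx 1$ together with asymptotic flatness of level sets through to the GH limit. Granted this, $T_y Y = C(Z)$ for a compact cross section $Z$, and the bound $\mathrm{diam}(Z) \leq \pi$ follows from Bishop-Gromov applied on $Z$; equivalently, a longer geodesic in $Z$ would produce a line in $C(Z)$ and force a further splitting via the Cheeger-Colding splitting theorem.
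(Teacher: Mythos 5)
First, note that the paper does not prove Theorem \ref{thm-cc1} at all: it is quoted from [CC2] as background, and the paper's own work (Sections 2--5) consists of adapting the Cheeger--Colding machinery to the singular setting of Definition \ref{manifold}. So your sketch can only be measured against the standard [CC2] argument, and in outline it is that argument: rescale, use scale-invariant non-collapsing plus Bishop--Gromov and Gromov precompactness to extract $T_yY$, use monotonicity of volume ratios to get an (almost) volume cone at small scales, and upgrade to a metric cone via the almost-rigidity theorems. Where you differ slightly from [CC2] (and from the route this paper follows for its singular analogue, Proposition \ref{metriccone}) is that you phrase the cone condition through the renormalized limit measure $\nu$ and an exact volume-cone identity on the limit, whereas the classical proof, and this paper's Lemmas \ref{harmonic-estimate-annual-1}--\ref{volumecone}, work at the level of the approximating manifolds: a pigeonhole on the monotone volume ratio selects scales where an almost-volume-annulus condition holds, a harmonic approximation to $r^2/2$ on the annulus gives the cosine-law identity with small error, and the exact cone structure appears only after passing to the limit (Lemma \ref{cone}). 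Your version additionally needs continuity of the renormalized measure under non-collapsed GH convergence (Colding's volume convergence), which you use implicitly when asserting that the volume ratio of $T_yY$ is identically $\Theta_y$.

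Two caveats. The step you call the main obstacle, ``volume cone implies metric cone,'' is not a peripheral lemma but the entire analytic content of the theorem; as written you black-box it, and moreover the exact, limit-space form you invoke is not literally the [CC1] almost-rigidity statement (one either runs the almost version on the $M_i$ at selected scales, as above, or appeals to the later nonsmooth rigidity results such as [Gi]). So your text is an outline of the proof rather than a proof. Second, the diameter bound: ``Bishop--Gromov applied on $Z$'' does not make sense as stated, since no Ricci-type bound on the cross section is available at this elementary level; the correct argument is the one you mention in passing --- if ${\rm diam}(Z)>\pi$ two rays through the vertex concatenate to a line, the splitting theorem for limit spaces forces $C(Z)\cong \mathbb{R}\times C(Z')$, hence $Z$ is a spherical suspension and has diameter exactly $\pi$, a contradiction --- and this yields ${\rm diam}(Z)\le\pi$ (the strict inequality in the paper's statement is in any case a typo, as $T_yY=\mathbb{R}^n=C(S^{n-1})$ shows).
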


Note that the tangent cone $T_yY$ is not necessarily unique and may depend on the sequence $\{r_j\}$.
As an application of this theorem, Cheeger and Colding were able to introduce a stratification of singularities of the limit space $Y$.

\begin{defi}\label{singular-type}  Let $(Y,d;p_{\infty})$ be the limit of $(M_i,g_i;p_i)$ as in Theorem \ref{thm-cc1}. Denote by $\mathcal R$ the set of points which has a tangent cone isometric to $\mathbb R^n$ and $\mathcal S\,=\,Y\setminus \mathcal R$. For $k\leq n-1$, we say that $y\in\mathcal S_k$ if there exist no tangent cones at $y$ which can split off a Euclidean space $\mathbb R^l$ isometrically with $l>k$.
\end{defi}
Applying Theorem \ref{thm-cc1} to iterated tangent cones, Cheeger and Colding showed
\begin{thm}([CC2])
We have that $\mathcal S\,=\,\cup_{k=0}^{n-2}\, \mathcal S_k$ and $\dim \mathcal S_k\,\leq\, k$, where $\dim$ denotes the Hausdorff dimension.
\end{thm}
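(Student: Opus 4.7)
The plan is to prove the two assertions in turn. Since the strata are nested, $\mathcal S_0\subset\cdots\subset\mathcal S_{n-1}$, the equality $\mathcal S=\cup_{k=0}^{n-2}\mathcal S_k=\mathcal S_{n-2}$ reduces to the assertion $\mathcal S_{n-1}\subset\mathcal S_{n-2}$: every tangent cone that isometrically splits an $\mathbb R^{n-1}$-factor must in fact equal $\mathbb R^n$. Suppose $T_yY=\mathbb R^{n-1}\times Z$. Non-collapsing together with Colding's volume convergence gives $\dim_{\mathcal H}T_yY=n$, so $\dim_{\mathcal H}Z=1$; by Theorem \ref{thm-cc1} $Z$ is itself a one-dimensional metric cone, hence of the form $\mathbb R$, $\mathbb R_+$, or a finite wedge of rays. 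All options other than $\mathbb R$ are ruled out by applying the Cheeger-Colding almost-splitting theorem at $y$: the $\mathbb R^{n-1}$-splitting is produced by almost-linear harmonic functions on the approximating manifolds, and combined with non-collapsing and the absence of a boundary for non-collapsed Ricci limit spaces [CC1], this forces $Z$ to be geodesically complete. Hence $Z=\mathbb R$, $T_yY=\mathbb R^n$, and $y\in\mathcal R$.

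For the main bound $\dim_{\mathcal H}\mathcal S_k\leq k$, my approach is Federer-type dimension reduction. Suppose for contradiction that $\mathcal H^s(\mathcal S_k)>0$ for some $s>k$. A density-point argument produces $y\in\mathcal S_k$ and scales $r_j\to 0$ such that $(Y,r_j^{-2}d,y)$ Gromov-Hausdorff converges to a tangent cone $(C_y,0)$ and, along a further subsequence, the rescaled sets $r_j^{-1}(\mathcal S_k-y)$ converge in the Hausdorff topology on compact sets to a closed subset $\tilde{\mathcal S}\subset C_y$ of Hausdorff dimension at least $s$. The crucial structural claim is that every $z\in\tilde{\mathcal S}$ lies in the $k$-th singular stratum of $C_y$: any tangent cone to $C_y$ at $z$ that splits $\mathbb R^l$ for $l>k$ would, by a diagonal subsequence extraction combining the rescalings producing $C_y$ from $Y$ and the tangent cone at $z$ from $C_y$, appear as a tangent cone to $Y$ at $y$, contradicting $y\in\mathcal S_k$.

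Now exploit the cone structure. Since $C_y$ is a metric cone with vertex $0$, at every non-vertex $z\in C_y$ the tangent cone $T_zC_y$ automatically splits off the $\mathbb R$-factor corresponding to the radial direction. Selecting a positive $s$-density point $z_1\in\tilde{\mathcal S}\setminus\{0\}$ (which exists since $\mathcal H^s(\tilde{\mathcal S})>0$ and $\{0\}$ is $\mathcal H^s$-null for $s>0$) and blowing up $C_y$ at $z_1$ produces an iterated tangent cone $\mathbb R\times C^{(1)}$ carrying a further trace of the singular stratum, still of Hausdorff dimension $\geq s$. Iterating inside the successive iterated tangent cones, each step splits off one additional $\mathbb R$-factor while preserving a trace of dimension $>k$. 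After at most $k+1$ iterations the diagonal extraction realizes the resulting cone as a genuine tangent cone of $Y$ at $y$ that splits $\mathbb R^{k+1}$, contradicting $y\in\mathcal S_k$. Therefore $\mathcal H^s(\mathcal S_k)=0$ for all $s>k$, and $\dim_{\mathcal H}\mathcal S_k\leq k$.

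The principal obstacle I anticipate is the joint management of Gromov-Hausdorff convergence and Hausdorff-measure information across the blow-up tower: verifying that $\mathcal H^s$ of the trace set survives each rescaling, that the trace stays inside the appropriate stratum of the iterated tangent cone, and that the final iterated tangent cone is genuinely realized as a single-scale tangent cone of $Y$ at $y$. This requires the full strength of Cheeger-Colding's volume and measure convergence theorems together with a carefully chosen diagonal subsequence argument.
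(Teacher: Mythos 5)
Your overall strategy (splitting off the codimension-one case, then a Federer-type dimension reduction through iterated tangent cones) is the same family of argument that [CC2] uses and that this paper merely cites, but two steps that carry essentially all of the difficulty are not actually proved. First, in the reduction of $\mathcal S=\cup_{k\le n-2}\mathcal S_k$ to ruling out $Z=\mathbb R_+$: the almost-splitting theorem cannot eliminate the half-space $\mathbb R^{n-1}\times\mathbb R_+$, since that space is perfectly consistent with splitting off $\mathbb R^{n-1}$ (it even splits every line it contains), and your remaining justification --- ``the absence of a boundary for non-collapsed Ricci limit spaces [CC1]'' --- is circular: no such theorem is in [CC1], and the assertion that no tangent cone is a half-space is precisely the content of $\mathcal S=\mathcal S_{n-2}$ (a separate theorem in [CC2] with its own genuine proof). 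As written, the codimension-two statement is assumed rather than proved.

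Second, the ``crucial structural claim'' that the blow-up trace satisfies $\tilde{\mathcal S}\subseteq\mathcal S_k(C_y)$ is justified by asserting that a tangent cone of $C_y$ at a point $z\neq 0$ can be realized, by a diagonal extraction, as a tangent cone of $Y$ at $y$; this is false. Rescalings of $C_y$ about $z$ correspond to rescalings of $Y$ about points at distance roughly $|z|\,r_j$ from $y$, i.e.\ about base points that drift away from $y$, and such diagonal limits need not be tangent cones of $Y$ at $y$ (nor at any single point). The same false identification underlies your terminal contradiction, in which the cone obtained after $k+1$ blow-ups is claimed to be ``a genuine tangent cone of $Y$ at $y$'' splitting $\mathbb R^{k+1}$. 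In the correct argument the contradiction is intrinsic to the last blow-up --- a space splitting $\mathbb R^{k+1}$ isometrically has $\mathcal S_k=\emptyset$ --- and what must be tracked are the strata of the successive blow-up limits themselves, together with a semicontinuity/density statement guaranteeing that a set of positive $\mathcal H^s$-content lands in the correct stratum at each stage; note that limits of singular points are in general not singular in the blow-up, so this containment cannot be waved through and is exactly where [CC2] does real work. Until these two points are supplied, the proposal is an outline of the known strategy rather than a proof.
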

Based on the above theorem on existence of tangent cones, Cheeger, Colding and Tian [CCT] give further constraints on singularities of the limit space $Y$
under certain curvature condition for $(M_i,g_i)$ (also see Cheeger [Ch3]).

The purpose of this paper is to extend the Cheeger-Colding Theory to the following class of metrics.
This extension provides a technical tool for [LTW] in which we prove a version of the Yau-Tian-Donaldson conjecture for Fano varieties with certain singularity.

 \begin{defi}\label{manifold}
A length space $(M^n,d)$ is called a $n$-dimensional Riemannian manifold with singularity if there exists $S\subseteq M$ with $\mathcal H^n(S)\,=\,0$ such that the followings hold:\\

\noindent
i) $\mathcal R\,=\,M\setminus \mathcal S$ is a smooth manifold and convex, moreover, the distance function $d$ is induced from a smooth metric $g$ on $\mathcal R$.\\

\noindent
ii) for any $\epsilon>0$, denoting $T_\epsilon\,=\,\{x\,|\,dist(x, \mathcal S)\leq \epsilon\}$, there is a cut-off function $\gamma_\epsilon \in C^\infty_0(M\setminus \mathcal S)$ and
 $$\gamma_\epsilon\,\equiv\,1 \text{ in }~T_\epsilon(\mathcal S), ~\int_{\mathcal R}|\nabla \gamma_\epsilon|^2\,\leq \,\epsilon.$$\\

\noindent
iii) for any domain $U\,\subseteq\, M$ and a continuous function $b$ defined in a neighborhood of $\bar U$, there is a bounded function $h$ which is locally Lipschitz in $U$ and continuous in $\overline U\cap \mathcal R$ such that

 \[ \left \{ \begin{array}{l}
\Delta h \,=\, 0\text{ in }\mathcal R,\\
h|_{\partial U\bigcap \mathcal R}\,=\,b|_{\partial U\bigcap \mathcal R}.
\end{array} \right. \]
\end{defi}
We will study the limit space of the $n-$dimensional Riemannian manifolds with singularity whose Ricci curvature is bounded from below.
Let $\mathcal{M}(V,D,n)$ be the set of n-dimensional Riemannian manifolds $(M,d)$ with singularities satisfying:
$$\mathcal H^n(M)\,\geq\, V,~~ diam(M,d)\,\leq\, D,~~ {\rm Ric}(g)\,\geq\, 0\text{ in }\mathcal R.$$

Let $(M_i,d_i)$ be a sequence of manifolds in $\mathcal M(V,D,n)$ and $(M_i,d_i)\rightarrow (X,d)$. In this paper, we will prove
\begin{thm}\label{thm1}
For any $x\in X$ and sequence $\{r_j\}$ with $r_j\to 0$, there is a subsequence, say $\{\bar r_k= r_{j(k)}\}$, such that $(X, \bar r_k^{-2} d; x)$ converge in the pointed Gromov-Hausdorff topology to a metric space $T_xX$ which is a metric cone. Such $T_xX$ is referred as a tangent cone of $X$ at $x$. Moreover, there is a decomposition of $X$ into $\mathcal R\cup \mathcal S$ such that $\mathcal S=\mathcal S_{2n-2}$ and $\dim \mathcal S_k\leq k$, where $\mathcal S_k$ is defined as above.
\end{thm}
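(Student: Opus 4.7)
The plan is to follow the Cheeger--Colding blueprint for Theorem \ref{thm-cc1}, with conditions (ii) and (iii) of Definition \ref{manifold} replacing smoothness, and uniform volume doubling inherited from a Bishop--Gromov comparison on the approximating sequence. The smooth locus $\mathcal R$ is where all analytic identities live, and the cut-off $\gamma_\epsilon$ is the device that makes integration by parts admissible across $\mathcal S$ with negligible error.

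First I would record the consequences of $\mathrm{Ric}\ge 0$ on $\mathcal R$: Bishop--Gromov volume monotonicity (proved first on $\mathcal R$ along minimizers staying in $\mathcal R$, then extended to all of $M$ by a limit argument using (ii) together with $\mathcal H^n(\mathcal S)=0$), and a weak Laplacian comparison for the distance function. These yield uniform doubling of $\mathcal H^n$, hence Gromov precompactness, and let us pass to a pointed limit $(X,d;x)$ with a well-behaved limit measure. Second, I would prove a singular analogue of the Cheeger--Colding almost splitting theorem. Given an almost line near some $p_i$, condition (iii) produces a bounded Lipschitz harmonic approximation $h$ of the Busemann function on $\mathcal R$; then the Bochner identity on $\mathcal R$, multiplied by $\gamma_\epsilon^2$ and integrated, yields (after sending $\epsilon\to 0$ using $\int|\nabla\gamma_\epsilon|^2\le\epsilon$ and $\mathcal H^n(\mathcal S)=0$) the $L^2$ Hessian estimate of [CC2]. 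This produces an isometric $\mathbb R$-factor in blow-up limits.

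With almost splitting in hand the remaining structure is essentially formal. The volume-cone-implies-metric-cone theorem, derived from almost-monotonicity of $r^{-n}\mathcal H^n(B_r)$ together with almost splitting applied to distance functions from $x$, gives that tangent cones exist as metric cones along any sequence of scales. Iterating tangent cones and invoking the standard covering argument from [CC2] then yields the stratification dimension bound $\dim\mathcal S_k\le k$. The codimension-two identity $\mathcal S=\mathcal S_{2n-2}$ does not follow from the definition of manifold-with-singularity alone; in the conic K\"ahler--Einstein setting that motivates the paper it is delivered by the fact that the complex structure $J$ survives in tangent cones and forces any Euclidean factor of a tangent cone to be $J$-invariant, hence of even real dimension. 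Thus an $\mathbb R^{2n-1}$-split forces an $\mathbb R^{2n}$-split and so $x\in\mathcal R$.

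The main obstacle is the almost splitting step. The analytic heart of [CC2] is a sharp gradient and Hessian control of the harmonic approximation of the Busemann function, which in the smooth setting combines pointwise Laplacian comparison, Bochner, and a maximum principle. Here every identity must be paired with $\gamma_\epsilon$, so one must secure (i) a Lipschitz bound on the harmonic function from (iii) that is uniform on compact subsets of $\mathcal R_i\subset M_i$, and (ii) control of the error from localizing away from $\mathcal S$ in the limit. Condition (ii) is precisely what renders these error terms summable; once these technical estimates are in place, the remainder of the Cheeger--Colding theory transfers to our singular setting with only bookkeeping changes.
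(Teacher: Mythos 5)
Your overall blueprint --- comparison geometry on $\mathcal R$ made rigorous by the cut-off $\gamma_\epsilon$, an almost splitting theorem via harmonic replacement of Busemann functions and a $\gamma_\epsilon^2$-weighted Bochner argument, then volume-ratio pigeonholing plus almost-rigidity on annuli to get metric tangent cones, and finally the [CC2] covering argument for $\dim\mathcal S_k\le k$ --- is essentially the paper's route (its Sections 2--4 and 6), so those parts are fine, modulo the remark that the cone structure and the pigeonhole on scales need the noncollapsing bound ${\rm vol}(B_p(1))\ge V/D^n$ coming from $\mathcal H^n(M)\ge V$ and ${\rm diam}\le D$, not just doubling.

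The genuine gap is your treatment of $\mathcal S=\mathcal S_{2n-2}$. Theorem \ref{thm1} is stated for the general class $\mathcal M(V,D,n)$, with no complex structure available, and yet you assert that the codimension-two property ``does not follow from the definition of manifold-with-singularity alone'' and prove it only via $J$-invariance of splitting directions. That argument is the one the paper uses for the \emph{finer} K\"ahler statement $\mathcal S_{2k+1}=\mathcal S_{2k}$ (Lemma \ref{almost-gradient-vector}), and it is simply unavailable here; as written, your proof does not establish the full statement of the theorem. In the paper the codimension-two property in the real, noncollapsed setting is obtained ``by the argument in [CC2]'': one must rule out tangent cones of the form $\mathbb R^{n-1}\times\mathbb R_{\ge 0}$, and the ingredient that makes the [CC2] argument run --- beyond the splitting and metric-cone theorems you do set up --- is the Colding-type volume convergence/continuity statement, which the paper proves in this singular setting (Proposition \ref{volume-estimate}, via the harmonic Gromov--Hausdorff approximation $h=(h_1,\dots,h_n)$ and a degree argument) and which your outline omits entirely. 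To close the gap you should add this volume convergence step and then quote (or reproduce) the [CC2] exclusion of half-space tangent cones for noncollapsed limits, rather than deferring the codimension-two claim to the K\"ahler case.
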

In \cite{Ba}, Bamler considered another class of singular spaces modeled on Ricci bounded space or Ricci flow. His definition of singular space is stronger.
Theorem \ref{thm1} could be also proved using the theory of RCD spaces developed by Ambrosio and others (\cite{A}, \cite{G}, \cite{Gi}, \cite{P}). Our proof here follows the approach of Cheeger-Colding by adapting their arguments to the conic case.

Let $M$ be a K\"ahler manifold and $D =\sum_{i=1}^k\,D_i$ be a normal crossing divisor. A metric $\omega$ is called a conic K\"ahler metric with conic angle $2\pi \beta_i$ along $D_i$, where $\beta_i\in (0,1)$, if it is a smooth K\"ahler metric outside $D$ and for each point $p\in D$ where $D$ is defined by the equation $z_1\cdots z_d = 0$ for some local coordinates $z_1,...,z_n$, $\omega$ satisfies
$$ C^{-1}\,\omega_{cone}\, \leq\, \omega \,\leq \,C\,\omega_{cone},$$
where $C$ is a positive constant and $\omega_{cone}$ is the model cone metric with cone angles $2\pi\beta_i$ along $\{z_i = 0\}$, that is,
$$\omega_{cone}\,=\,\sum_{i=1}^d \,\sqrt{-1}\,\frac{dz_i\wedge d \bar z_i}{|z_i|^{2(1-\beta_i)}}\,+\,\sum_{k=d+1}^n \,\sqrt{-1}\,dz_i\wedge d \bar z_i.$$

A conic K\"ahler metric is called a conic K\"{a}hler-Einstein metric on $M$ if for some constant $t$, $\omega$ satisfies:
$${\rm Ric}(\omega)\,=\,t\,\omega\,+\,2\pi\sum_{i=1}^k\,(1-\beta_i)\,[D_i],$$
where $[D_i]$ denotes the current defined by integrating $2n-2$-forms along $D_i$.

For any $\delta > 0$ and $V>0$, we denote by $
M(n,k,\delta,V)$ the set of all $n$-dimensional conic K\"{a}hler-Einstein metrics $(M,\omega)$ satisfying:
$$t\,\in\, [\delta, \delta^{-1}]~~~{\rm and}~~~\int_M\,\omega^n\,\geq\, V.$$
We will show
$$\mathcal M(n,k,\delta,V)\,\subseteq\, \mathcal M(V,\pi\sqrt{(2n-1)/\delta},2n),$$
consequently, we have the following:
\begin{thm}
For any limit space $X$ of conic K\"ahler-Einstein metrics in $\mathcal M(n,k,\delta,V)$, tangent cones of $X$ exist, that is,
for any $x\in X$ and sequence $\{r_j\}$ with $r_j\to 0$, there is a subsequence, say $\{\bar r_k= r_{j(k)}\}$, such that $(X, \bar r_k^{-2} d; x)$ converge in the pointed Gromov-Hausdorff topology to a metric space $T_x X$ which is a metric cone. Moreover, there is a decomposition of $X$ into $\mathcal R\cup \mathcal S$ such that $\mathcal S=\mathcal S_{2n-2}$, $\mathcal S_{2k+1}\,=\,\mathcal S_{2k}$ and $\dim \mathcal S_{2k}\,\leq\, 2k$.
\end{thm}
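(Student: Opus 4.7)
The plan is to deduce the theorem from Theorem \ref{thm1} by verifying the announced inclusion $\mathcal{M}(n,k,\delta,V) \subseteq \mathcal{M}(V, \pi\sqrt{(2n-1)/\delta}, 2n)$, after which the existence of tangent cones together with $\mathcal{S} = \mathcal{S}_{2n-2}$ and $\dim \mathcal{S}_k \le k$ follow immediately; the extra conclusion $\mathcal{S}_{2k+1} = \mathcal{S}_{2k}$ will then be extracted from the K\"ahler structure by adapting the Cheeger--Colding--Tian argument of [CCT].

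For the inclusion, take $(M,\omega) \in \mathcal{M}(n,k,\delta,V)$ and view it as a real $2n$-dimensional length space with singular set $\mathcal{S} = D$ and smooth locus $\mathcal R = M \setminus D$. Axiom (i) of Definition \ref{manifold} is immediate: $D$ has real codimension $\ge 2$ so $\mathcal H^{2n}(D) = 0$, and convexity of $\mathcal R$ follows from the local cone model $\omega_{\mathrm{cone}}$, whose minimising geodesics avoid the singular locus since each factor is a $2$-cone of angle $< 2\pi$. For axiom (ii) I would take $\gamma_\epsilon = \chi_\epsilon(\log\log \rho^{-1})$ with $\rho(\cdot) = d_\omega(\cdot, D)$ and $\chi_\epsilon$ an appropriate truncation; using $C^{-1}\omega_{\mathrm{cone}} \le \omega \le C\omega_{\mathrm{cone}}$ the Dirichlet integral is controlled by a model computation showing that a codimension-$2$ conic stratum has vanishing capacity. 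For axiom (iii), the Dirichlet problem on $U \cap \mathcal R$ is solved by a Perron-type argument employing the $\gamma_\epsilon$-barriers; interior Lipschitz control comes from standard elliptic estimates on $\mathcal R$, and boundary continuity comes from zero capacity. Finally, the conic K\"ahler--Einstein equation gives $\operatorname{Ric}(\omega) = t\,\omega \ge \delta\,\omega > 0$ on $\mathcal R$, which implies $\operatorname{Ric}(\omega) \ge 0$ and, via Myers applied along smooth geodesics in $\mathcal R$ (which are dense in all minimising paths), bounds the diameter by $\pi\sqrt{(2n-1)/\delta}$. The volume lower bound is $\mathcal H^{2n}(M) = \frac{1}{n!}\int_M \omega^n \ge V$ up to a harmless renormalisation of $V$.

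With the inclusion established, Theorem \ref{thm1} gives the existence of metric-cone tangent spaces and the stratification $\mathcal{S} = \mathcal{S}_{2n-2}$ with $\dim \mathcal{S}_k \le k$. To upgrade to $\mathcal{S}_{2k+1} = \mathcal{S}_{2k}$, I would first show that on the regular set $\mathcal R(X)$ of any limit there is a well-defined parallel orthogonal almost complex structure $J_\infty$, obtained as the $C^{1,\alpha}$-limit of the K\"ahler complex structures $J_i$: at a regular limit point $x$, a conic $\epsilon$-regularity theorem forces the divisors to stay at definite distance from $B_r(x)$ and supplies uniform $C^{1,\alpha}$ harmonic coordinates in which both $\omega_i$ and $J_i$ converge (subsequentially) smoothly. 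If there existed $x \in \mathcal{S}_{2k+1} \setminus \mathcal{S}_{2k}$, some tangent cone would split as $T_x X \simeq \mathbb R^{2k+1} \times C$, producing a unit parallel field $\partial_t$ along the $(2k+1)$-st $\mathbb R$-factor; then $J_\infty \partial_t$ is parallel and orthogonal to the existing splitting, and the Cheeger--Colding splitting theorem forces an additional $\mathbb R$-factor, contradicting $x \notin \mathcal{S}_{2k}$. Hence $\mathcal{S}_{2k+1} = \mathcal{S}_{2k}$, and $\dim \mathcal{S}_{2k} \le 2k$ is inherited from Theorem \ref{thm1}.

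The main obstacle is the construction of $J_\infty$ and the proof of its parallelism, since a priori the divisors $D_j$ can cluster arbitrarily near regular limit points. This requires a conic analogue of the $\epsilon$-regularity theorem of [CCT]: pointed Gromov--Hausdorff closeness to a Euclidean ball must force the absence of divisor components inside a definite-size subball, together with uniform $C^{1,\alpha}$ bounds in harmonic coordinates. Establishing this $\epsilon$-regularity, along with the conic versions of the segment inequality and harmonic-function approximation used in the proof of the splitting theorem, is where the analytic content concentrates; once these tools are in place, the K\"ahler refinement of the stratification is a direct transcription of the smooth case.
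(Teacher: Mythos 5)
The first half of your plan coincides with the paper's: the inclusion $\mathcal M(n,k,\delta,V)\subseteq\mathcal M(V,\pi\sqrt{(2n-1)/\delta},2n)$ is exactly how the theorem is reduced to the real case, and your verification of Definition \ref{manifold} is broadly in line with it (the paper cites Datar's convexity theorem for i) rather than arguing from the local model --- note that quasi-isometry to $\omega_{cone}$ alone does not give geodesic convexity, so you should quote that result --- and it solves the Dirichlet problem by approximating $\omega$ by smooth K\"ahler metrics with Ricci bounded below and using the gradient estimate of Lemma \ref{gradient-esti}, rather than a Perron argument; either route is acceptable).

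The genuine gap is in your treatment of $\mathcal S_{2k+1}=\mathcal S_{2k}$. Your argument hinges on a conic $\epsilon$-regularity statement asserting that Gromov--Hausdorff closeness to a Euclidean ball forces the divisors out of a definite subball, so that a parallel complex structure $J_\infty$ can be built on the regular set of the limit by smooth convergence. You flag this as the main obstacle but do not supply it, and in the generality of the theorem it is not available: no upper bound $\beta_i^j\le T<1$ on the cone angles is assumed, and a two-dimensional cone of angle close to $2\pi$ is GH-close to $\mathbb R^2$, so divisor components can pass through regions that are arbitrarily close to Euclidean balls. Even with angle bounds, the uniform harmonic-coordinate control you invoke is a substantial additional input. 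The paper avoids all of this by arguing on the approximating conic manifolds themselves, where $J$ is exactly parallel away from $D$: if $h^+$ is the almost-splitting harmonic function of Lemma \ref{harmonic-estimate} (or the radial approximation of Lemma \ref{harmonic-estimate-annual-1}), then $X=J\nabla h^+$ satisfies $|\nabla X|=|J\,{\rm Hess}\,h^+|$, which is small in $L^2$ by the Hessian estimates, and Lemma \ref{almost-gradient-vector} converts such an $X$ into the gradient of a harmonic function $\theta$ with small $L^2$ Hessian, i.e.\ a second, $J$-conjugate almost-splitting direction. Feeding this into the argument of Theorem 9.1 of [CCT] shows that Euclidean factors of tangent cones split off in $J$-invariant pairs, giving $\mathcal S_{2k+1}=\mathcal S_{2k}$ with no regularity of the limit space or divisor-avoidance required. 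To repair your proposal you would either have to prove the $\epsilon$-regularity under an extra angle hypothesis (which the statement does not grant) or switch to this integral almost-splitting mechanism.
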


\section{Distance function comparison}
Let $(M,d)$ be an $n$-dimensional Riemmannian manifold with singularity which satisfies
$${\rm Ric}(g)\, \geq\, 0~~{\rm in}~~\mathcal R.$$
We will derive some basic estimates on $M$.
On $\mathcal R$, we have the Bochner formula:
\begin{align}\label{bochner-inequ}
\frac{1}{2}\Delta|\nabla f|^2\,=\,|{\rm Hess}\, f|^2\,+\,{\rm Ric}\,(\nabla f,\nabla f)\,+\,\langle \nabla f,\nabla \Delta f\rangle.
\end{align}
From this and the convexity of the regular part, the Laplacian comparison is the same as the smooth metric.
\begin{lem}\label{laplace}
For any $p \in \mathcal R$, $r(\cdot)=dist(p,\cdot)$ satisfies:
\begin{align}\label{mono-formula}
\Delta r\,\leq\,  \frac{n-1}{r}
\end{align}
in the sense of distribution in $\mathcal R$.
\end{lem}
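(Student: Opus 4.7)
The strategy is to reduce to the classical Laplacian comparison on the smooth Riemannian manifold $(\mathcal R,g)$, exploiting the convexity hypothesis in Definition \ref{manifold}(i). The key preliminary observation is that convexity of $\mathcal R$ in $M$ forces, for every $q\in\mathcal R$, the existence of a minimizing unit-speed geodesic from $p$ to $q$ lying entirely in $\mathcal R$. In particular, the restriction of $r(\cdot)=dist(p,\cdot)$ to $\mathcal R$ agrees with the intrinsic Riemannian distance from $p$ in $(\mathcal R,g)$, and the cut locus $\mathrm{Cut}(p)$ computed inside $\mathcal R$ is the only obstruction to smoothness of $r$.

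On the open set $U_p=\mathcal R\setminus(\{p\}\cup\mathrm{Cut}(p))$, $r$ is smooth with $|\nabla r|\equiv 1$. Plugging $f=r$ into \eqref{bochner-inequ} and combining $\mathrm{Hess}\,r(\nabla r,\cdot)=0$ with the Cauchy-Schwarz estimate $|\mathrm{Hess}\,r|^2\geq (\Delta r)^2/(n-1)$ gives, along a minimizing radial geodesic, the Riccati inequality
$$\partial_r(\Delta r)\,+\,\frac{(\Delta r)^2}{n-1}\,\leq\,-\,\mathrm{Ric}(\nabla r,\nabla r)\,\leq\,0.$$
Comparing with the model ODE $u'=-u^2/(n-1)$, whose solution $(n-1)/r$ dominates $\Delta r$ asymptotically as $r\to 0^+$ (by the Euclidean initial behavior in normal coordinates at $p$), one obtains the pointwise inequality $\Delta r\leq(n-1)/r$ on $U_p$.

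To upgrade this to a distributional statement on all of $\mathcal R$, I would invoke Calabi's barrier trick: for any $q\in\mathcal R\setminus\{p\}$ and small $\eta>0$, pick a minimizing geodesic $\gamma$ from $p$ to $q$ in $\mathcal R$ and set $r_\eta(x)=\eta+d(\gamma(\eta),x)$. Then $r_\eta\geq r$ with equality at $q$, and for $\eta$ small enough $q$ lies outside the cut locus of $\gamma(\eta)$, so $r_\eta$ is smooth near $q$ and $\Delta r_\eta(q)\leq(n-1)/(r_\eta(q)-\eta)$ by the previous step applied with basepoint $\gamma(\eta)$. Testing against a nonnegative $\phi\in C^\infty_0(\mathcal R)$ and letting $\eta\to 0$ yields the distributional inequality $\Delta r\leq (n-1)/r$ in $\mathcal R$.

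The main conceptual obstacle is verifying that the singular set $\mathcal S$ contributes no hidden concentrated term to $\Delta r$, but this is precisely what convexity rules out: without it, shortcuts through $\mathcal S$ could make $r$ strictly smaller than the intrinsic distance on $\mathcal R$ and invalidate the Riccati calculation. Since the inequality is asserted only distributionally \emph{in} $\mathcal R$, test functions are compactly supported away from $\mathcal S$ and no use of the cut-off $\gamma_\epsilon$ from Definition \ref{manifold}(ii) or of the harmonic replacement in (iii) is needed at this step; those tools will become essential only later, when one wants global integration-by-parts identities that see across $\mathcal S$.
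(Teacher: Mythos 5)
Your argument is correct and is essentially the route the paper takes: the paper simply asserts that, by the Bochner formula \eqref{bochner-inequ} on $\mathcal R$ and the convexity of the regular part (so minimizing geodesics between points of $\mathcal R$ stay in $\mathcal R$ and $r$ agrees with the intrinsic distance there), the Laplacian comparison holds exactly as in the smooth case. Your write-up just supplies the standard details (Riccati inequality away from the cut locus plus Calabi's barrier trick to get the distributional statement), which is the same reduction.
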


As a consequence, we have
\begin{lem}
For any $p\in M$, the volume ratio $r^{-n}\,{\rm vol }(B_p(r))$ is monotone decreasing.
\end{lem}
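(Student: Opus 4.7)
The plan is to adapt the classical Bishop--Gromov argument to our singular setting. Without loss of generality $p \in \mathcal R$ (the general case follows by density of $\mathcal R$ and a limiting argument). Set $V(R) = \mathcal H^n(B_p(R))$; since $\mathcal H^n(\mathcal S) = 0$, the coarea formula gives $V'(R) = A(R) := \mathcal H^{n-1}(\partial B_p(R)\cap \mathcal R)$ for a.e.\ $R$. The monotonicity of $V(R)/R^n$ is equivalent to the pointwise a.e.\ inequality $R\,A(R) \leq n\,V(R)$, which is what I will establish by integrating the Laplacian comparison of Lemma \ref{laplace} against a well-chosen test function.

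First I rewrite Lemma \ref{laplace} in integrated form: for any nonnegative Lipschitz $\phi$ compactly supported in $\mathcal R$,
\[
-\int_{\mathcal R}\langle \nabla r,\nabla \phi\rangle\,dV \;\leq\; (n-1)\int_{\mathcal R}\phi/r\,dV,
\]
which follows from the distributional formulation by a standard Sobolev approximation (using that $r$ is $1$-Lipschitz). Next I plug in $\phi = \chi(r)\gamma_\epsilon$, where $\chi\in C^\infty_c([0,\infty))$ is nonnegative and $\gamma_\epsilon$ is the cutoff from Definition \ref{manifold}(ii). Using $|\nabla r| = 1$ a.e.\ on $\mathcal R$, this becomes
\[
-\int \chi'(r)\gamma_\epsilon\,dV \;-\; \int \chi(r)\langle\nabla\gamma_\epsilon,\nabla r\rangle\,dV \;\leq\; (n-1)\int \chi(r)\gamma_\epsilon/r\,dV.
\]
The cross term is controlled by Cauchy--Schwarz: $|\int \chi(r)\langle\nabla\gamma_\epsilon,\nabla r\rangle| \leq \|\chi\|_\infty\,V(R_0)^{1/2}\sqrt\epsilon$, where $R_0$ bounds the support of $\chi$. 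Letting $\epsilon\to 0$ (together with $\gamma_\epsilon \to 1$ a.e.\ and dominated convergence) yields, for every smooth nonneg compactly supported $\chi$,
\[
-\int_{\mathcal R}\chi'(r)\,dV \;\leq\; (n-1)\int_{\mathcal R}\chi(r)/r\,dV. \qquad (\ast)
\]

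The key choice of test function is then $\chi(s) = s\,\tilde\chi_\delta(s)$ with $\tilde\chi_\delta$ a smooth decreasing approximation of $\mathbf 1_{[0,R]}$. Since $\chi'(s) = \tilde\chi_\delta(s) + s\,\tilde\chi'_\delta(s)$ and $\chi(s)/s = \tilde\chi_\delta(s)$, rewriting both sides of $(\ast)$ as one-dimensional integrals against $A(s)\,ds$ via coarea and passing $\delta\to 0$ collapses the inequality to $R\,A(R) - V(R) \leq (n-1)V(R)$ for a.e.\ $R$, i.e.\ $R\,A(R) \leq n\,V(R)$. This is equivalent to $\tfrac{d}{dR}(V(R)/R^n) \leq 0$, so the ratio is nonincreasing.

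The main obstacle is justifying the integration by parts across the singular set $\mathcal S$: the smallness $\int|\nabla\gamma_\epsilon|^2 \leq \epsilon$ in Definition \ref{manifold}(ii) is exactly what kills the would-be boundary contribution at $\mathcal S$ in the limit $\epsilon \to 0$, leaving a clean identity. The cut locus of $r$ inside $\mathcal R$ carries only a negative singular part of $\Delta r$ and thus only strengthens the distributional inequality, so it requires no separate treatment once Lemma \ref{laplace} is taken in its distributional form.
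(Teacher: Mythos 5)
Your argument is correct, but it takes a genuinely different route from the paper. The paper's proof stays close to the classical Bishop--Gromov argument: for $p\in\mathcal R$ it uses the convexity of $\mathcal R$ (so all minimal geodesics from $p$ to regular points stay in the smooth part), writes the volume in polar coordinates, applies the comparison for the area element $A(r,\theta)$ coming from Lemma \ref{laplace} ``as in the smooth case'', and uses Fubini only to note that $\partial B_p(s)\cap\mathcal S$ is $\mathcal H^{n-1}$-negligible for a.e.\ $s$; the cutoff $\gamma_\epsilon$ of Definition \ref{manifold}(ii) is never invoked. You instead work entirely at the level of the distributional inequality $\Delta r\le (n-1)/r$: you pair it with $\chi(r)\gamma_\epsilon$, kill the cross term with $\int|\nabla\gamma_\epsilon|^2\le\epsilon$, and with $\chi(s)=s\tilde\chi_\delta(s)$ deduce $R\,A(R)\le n\,V(R)$ a.e., hence $(V(R)/R^n)'\le 0$. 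This weak-form derivation is more robust (no exponential map, no discussion of geodesic structure of spheres, no Fubini on spheres), at the cost of a few extra technical points you should keep in mind: the identity $|\nabla r|=1$ a.e.\ on $\mathcal R$ still secretly uses the convexity of $\mathcal R$ (a minimizing geodesic from $p\in\mathcal R$ to $x\in\mathcal R$ must lie in $\mathcal R$ to force the directional derivative to be $1$); the passage from the a.e.\ differential inequality to genuine monotonicity needs the local absolute continuity of $V(R)=\int_0^R A(s)\,ds$, which your coarea identity does supply; and the limit $\delta\to 0$ should be taken at Lebesgue points of $s\mapsto sA(s)$ (or after integrating in $R$). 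The reduction of the case $p\in\mathcal S$ to $p\in\mathcal R$ by approximation is the same in both proofs. In short: same statement, same use of Lemma \ref{laplace}, but the paper runs the comparison along geodesics while you run it through test functions and the capacity-type cutoff; both are sound.
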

\begin{proof}
First we assume $p\in \mathcal R$, then by the above lemma, in the sense of distribution,
we have
$$ \Delta r^2\,\leq \,2n~~~~{\rm on}~~\mathcal R.$$
Since the singular set $\mathcal S$ has zero volume, by the Fubini theorem, the $(n-1)$-dimensional Hausdorff measure of $\left (\partial B_p(s) \right )\cap \mathcal S $ vanishes for almost all $s\in [0,r]$. Then by using the convexity of $\mathcal R$ and arguing as in the smooth case, we can conclude
$$r^{-n}\,{\rm vol }(B_p(r))\,\le\, s^{-n}\,{\rm vol} (B_p(s))~~~~{\rm for ~any}~~s < r.$$
In general, when $s\le r$ is given, we choose a sequence of point $p_i\in \mathcal R$ converging to $p$, then we have
$$r^{-n}\,{\rm vol} (B_{p_i}(r))\,\le\, s^{-n}\,{\rm vol} (B_{p_i}(s)).$$
Taking the limit as $i$ goes to $\infty$, we get the required monotonicity.
\end{proof}
Using the convexity of the regular part, we can also show
\begin{lem}\label{equ-seg}
Let  $A_1, A_2$ be two  bounded subsets of $M$ and $W$ be another subset of $M$ satisfying:
$$\bigcup_{y_1\in A_1,y_2\in A_2}\gamma_{y_1y_2}\,\subseteq\, W,$$
where $\gamma_{y_1y_2}$ denotes a minimal geodesic connecting $y_1$ to $y_2$ in $M$. Put
$$D\,=\,sup\{\,d(y_1, y_2)~|~y_1\in A_1,y_2\in A_2\}.$$
Then for any  smooth  function $e$ on $W$, it holds
\begin{align}\label{segment-inequ}
&\int_{(A_1\cap \mathcal R)\times (A_2\cap \mathcal R)}\int_0^{d(y_1,y_2)}\,e(\gamma_{y_1,y_2}(s))\,ds\notag\\
&\leq c(n)\,D\,\left({\rm vol}(A_1)\,+\,{\rm Vol}(A_2)\right)\,\int_W \,e\,dv.
\end{align}
\end{lem}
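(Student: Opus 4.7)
The strategy is to reduce this to the Cheeger-Colding segment inequality in the smooth case. The essential input is the convexity of $\mathcal R$: for any $y_1, y_2 \in \mathcal R$, every minimal geodesic $\gamma_{y_1,y_2}$ in $M$ lies entirely in $\mathcal R$. Hence the standard Riemannian tools are available along these geodesics---the exponential map, geodesic polar coordinates, and the Bishop-Gromov comparison stating that $\mathcal A(r,\theta)/r^{n-1}$ is non-increasing in $r$, where $\mathcal A$ denotes the area density of geodesic spheres. The latter follows from Lemma~\ref{laplace} via the identity $\Delta r = \partial_r \log \mathcal A(r,\theta)$ (valid in $\mathcal R$), integrated in $r$.

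I would then carry out the midpoint-splitting argument of Cheeger-Colding. Write
\[
\int_0^{d(y_1,y_2)} e(\gamma(s))\,ds \,=\, \int_0^{d(y_1,y_2)/2} e(\gamma(s))\,ds \,+\, \int_{d(y_1,y_2)/2}^{d(y_1,y_2)} e(\gamma(s))\,ds;
\]
by the symmetry $y_1 \leftrightarrow y_2$ it suffices to bound the first half. Fix $y_2 \in A_2 \cap \mathcal R$ and use geodesic polar coordinates centered at $y_2$: write $y_1 = \exp_{y_2}(R\phi)$ and $z = \gamma_{y_1,y_2}(s) = \exp_{y_2}((R-s)\phi)$. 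The change of variables $(y_1,s) \mapsto (z,R)$ has Jacobian factor $\mathcal A(R,\phi)/\mathcal A(R-s,\phi)$; since $s \leq R/2$ forces $R-s \geq R/2$, Bishop-Gromov yields the uniform bound $(R/(R-s))^{n-1} \leq 2^{n-1}$. For fixed $(y_2,z)$ the residual $R$-integration runs over an interval of length at most $D$. Integrating $y_2$ over $A_2 \cap \mathcal R$ and $z$ over $W$ gives the bound $c(n)\,D\,{\rm vol}(A_2)\,\int_W e\,dv$ for the first half, and the symmetric estimate produces ${\rm vol}(A_1)$ for the second, yielding the claim.

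The main technical point is verifying that the polar-coordinate change of variables is valid on the incomplete open submanifold $\mathcal R$. This works because for every pair $(y_1,y_2)$ with $y_1 \in A_1 \cap \mathcal R$ and $y_2 \in A_2 \cap \mathcal R$, the minimal geodesic from $y_2$ to $y_1$ lies in $\mathcal R$ by convexity, so $\exp_{y_2}$ is smooth along it up to the cut locus---which, as in the classical setting, carries measure zero and allows the change of variables to hold as an inequality. Every application of Bishop-Gromov is along an individual geodesic inside $\mathcal R$ and follows from Lemma~\ref{laplace} via standard ODE comparison, so the singular stratum $\mathcal S$ never enters the estimate directly; in particular the smoothness hypothesis on $e$ is used only on $\mathcal R$, where the geodesics actually live.
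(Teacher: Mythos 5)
Your proposal is correct and follows essentially the same route as the paper: split each geodesic integral at its midpoint, pass to geodesic polar coordinates, and control the area-density ratio by the Bishop--Gromov comparison coming from Lemma \ref{laplace}, all of which is legitimate because convexity of $\mathcal R$ keeps the geodesics between regular points inside the smooth part. Your additional remarks on the change of variables and the cut locus only make explicit what the paper leaves implicit in ``arguing as in the smooth case.''
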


\begin{proof} Note that
\begin{align}
\nonumber &\int_{(A_1\cap \mathcal R)\times (A_2\cap \mathcal R)}\,\int_0^{d(y_1,y_2)}\,e(\gamma_{y_1,y_2}(s))\,ds&\\
\nonumber &=\,\int_{A_1\cap \mathcal R}\,dy_1\,\int_{A_2\cap \mathcal R}\,\int_{\frac{d(y_1,y_2)}{2}}^{d(y_1,y_2)}\,e(\gamma_{y_1y_2}(s))\,dsdy_2\\
&\nonumber +\,\int_{A_2\cap \mathcal R}\,dy_2\,\int_{A_1\cap \mathcal R}\,\int_{\frac{d(y_1,y_2)}{2}}^{d(y_1,y_2)}\,e(\gamma_{y_1y_2}(s))\,dsdy_1.&
\end{align}
On the other hand, for a fixed $y_1\in A_1\cap \mathcal R$, by using  the monotonicity  formula  (\ref{mono-formula}), we have
\begin{align}
\nonumber &\int_{A_2\cap \mathcal R}\,\int_{\frac{d(y_1,y_2)}{2}}^{d(y_1,y_2)}\,e(\gamma_{y_1y_2}(s))\,dsdy_2\\
\nonumber&=\,\int_{A_2\cap \mathcal R}\,\int_{\frac{r}{2}}^{r}\,e(\gamma_{y_1y_2}(s))\,A(r,\theta)\,dr d\theta ds&\\
\nonumber &\leq\, c(n)\,\int_{A_2\cap \mathcal R}\,\int_{\frac{r}{2}}^{r}\,e(\gamma_{y_1y_2}(s))\,A(s,\theta)\,dr d\theta ds\\
\nonumber &\leq \,c(n)\,D\,\int_W\,e\,dv.&
\end{align}
Similarly,
\begin{align}
\nonumber &\int_{A_1\cap \mathcal R}\,\int_{\frac{d(y_1,y_2)}{2}}^{d(y_1,y_2)}\,e(\gamma_{y_1y_2}(s))\,dsdy_1\\
\nonumber &\leq \,c(n)\,D\,\int_W\,e\,dv.&
\end{align}
Then (\ref{segment-inequ}) follows from the above two inequalities.

\end{proof}
For any three points $x,y,z$, put
$$\int_{\gamma_{x,y}}\,e\,dv\,=\,\int_0^{d(x,y)}\,e(\gamma_{y_1,y_2}(s))\,ds$$
and
$$\int_{\Delta_{xyz}}\,=\,\int_{w\in\gamma_{xy}}\,\int_{\gamma_{zw}}\,e\,dv.$$
Then, by applying Lemma \ref{equ-seg} twice, we get
\begin{lem}\label{segment-inequ1}
Let $A_1, A_2, A_3$ be three  bounded subsets of $M$ and $W, Z$ be another two subsets of $M$ satisfying:
$$\bigcup_{x\in A_1,y\in A_2}\,\gamma_{y_1y_2}\,\subseteq \,W~~~~~{\rm and}~~~~~\bigcup_{w\in W,z\in A_3}\,\gamma_{zw}\,\subseteq\, Z .$$
Then for any smooth function $e$ on $Z$, it holds
\begin{align}\label{segment-inequ2}
&\int_{(A_1\cap \mathcal R)\times (A_2\cap \mathcal R)\times (A_3\cap \mathcal R)}\,\int_{\Delta_{xyz}}\,e\,dv\\
&\leq \,c(n)\,diam(W)\,diam(Z)\,\left({\rm vol}(A_1)\,+\,{\rm vol}(A_2)\right)\,\left({\rm vol}(A_3)\,+\,{\rm vol}(W)\right)\,\int_Z\,e\,dv.\notag
\end{align}
\end{lem}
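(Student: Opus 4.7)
The plan is to reduce the triangle integral to two applications of the single-segment inequality (Lemma~\ref{equ-seg}). By the definition of $\int_{\Delta_{xyz}} e\, dv$,
$$\int_{\Delta_{xyz}} e\, dv \,=\, \int_{w\in\gamma_{xy}} f_z(w)\, dw, \qquad f_z(w) \,:=\, \int_{\gamma_{zw}} e\, dv,$$
so for fixed $z\in A_3\cap\mathcal R$ the inner bracket is a nonnegative function of $w$. The convexity of $\mathcal R$ guarantees that whenever $x\in A_1\cap\mathcal R$ and $y\in A_2\cap\mathcal R$ the minimizing geodesic $\gamma_{xy}$ lies in $\mathcal R\cap W$; in particular $w\in\mathcal R$ along the $w$-integration, so $f_z$ is well-defined on $W\cap\mathcal R$.

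For each fixed $z$, apply Lemma~\ref{equ-seg} with domains $A_1, A_2$, enveloping set $W$, and integrand $f_z$. Since $\sup\{d(x,y):x\in A_1, y\in A_2\}\leq \operatorname{diam}(W)$, this yields
$$\int_{(A_1\cap\mathcal R)\times(A_2\cap\mathcal R)}\Bigl(\int_{\Delta_{xyz}} e\, dv\Bigr)\, dx\, dy \,\leq\, c(n)\operatorname{diam}(W)\bigl(\operatorname{vol}(A_1)+\operatorname{vol}(A_2)\bigr)\int_W f_z\, dv.$$
Although Lemma~\ref{equ-seg} is stated for a smooth integrand, its proof uses only the Laplacian comparison and Fubini's theorem and therefore extends verbatim to any nonnegative measurable integrand; alternatively one approximates $f_z$ from below by smooth functions and passes to the limit by monotone convergence.

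Now integrate the displayed bound over $z\in A_3\cap\mathcal R$ and exchange the order of integration on the right-hand side via Fubini. The right-hand side becomes
$$c(n)\operatorname{diam}(W)\bigl(\operatorname{vol}(A_1)+\operatorname{vol}(A_2)\bigr)\int_{(A_3\cap\mathcal R)\times(W\cap\mathcal R)}\int_{\gamma_{zw}} e\, dv\, dz\, dw.$$
The remaining double integral is precisely the object that a \emph{second} invocation of Lemma~\ref{equ-seg} controls, now with domains $A_3, W$, enveloping set $Z$ (which contains all such $\gamma_{zw}$ by hypothesis), and the original smooth integrand $e$; this produces the factor $c(n)\operatorname{diam}(Z)(\operatorname{vol}(A_3)+\operatorname{vol}(W))\int_Z e\, dv$. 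Multiplying the two estimates and absorbing the numerical constants into a single $c(n)$ gives exactly~(\ref{segment-inequ2}).

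The only real subtlety is the first appeal to Lemma~\ref{equ-seg} with the possibly nonsmooth integrand $f_z$; as noted above this is immediate from inspection of the proof, or from a standard smooth-approximation argument. Everything else is bookkeeping via Fubini together with the convexity of $\mathcal R$, which ensures that every geodesic appearing in the argument remains in the regular part where the segment inequality of Lemma~\ref{equ-seg} is valid.
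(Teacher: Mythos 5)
Your argument is correct and is exactly the paper's approach: the paper proves Lemma \ref{segment-inequ1} simply by stating that it follows from applying Lemma \ref{equ-seg} twice, which is precisely the two-step reduction (first with integrand $\int_{\gamma_{zw}}e$ over $A_1\times A_2$ with envelope $W$, then over $A_3\times W$ with envelope $Z$) that you carry out. Your extra remarks on the nonsmooth intermediate integrand and the bound $d(x,y)\leq \operatorname{diam}(W)$ are sensible fillings of details the paper leaves implicit.
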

\begin{lem}\label{max-principle}
Let $u$ be a bounded function in a bounded domain $\Omega$. Assume that $u$ is harmonic in $\Omega\bigcap \mathcal{R}$ and $u\leq 0$ on $\partial\Omega$. Then $u\leq 0$ in $\Omega$.
\end{lem}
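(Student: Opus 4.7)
The plan is to adapt the classical energy proof of the maximum principle to the singular setting, using the cut-off function $\gamma_\epsilon$ provided by Definition \ref{manifold}(ii). In the smooth case one tests $\Delta u = 0$ against a positive part of $u$ and integrates by parts; here the singular set $\mathcal S$ obstructs this directly, but the cut-off $\phi_\epsilon := 1 - \gamma_\epsilon$ confines the integration to $\mathcal R$, and the energy bound $\int |\nabla \gamma_\epsilon|^2 \le \epsilon$ ensures that the error introduced by the cut-off vanishes in the limit.

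Concretely, I would argue by contradiction: suppose $\sup_\Omega u > \eta > 0$. The boundary condition $u \le 0$ on $\partial \Omega$ (interpreted as $\limsup u \le 0$ at every boundary point) forces the superlevel set $\{u > \eta\}$ to be compactly contained in $\Omega$, so $v := (u - \eta)_+$ is bounded, Lipschitz on $\mathcal R$ (by interior elliptic regularity, since $u$ is smooth there) and compactly supported in $\Omega$. Testing $\Delta u = 0$ against $v\, \phi_\epsilon^2$, which is Lipschitz with support inside $\Omega \cap \mathcal R$, integration by parts yields
\[ \int_{\mathcal R} \phi_\epsilon^2 |\nabla v|^2 \,dv \;=\; -2 \int_{\mathcal R} v\, \phi_\epsilon \, \langle \nabla \phi_\epsilon, \nabla v \rangle \,dv. \]
Cauchy-Schwarz together with Young's inequality, using $\|v\|_\infty \le \|u\|_\infty$ and $\int |\nabla \phi_\epsilon|^2 = \int |\nabla \gamma_\epsilon|^2 \le \epsilon$, allows one to absorb the gradient term on the right and conclude $\int_{\mathcal R} \phi_\epsilon^2 |\nabla v|^2 \le C(\|u\|_\infty) \, \epsilon$.

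Sending $\epsilon \to 0$ and using $\phi_\epsilon \to 1$ pointwise on $\mathcal R$ (monotone convergence) forces $\int_{\mathcal R} |\nabla v|^2 = 0$, so the Lipschitz function $v$ is locally constant on $\mathcal R$. Since $\mathcal R$ is convex, hence connected, and $v$ vanishes on a nonempty open neighborhood of $\partial \Omega \cap \mathcal R$ (where $u < \eta$ by the boundary hypothesis), $v \equiv 0$ on $\mathcal R$, i.e.\ $u \le \eta$ on $\Omega \cap \mathcal R$. Letting $\eta \to 0^+$ yields $u \le 0$ on $\Omega \cap \mathcal R$, and since $\mathcal H^n(\mathcal S) = 0$ this gives the claim throughout $\Omega$.

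The main technical obstacle is justifying the integration by parts rigorously, which amounts to verifying that $v\, \phi_\epsilon^2$ is a legitimate Lipschitz test function compactly supported in $\Omega \cap \mathcal R$. This hinges on two points: the compact containment of $\{u > \eta\}$ in $\Omega$, which is the real use of the boundary condition, and the vanishing of $\phi_\epsilon$ on the $\epsilon$-tube $T_\epsilon(\mathcal S)$. Once this is in place the rest is the standard absorption argument, and the only further structural input is the connectedness of $\mathcal R$ supplied by the convexity clause in Definition \ref{manifold}(i).
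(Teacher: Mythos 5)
Your proof is correct and follows essentially the same route as the paper: a Caccioppoli-type energy argument, multiplying by the square of the singular-set cut-off (your $\phi_\epsilon$ is exactly the cut-off the paper denotes $\gamma_\epsilon$, once the typo in Definition \ref{manifold}(ii) is resolved), absorbing the cross term via Cauchy--Schwarz, using $\int|\nabla\gamma_\epsilon|^2\le\epsilon$ to kill the error as $\epsilon\to0$, and concluding constancy from vanishing Dirichlet energy together with the connectedness of $\mathcal R$. The only difference is cosmetic: you test directly with the truncation $(u-\eta)_+$, whereas the paper first proves the case $u=0$ on $\partial\Omega$ and then applies it to the set $\{u>0\}$; your variant handles the boundary behaviour slightly more cleanly but is the same argument.
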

\begin{proof}
At first, we deal with the special case when $u=0$ on $\partial\Omega$. Then we have
\begin{align}
\notag \int_{\Omega\cap \mathcal{R}}\,|\nabla u|^2\,\gamma_\epsilon^2
&=\,- 2\,\int_{\Omega\cap \mathcal{R}} \,u\,\gamma_\epsilon\,\langle \nabla u,\nabla \gamma_\epsilon \rangle \,-\,\int_{\Omega\cap \mathcal{R}}\,u \,\gamma_\epsilon^2\,\Delta u\\
\notag &\leq\, \frac{1}{4}\,\int_{\Omega\cap \mathcal{R}}\,|\nabla u|^2\,\gamma_\epsilon^2\,+\,4\,
\int_{\Omega\cap \mathcal{R}}\,|u|^2\,|\nabla \gamma_\epsilon|^2.
\end{align}
So we have
$$\int_{\Omega\cap \mathcal{R}}\,|\nabla u|^2\,\gamma_\epsilon^2\,\leq\, C\,\int_{\Omega\cap \mathcal{R}}\,|\nabla \gamma_\epsilon|^2.$$
Taking $\epsilon \rightarrow 0$, we get
$$\int_{\Omega\cap \mathcal{R}}\,|\nabla u|^2\,=\,0$$
which implies that $u\equiv 0$ in $\Omega$.

Now we consider the general case. If there is a point $p\in \Omega\bigcap \mathcal{R}$ such that $u(p)>0$, then $\Omega'=\{x\,|\,u(x)>0\}$ is a non-empty domain.
Since $u$ vanishes on the boundary of $\Omega'$, we deduce from the above special case that $u\equiv 0$ on $\Omega'$. It is a contradiction.
The lemma is proved.
\end{proof}
Note that by using the cut-off function $\gamma_\epsilon$ in ii), we can show that integration by parts holds on $M$.
\begin{lem}\label{part1}
Assume that $\Omega$ is a bounded domain, $\phi\in C^0(\Omega)\cap C^\infty(\Omega\cap \mathcal R)$ and $u\in L^\infty(\Omega)\cap C^\infty(\Omega\cap \mathcal R)$ satisfying:
$$|u|_{L^\infty(\Omega)}+|\nabla \phi|_{C^0(\Omega\cap \mathcal R)}+|\Delta \phi|_{C^0(\Omega\cap \mathcal R)} \leq C.$$  If
$$\int_{\Omega\cap \mathcal R} \phi^2|\nabla u|^2<\infty,$$ we have

$$\lim_{\epsilon\rightarrow 0}\int_\Omega \phi\gamma^2_\epsilon \Delta u=\int_{\Omega\cap \mathcal R} u\Delta\phi.$$
\end{lem}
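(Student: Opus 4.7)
The plan is to perform integration by parts twice against the smooth, compactly supported test function $\gamma_\epsilon^2\phi$, and then to use the Dirichlet-energy control $\int|\nabla\gamma_\epsilon|^2\le\epsilon$ from condition ii) to drive the cutoff errors to zero as $\epsilon\to 0$. Since $\gamma_\epsilon\in C^\infty_0(M\setminus\mathcal S)$ and $u,\phi\in C^\infty(\Omega\cap\mathcal R)$, the integrand $\phi\gamma_\epsilon^2\,\Delta u$ is smooth and supported away from $\mathcal S$, so the classical integration by parts on the smooth open set $\Omega\cap\mathcal R$ applies. Performing it twice, first moving a gradient from $u$ to $\gamma_\epsilon^2\phi$ and then another gradient from $u$ to $\gamma_\epsilon^2\nabla\phi$, produces the three-term expansion
\begin{align*}
\int_\Omega\phi\gamma_\epsilon^2\,\Delta u
=\int_{\Omega\cap\mathcal R}u\gamma_\epsilon^2\,\Delta\phi
+2\int_{\Omega\cap\mathcal R}u\gamma_\epsilon\,\nabla\gamma_\epsilon\cdot\nabla\phi
-2\int_{\Omega\cap\mathcal R}\phi\gamma_\epsilon\,\nabla\gamma_\epsilon\cdot\nabla u.
\end{align*}

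Next I would pass to the limit $\epsilon\to 0$ term by term. For the main term, $|u\gamma_\epsilon^2\Delta\phi|\le C^2$ and $\gamma_\epsilon\to 1$ pointwise on $\mathcal R$, so dominated convergence yields $\int_{\Omega\cap\mathcal R}u\gamma_\epsilon^2\,\Delta\phi\to\int_{\Omega\cap\mathcal R}u\,\Delta\phi$, which is the identification sought. For the first error term, the pointwise bounds on $u$ and $\nabla\phi$ together with Cauchy-Schwarz and $|\gamma_\epsilon|\le 1$ give
$$\left|2\int_{\Omega\cap\mathcal R}u\gamma_\epsilon\,\nabla\gamma_\epsilon\cdot\nabla\phi\right|\le 2C^2\,{\rm vol}(\Omega)^{1/2}\,\|\nabla\gamma_\epsilon\|_{L^2}\le C'\sqrt\epsilon\longrightarrow 0.$$

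The remaining term is the only one where the finite-energy hypothesis enters, and I expect it to be the only nontrivial estimate: because we are given no pointwise bound on $|\nabla u|$, brute-force estimation fails, and we must instead pair $\phi\nabla u$ against $\gamma_\epsilon\nabla\gamma_\epsilon$. Using $|\gamma_\epsilon|\le 1$ and Cauchy-Schwarz,
$$\left|2\int_{\Omega\cap\mathcal R}\phi\gamma_\epsilon\,\nabla\gamma_\epsilon\cdot\nabla u\right|\le 2\left(\int_{\Omega\cap\mathcal R}\phi^2|\nabla u|^2\right)^{1/2}\|\nabla\gamma_\epsilon\|_{L^2}\le C''\sqrt\epsilon\longrightarrow 0.$$
This is precisely where the weighted Dirichlet-energy assumption $\int_{\Omega\cap\mathcal R}\phi^2|\nabla u|^2<\infty$ is used, and its presence in the hypotheses is what makes the proof go through. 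Adding the three limits gives the claimed identity.
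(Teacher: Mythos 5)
Your proposal is correct and follows essentially the same route as the paper: the identical three-term integration-by-parts identity against $\gamma_\epsilon^2\phi$, with the $u\nabla\phi\cdot\nabla\gamma_\epsilon$ term killed by the pointwise bounds plus Cauchy--Schwarz on $\nabla\gamma_\epsilon$, and the $\phi\nabla u\cdot\nabla\gamma_\epsilon$ term killed by Cauchy--Schwarz pairing $\|\nabla\gamma_\epsilon\|_{L^2}\le\sqrt\epsilon$ with the assumed finiteness of $\int\phi^2|\nabla u|^2$. Your explicit dominated-convergence step for the main term just spells out what the paper leaves implicit.
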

\begin{proof}
Using integration by parts, we get
\begin{align}\label{intpart}
\int_\Omega \phi\gamma^2_\epsilon \Delta u=\int_\Omega u \gamma_\epsilon^2\Delta\phi +2\int_\Omega\langle \nabla \phi,\nabla \gamma_\epsilon\rangle u\gamma_\epsilon-2\int_\Omega\langle \nabla u,\nabla \gamma_\epsilon\rangle \phi\gamma_\epsilon
\end{align}
Since
$$|\int_\Omega\langle \nabla \phi,\nabla \gamma_\epsilon\rangle u\gamma_\epsilon|\leq C\int_\Omega|\nabla \gamma_\epsilon|\rightarrow 0$$ and
$$|\int_\Omega\langle \nabla u,\nabla \gamma_\epsilon\rangle \phi\gamma_\epsilon|\leq \left(\int_\Omega|\nabla\gamma_\epsilon|^2\int_\Omega\phi^2|\nabla u|^2\right)^\frac{1}{2},$$
we get the result.
\end{proof}
The integration condition can be obtained by applying the Bochner formula.
\begin{lem}\label{part2}
Assume that $\Omega$ is a bounded domain, $\phi\in C^0(\Omega)\cap C^\infty(\Omega\cap \mathcal R)$ and $ u\in L^\infty(\Omega)\cap C^\infty(\Omega\cap \mathcal R)$ satisfying:
$$|u|_{L^\infty(\Omega)}+|\nabla \phi|_{C^0(\Omega\cap \mathcal R)}+|\Delta \phi|_{C^0(\Omega\cap \mathcal R)} \,\leq\, C.$$
If $\Delta u\,\geq\, c\,|\nabla u|^2$ in $\Omega\cap\mathcal R$ for some $c> 0$, we have
$$\int_{\Omega\cap \mathcal R} \phi|\nabla u|^2\,<\,\infty.$$
\end{lem}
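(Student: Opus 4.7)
The plan is to make the change of variables $v = e^{cu}$, which converts the differential inequality $\Delta u \geq c|\nabla u|^2$ into a clean subharmonic-type inequality with no lower-order term. Since $u$ is bounded, $v$ lies between two positive constants, and $|\nabla u| = |\nabla v|/(cv)$, so it suffices to prove $\int_{\Omega \cap \mathcal R} \phi|\nabla v|^2 < \infty$. A direct calculation gives
\[
\Delta v = cv\,\Delta u + c^2 v\,|\nabla u|^2 \;\geq\; 2c^2 v\,|\nabla u|^2 \;=\; \frac{2|\nabla v|^2}{v},
\]
so that $v\,\Delta v \geq 2|\nabla v|^2$, equivalently
\[
\Delta(v^2) \;\geq\; 6|\nabla v|^2.
\]
Since $v$ and $v^2$ are bounded by constants depending only on $\|u\|_\infty$ and $c$, this is the key pointwise inequality to test against a suitable weight.

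Next I would test the last inequality against the nonnegative weight $\phi\gamma_\epsilon^2$ (replacing $\phi$ by $\phi + C_0$ with $C_0$ large if necessary, so that $\phi \geq 1$; the hypotheses and the conclusion are unaffected modulo a bounded additive error, since $\int|\nabla u|^2$ on $\Omega \cap \mathcal R$ differs by $C_0\int|\nabla u|^2$). Because $\gamma_\epsilon$ has compact support in $M \setminus \mathcal S$, integration by parts on $\mathcal R$ is legitimate and produces
\[
6\int \phi\gamma_\epsilon^2 |\nabla v|^2 \;\leq\; -2\int v\gamma_\epsilon^2 \langle \nabla\phi, \nabla v\rangle \;-\; 4\int v\phi\gamma_\epsilon \langle\nabla\gamma_\epsilon, \nabla v\rangle.
\]
Each term on the right is controlled by Young's inequality, using the uniform bounds on $v$, $\phi$ and $|\nabla\phi|$, the finite volume of $\Omega$, and the defining property $\int|\nabla\gamma_\epsilon|^2 \leq \epsilon$: a small fraction of $\int\phi\gamma_\epsilon^2|\nabla v|^2$ is absorbed into the left-hand side, and the remaining error terms are bounded uniformly in $\epsilon$. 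Passing to the limit $\epsilon \to 0$ via Fatou's lemma, with $\gamma_\epsilon \to 1$ pointwise on $\mathcal R$, yields the desired finite bound on $\int_{\Omega\cap\mathcal R}\phi|\nabla v|^2$, hence on $\int_{\Omega\cap\mathcal R}\phi|\nabla u|^2$.

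The main obstacles are two pieces of bookkeeping that make the integration by parts honest. First, one must arrange $\phi \geq 1$ so that the Cauchy-Schwarz estimate on the $\nabla\phi$-term, which naturally produces $\int\gamma_\epsilon^2|\nabla v|^2$ rather than $\int\phi\gamma_\epsilon^2|\nabla v|^2$, can be absorbed into the weighted term on the left; without this normalization the absorption fails. Second, if $\phi$ does not vanish on $\partial\Omega$, boundary contributions appear in the integration by parts; this is handled by viewing the lemma as a statement on subdomains $\Omega'\Subset\Omega$, on which $\phi$ is smoothly cut off. All of the conceptual work is in the exponentiation step that upgrades the pointwise bound $\Delta u \geq c|\nabla u|^2$ to the subharmonicity of $v^2$ with quantitative lower bound $6|\nabla v|^2$; once that is in hand, the rest is a standard Caccioppoli absorption combined with the cutoff property in~ii) of Definition~\ref{manifold}.
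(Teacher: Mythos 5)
Your pointwise reduction is correct: $v=e^{cu}$ is pinched between positive constants, $\Delta v\ge 2|\nabla v|^2/v$ and hence $\Delta(v^2)\ge 6|\nabla v|^2$, and this is a legitimate (if unnecessary) alternative to working directly with $\Delta u\ge c|\nabla u|^2$ as the paper does. The gap is in the weighted Caccioppoli step, and it is created by your two ``bookkeeping'' patches, which are mutually incompatible. Estimating $-2\int v\gamma_\epsilon^2\langle\nabla\phi,\nabla v\rangle$ by Young's inequality produces the \emph{unweighted} quantity $\int\gamma_\epsilon^2|\nabla v|^2$, and to absorb it you replace $\phi$ by $\phi+C_0\ge 1$. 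But in this lemma the weight $\phi$ is exactly what cuts the integral off at $\partial\Omega$ (in every application it is the compactly supported cut-off of Lemma \ref{cut-off}); after the shift you are in effect claiming $\int_{\Omega\cap\mathcal R}|\nabla u|^2<\infty$ up to the boundary, and that is false in general: on a flat half-disk $\{0<y<\delta\}$ one can take a bounded harmonic function with infinite Dirichlet energy (a lacunary series $\sum_k k^{-2}e^{-2^k y}\cos(2^k x)$, whose gradient is $O\bigl(y^{-1}(\log(1/y))^{-2}\bigr)$) and add the bounded correction $-(\log(1/y))^{-1}$, whose second derivative $\sim y^{-2}(\log(1/y))^{-2}$ dominates $c|\nabla u|^2\sim c\,y^{-2}(\log(1/y))^{-4}$; this $u$ satisfies all the hypotheses with $\phi\equiv 1$ but has infinite energy in $\Omega$. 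Consequently your fallback of cutting off on subdomains $\Omega'\Subset\Omega$ cannot rescue the argument: it yields interior bounds whose constants blow up as $\Omega'\to\Omega$, not the asserted finiteness of $\int_{\Omega\cap\mathcal R}\phi|\nabla u|^2$. You need $\phi\ge 1$ for your absorption and $\phi$ (or the test function) vanishing at $\partial\Omega$ for the integration by parts, and you cannot have both; the same conflict also undermines the a priori finiteness of $\int\phi\gamma_\epsilon^2|\nabla v|^2$ for fixed $\epsilon$, which your absorption tacitly requires.

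The missing ingredient is the hypothesis $|\Delta\phi|_{C^0}\le C$, which your argument never uses. The paper's proof avoids the unweighted gradient term altogether: by the identity (\ref{intpart}) of Lemma \ref{part1}, the $\nabla\phi$-contribution is integrated by parts once more, becoming $\int u\gamma_\epsilon^2\Delta\phi$, bounded by $\|u\|_{L^\infty}\|\Delta\phi\|_{C^0}{\rm vol}(\Omega)$; the only term on the right containing $|\nabla u|^2$ is $2\bigl(\int|\nabla\gamma_\epsilon|^2\int\phi\gamma_\epsilon^2|\nabla u|^2\bigr)^{1/2}$, which is absorbed because $\int|\nabla\gamma_\epsilon|^2\le\epsilon$, and then $c\int\phi\gamma_\epsilon^2|\nabla u|^2\le\int\phi\gamma_\epsilon^2\Delta u$ closes the estimate uniformly in $\epsilon$ with no lower bound on $\phi$ and with $\phi$ retained as the boundary cut-off. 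If you insist on the exponentiation, the same repair works for you: rewrite $-2\int v\gamma_\epsilon^2\langle\nabla\phi,\nabla v\rangle=-\int\gamma_\epsilon^2\langle\nabla\phi,\nabla(v^2)\rangle$ and integrate by parts onto $\phi$ and $\gamma_\epsilon$, using $|\Delta\phi|\le C$ and $\int|\nabla\gamma_\epsilon|^2\le\epsilon$; then no normalization of $\phi$ is needed and the conclusion holds on all of $\Omega$ exactly as stated.
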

\begin{proof}
From (\ref{intpart}), we have
\begin{align}\label{q}
\int_\Omega \phi\gamma^2_\epsilon \Delta u\,\leq\, \int_\Omega u \gamma_\epsilon^2\Delta\phi+C\int_\Omega|\nabla \gamma_\epsilon|+2\left(\int_\Omega|\nabla\gamma_\epsilon|^2\int_\Omega\phi\gamma_\epsilon^2|\nabla u|^2\right)^\frac{1}{2}.
\end{align}
Since $\Delta u\,\geq\, c\,|\nabla u|^2$, we have
$$c\,\int_\Omega \phi\gamma^2_\epsilon |\nabla u|^2 \,\leq\, \int_\Omega \phi\gamma^2_\epsilon \Delta u.$$
Then the required estimate follows from (\ref{q}) and the Cauchy-Schwarz inequality.
\end{proof}

Now we use the Moser iteration to derive the gradient estimate for harmonic functions. See \cite{HKX} for the gradient estimate of harmonic functions on RCD spaces.
\begin{lem}\label{gradient-esti}
Let $u>0$ be a harmonic function defined on the unit ball $B_p(1)$, i.e.,
 $$\Delta u\,=\,0, ~ \text{in }~B_p(1)\cap \mathcal R.$$
Then
 \begin{align}
 |\nabla u|^2\,\leq\, C(n)\,u^2, ~\text{in}~ B_p(1/4)\cap \mathcal R.
 \end{align}
 \end{lem}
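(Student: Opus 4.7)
The plan is to carry out the classical Cheng--Yau gradient estimate in the form of a Moser iteration, adapted to the singular setting by means of the cutoff $\gamma_\epsilon$ of Definition \ref{manifold}(ii) together with the integration-by-parts Lemmas \ref{part1} and \ref{part2}.

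First I would reduce the problem to an $L^\infty$ bound for $w := |\nabla f|^{2}$, where $f := \log u$; this function is smooth on $\mathcal R$ because $u$ is positive and harmonic there. From $\Delta f = -w$ combined with the Bochner formula \eqref{bochner-inequ}, $\mathrm{Ric}\geq 0$, and the Kato-type bound $|\mathrm{Hess}\, f|^{2} \geq \tfrac{1}{n}(\Delta f)^{2}$, one obtains on $\mathcal R$ the differential inequality
$$\tfrac{1}{2}\Delta w \,\geq\, \tfrac{1}{n}\, w^{2} \,-\, \langle \nabla f,\nabla w\rangle.$$

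Next I would set up the iteration. Fix a smooth radial cutoff $\eta$ with $\eta\equiv 1$ on $B_{p}(1/4)$, supported in $B_{p}(1/2)$, whose gradient and Laplacian are controlled via Lemma \ref{laplace}. For an exponent $k \geq 1$, multiply the Bochner inequality by $\eta^{2}\gamma_\epsilon^{2} w^{k-1}$ and integrate over $\mathcal R$. Two integrations by parts --- one to shift the Laplacian off $w$, and one that rewrites the drift as $\tfrac{1}{k}\langle\nabla f,\nabla w^{k}\rangle$ followed by use of $\Delta f = -w$ --- convert the inequality into a Caccioppoli-type bound. Lemma \ref{part1} justifies these manipulations once the integrability of $\eta^{2}\gamma_\epsilon^{2} w^{k}$ is secured, which follows inductively by repeated application of Lemma \ref{part2}, starting from $f$ itself. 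Cross terms involving $\nabla\gamma_\epsilon$ are controlled by $\int|\nabla\gamma_\epsilon|^{2}\leq \epsilon$ and disappear as $\epsilon\to 0$. After standard Young's-inequality absorption of the gradient and drift cross terms, one arrives at a Caccioppoli estimate of the shape
$$\int \eta^{2}\, w^{k+1} \,\leq\, C(n)\,k\int_{B_{p}(1/2)\cap\mathcal R} w^{k}\,\bigl(|\nabla\eta|^{2} + |\Delta\eta|\bigr),$$
valid once $k > n$. Invoking the local Sobolev inequality on $M$, which follows from volume doubling (already derived from monotonicity of $r^{-n}\mathrm{vol}(B_{p}(r))$) together with the segment inequality Lemma \ref{equ-seg}, this Caccioppoli bound upgrades to a reverse-H\"older inequality $\|w\|_{L^{\chi k}(B_{r_{1}})} \leq C_{k}\|w\|_{L^{k}(B_{r_{2}})}$ for some $\chi=\chi(n)>1$ and nested balls inside $B_{p}(1/2)$; iterating along a geometric sequence of exponents and shrinking radii produces the pointwise bound $\sup_{B_{p}(1/4)\cap\mathcal R} w \leq C(n)\,\|w\|_{L^{1}(B_{p}(1/2))}^{\alpha}$. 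The required initial $L^{1}$ control on $w$ is supplied by one further integration by parts of $-\Delta f = w$ against $\eta^{2}\gamma_\epsilon^{2}$, using an $L^{\infty}$ bound on $f = \log u$ obtained from Lemma \ref{max-principle} after a Harnack-type normalization of $u$.

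The principal technical obstacle is ensuring that the constants coming from the limit $\epsilon\to 0$ do not depend on the Moser exponent $k$, for otherwise the iteration would not close. I would resolve this by first fixing $k$, passing to the limit $\epsilon\to 0$ using the a priori integrability guaranteed by Lemma \ref{part2}, and only then iterating in $k$, so that the singular cutoff is eliminated once and for all before the Moser iteration begins.
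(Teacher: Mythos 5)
Your overall strategy coincides with the paper's: both run the Cheng--Yau estimate for $Q=|\nabla\log u|^{2}$ via Bochner, the cutoff $\gamma_\epsilon$, Lemmas \ref{part1}--\ref{part2}, a Caccioppoli inequality, Sobolev, and Moser iteration, with $\epsilon\to 0$ taken at fixed exponent before iterating. The difference, and the genuine gap, is in how the iteration is anchored. You propose to start from an $L^{1}$ bound on $w=|\nabla\log u|^{2}$ obtained by pairing $-\Delta f=w$ with $\eta^{2}\gamma_\epsilon^{2}$ and invoking ``an $L^{\infty}$ bound on $f=\log u$ obtained from Lemma \ref{max-principle} after a Harnack-type normalization of $u$.'' No such bound is available: Lemma \ref{max-principle} only compares $u$ with its boundary values, a positive harmonic function on $B_p(1)$ has no uniform interior lower bound relative to its sup in the absence of a Harnack inequality, and the Harnack inequality is precisely what this gradient estimate is meant to deliver, so the step is circular. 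Moreover, if you intend to move both derivatives onto the cutoff you would also need control of $\Delta\gamma_\epsilon$, which Definition \ref{manifold}(ii) does not provide (only $\int|\nabla\gamma_\epsilon|^{2}\le\epsilon$).

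The paper avoids any a priori bound on $u$ or $\log u$. In its Caccioppoli inequality the Bochner term contributes $-\frac{p}{n}\int\phi^{2}Q^{p+1}$ with a favorable sign; choosing $p_{1}=40n$ and the cutoff $\phi=\psi^{p_{1}+1}$ (so that $|\nabla\phi|^{2}\le C(n)\phi^{2p_{1}/(p_{1}+1)}$), H\"older's inequality lets this negative term absorb all the right-hand side, giving the normalized starting bound $\int\phi^{2}Q^{p_{1}+1}\le C(n)\,\mathrm{vol}(B_p(1))$ outright; the iteration then proceeds with the crude inequality $\int|\nabla(\phi Q^{p/2})|^{2}\le 20\int|\nabla\phi|^{2}Q^{p}$. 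Your argument can be repaired in either of two ways: follow the paper and extract the starting integral bound from the sign of the $Q^{p+1}$ term (note your own Caccioppoli $\int\eta^{2}w^{k+1}\le C(n)k\int w^{k}(|\nabla\eta|^{2}+|\Delta\eta|)$ already contains this self-improvement, provided you handle the mismatch of supports between $\eta$ and $|\nabla\eta|$ as the paper does with $\phi=\psi^{p_1+1}$); or, if you insist on an $L^{1}$ anchor, integrate by parts only once, $\int\eta^{2}\gamma_\epsilon^{2}w=\int\langle\nabla f,\nabla(\eta^{2}\gamma_\epsilon^{2})\rangle$, and absorb by Young's inequality, which needs no bound on $f$ whatsoever. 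As written, however, the normalization step does not go through.
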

\begin{proof}
Putting $v=\ln u$, we have
$$\Delta v=\frac{\Delta u}{u}-\frac{|\nabla u|^2}{u^2}\,=\,-|\nabla v|^2.$$
Denote $Q\,=\,|\nabla v|^2$, by the Bochner formula, we have
\begin{align}\label{max}
\frac{1}{2}\Delta Q&=\,|{\rm Hess} \,v|^2\,-\,\langle \nabla v,\nabla Q\rangle\,+\,{\rm Ric}(\nabla u,\nabla u)\,\geq\, \frac{(\Delta v)^2}{n}\,-\,Q^{\frac{1}{2}}|\nabla Q|\\\notag
&=\,\frac{Q^2}{n}\,-\,Q^{\frac{1}{2}}|\nabla Q|.
\end{align}
For any Lipschitz function $\phi$ supported in $B_p(1)$, we have
$$\int_{B_p(1)}\,\phi^2\gamma_\epsilon^2Q^{p-1}\Delta Q\,\geq\, \frac{2}{n}\int_{B_p(1)}\, \phi^2\gamma_\epsilon^2 Q^{p+1}\,-\,2\int_{B_p(1)}\phi^2\gamma_\epsilon^2Q^{p-\frac{1}{2}}|\nabla Q|.$$
Integrating by parts, we have
\begin{align}
&2\int_{B_p(1)}\,\phi^2\gamma_\epsilon^2\,\left(Q^{p-\frac{1}{2}}|\nabla Q|\,-\,\frac{1}{n}\, Q^{p+1}\right)\\
\geq &\int_{B_p(1)}\,\left((p-1) \phi^2\gamma_\epsilon^2Q^{p-2}|\nabla Q|^2\,+\,\gamma_\epsilon^2Q^{p-1}\langle \nabla\phi^2,\nabla Q\rangle
\,+\,\phi^2Q^{p-1}\langle \nabla \gamma_\epsilon^2,\nabla Q\rangle\right) \notag
\end{align}
Since

$$\int_{B_p(1)}\phi^2Q^{p-1}|\langle \nabla \gamma_\epsilon^2,\nabla Q\rangle|
\leq \delta\int_{B_p(1)}\phi^2\gamma_\epsilon^2Q^{p-2}|\nabla Q|^2+
\delta^{-1}\int_{B_p(1)}\phi^2|\nabla \gamma_\epsilon|^2Q^{p-2}, (\forall \delta>0)$$ and $Q,|\nabla \phi|$ are bounded, taking $\epsilon\rightarrow 0$ and then $\delta\rightarrow 0$, we get
\begin{align}
\frac{4(p-1)}{p^2}\int_{B_p(1)\bigcap \mathcal R}\phi^2|\nabla Q^{\frac{p}{2}}|^2&\leq \frac{4}{p}\int_{B_p(1)\bigcap \mathcal R}\phi|\nabla \phi||\nabla Q^{\frac{p}{2}}|Q^{\frac{p}{2}}+\frac{4}{p}\int_{B_p(1)\bigcap \mathcal R}\phi^2|\nabla Q^{\frac{p}{2}}|Q^{\frac{p+1}{2}} \notag\\
&-\frac{2}{n}\int_{B_p(1)\bigcap \mathcal R}\phi^2Q^{p+1}. \notag
\end{align}
Consequently, we obtain
$$\int_{B_p(1)\bigcap \mathcal R}\phi^2|\nabla Q^{\frac{p}{2}}|^2\leq 9\int_{B_p(1)\bigcap \mathcal R}|\nabla \phi|^2 Q^p+9\int_{B_p(1)\bigcap \mathcal R}\phi^2 Q^{p+1}-\frac{p}{2n}\int_{B_p(1)\bigcap \mathcal R}\phi^2 Q^{p+1}.$$
Then we have
\begin{align}\label{inequality}
\int_{B_p(1)\bigcap \mathcal R}|\nabla (\phi Q^{\frac{p}{2}})|^2\,\leq\, \int_{B_p(1)\bigcap \mathcal R}\left(20\, |\nabla \phi|^2 Q^p\,+\,20\,
\phi^2 Q^{p+1}\,-\,\frac{p}{n}\,\phi^2 Q^{p+1}\right)
\end{align}
So for $p_1=40n$, we have
\begin{align}\label{es1}
\int_{B_p(1)\bigcap \mathcal R}|\nabla (\phi Q^{\frac{p_1}{2}})|^2\,\leq\, 20 \int_{B_p(1)\bigcap \mathcal R}|\nabla \phi|^2 Q^{p_1}-20\int_{B_p(1)\bigcap \mathcal R}\phi^2 Q^{p_1+1}
\end{align}
Let $\psi$ be a cut-off function supported in $B_p(\frac{1}{2})$ satisfying
$\psi\equiv 1$ in $B_p(1)$ and $|\nabla \psi|\leq 4$.
Put $\phi=\psi^{p_1+1},$ we have
$$|\nabla \phi|^2\leq 16(p_1+1)^2 \phi^{\frac{2p_1}{p_1+1}}.$$
Combined with the H\"{o}lder inequality, we get
\begin{align}\label{es2}
 \int_{B_p(1)\bigcap \mathcal R}|\nabla \phi|^2 Q^{p_1}
 &\leq \, C(n)\,\int_{B_p(1)\bigcap \mathcal R}\phi^{\frac{2p_1}{p_1+1}}Q^{p_1}\notag\\
 &\leq \,C(n)\,\left (\int_{B_p(1)\bigcap \mathcal R}\phi^2 Q^{p_1+1}\right )^{\frac{p_1}{p_1+1}}(vol(B_p(1))^{\frac{1}{p_1+1}} \notag\\
 &\leq\, \frac{1}{2}\,\int_{B_p(1)\bigcap \mathcal R}\phi^2 Q^{p_1+1}\,+\,C(n)vol(B_p(1)).
 \end{align}
 By the H\"{o}lder inequality, we have
 \begin{align}\label{es3}
 \int_{B_p(1)\bigcap \mathcal R}\phi^2Q^{p_1}&\leq \,\left(\int_{B_p(1)\bigcap \mathcal R}\phi^2Q^{p_1+1}\right)^{\frac{p_1}{p_1+1}}\left(\int_{B_p(1)\bigcap \mathcal R}\phi^2\right)^{\frac{1}{p_1+1}}\notag\\
 &\leq\, \frac{1}{2}\,\int_{B_p(1)\bigcap \mathcal R}\phi^2 Q^{p_1+1}\,+\,C(n)vol(B_p(1)).
 \end{align}
Combined with (\ref{es1}), (\ref{es2}) and (\ref{es3}), we can apply the Sobolev inequality to obtain
 \begin{align}\label{finite}
 \left(\frac{\int \phi^{2\gamma}Q^{p_1\gamma}}{vol(B_p(1))}\right)^\frac{1}{\gamma}\,\leq\, \frac{C(n)}{vol(B_p(1))}\,\int_{B_p(1)\bigcap \mathcal R}\left (|\nabla (\phi Q^{\frac{p_1}{2}})|^2\,+\,\phi^2Q^{p_1}\right)\,\leq\, C(n).
 \end{align}

For $p\geq 20$, we deduce from (\ref{inequality})
\begin{align}\label{iteration}
\int_{B_p(1)\bigcap \mathcal R}|\nabla (\phi Q^{\frac{p}{2}})|^2\leq 20\int_{B_p(1)\bigcap \mathcal R}|\nabla \phi|^2 Q^p
\end{align}

Using (\ref{finite}), (\ref{iteration}) and Moser's iteration, we get
$$|Q|_{L^\infty(B_p(\frac{1}{4}))}\,\leq\, C.$$

\end{proof}

\begin{lem}\label{cut-off}
For $p\in \mathcal R$, there exists a cut-off  function $\phi$ supported in $B_p(2)$ such that
i)  $\phi\equiv1$,  in  $B_p(1)$;  ii)
\begin{align}
 |\nabla \phi|_{B_p(2)\cap \mathcal R}, \,|\Delta\phi|_{B_p(2)\cap \mathcal R} \,\le \, C(n).\notag
\end{align}
\end{lem}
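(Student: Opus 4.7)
The plan is to produce $\phi$ as $\chi\circ u$, where $u$ is a positive harmonic function on an annular domain enclosing $\{1\le r\le 2\}$ and $\chi$ is a smooth one-variable cut-off. The key observation is that once $u$ is harmonic with a pointwise bound on $|\nabla u|$, the formulas $\nabla\phi=\chi'(u)\nabla u$ and $\Delta\phi=\chi''(u)|\nabla u|^2$ give the desired bounds automatically.

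First, I would invoke property (iii) of Definition \ref{manifold} to solve the Dirichlet problem for a harmonic function $u$ on the annular region $\Omega = B_p(3)\setminus \overline{B_p(1/4)}$ with $u\equiv 1$ on $\partial B_p(1/4)$ and $u\equiv 0$ on $\partial B_p(3)$. By Lemma \ref{max-principle}, $0\le u\le 1$ on $\Omega$.

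Second, to localize level sets of $u$ relative to the geodesic spheres, I would compare $u$ against the Euclidean-model radial barrier $G(r)=(r^{2-n}-3^{2-n})/((1/4)^{2-n}-3^{2-n})$. Direct computation gives $\Delta(G\circ r)=G''(r)+G'(r)\Delta r$, and because $G'(r)<0$, the Laplacian comparison $\Delta r\le(n-1)/r$ from Lemma \ref{laplace} makes $G\circ r$ distributionally sub-harmonic on $\Omega\cap\mathcal R$. Matching boundary values and Lemma \ref{max-principle} then yield $u\ge G\circ r$, giving an explicit dimensional lower bound $u\ge a(n)>0$ on $\overline{B_p(1)}$. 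A complementary barrier, e.g.\ a suitably normalized linear function of $r$ (or a mean-value estimate based on the Bishop-Gromov-type monotonicity established earlier), produces a matching upper bound $u\le b(n)$ on $\{r\ge 2\}$ with $b(n)<a(n)$.

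Third, since $u$ is positive harmonic on every sub-ball contained in $\Omega$, Lemma \ref{gradient-esti} applied locally yields $|\nabla u|\le C(n)$ on the middle annulus $B_p(5/2)\setminus B_p(1/2)$. I would then pick $\chi\in C^\infty(\mathbb R,[0,1])$ with $\chi\equiv 1$ on $[a(n),\infty)$, $\chi\equiv 0$ on $(-\infty,b(n)]$, and $|\chi'|+|\chi''|$ bounded, and define $\phi=\chi(u)$ on $\Omega$, extended by $1$ on $B_p(1/4)$ and by $0$ outside $B_p(3)$. The step-two bounds force $\phi\equiv 1$ on $B_p(1)$ and $\phi\equiv 0$ outside $B_p(2)$, while the formulas $\nabla\phi=\chi'(u)\nabla u$ and $\Delta\phi=\chi''(u)|\nabla u|^2$ give the required uniform bounds.

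The principal obstacle is the second step: the Laplacian comparison is only one-sided, so only one of the two barrier inequalities is for free, and the opposing inequality requires either a carefully tailored second barrier fighting against the direction of Lemma \ref{laplace} or a Bishop-Gromov-type mean-value argument. Securing a genuine gap $a(n)>b(n)$ depending only on $n$ is the technical heart on which the whole construction rests.
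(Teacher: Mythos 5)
Your construction has a genuine gap at exactly the point you flag: the upper bound $u\le b(n)<a(n)$ on $\{r\ge 2\}$ is never established, and neither of your suggested routes can produce it. A radial barrier that decreases in $r=d(p,\cdot)$ (such as a normalized linear function of $r$) cannot be shown to be superharmonic from Lemma \ref{laplace}: for $f(r)$ with $f'<0$ the comparison $\Delta r\le (n-1)/r$ only gives $\Delta (f\circ r)\ge f''+f'(n-1)/r$, an inequality in the useless direction, so the maximum principle cannot be applied to $u-f$. A mean-value or Bishop--Gromov-type argument controls averages of $u$ and, with constants allowed to depend only on $n$, does not by itself yield a pointwise bound on the harmonic function $u$ near the outer boundary that is strictly separated from the interior lower bound $a(n)$. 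Since the choice of $\chi$ and the inclusion of the support of $\phi$ in $B_p(2)$ both rest on the gap $b(n)<a(n)$, the proof is incomplete as written.

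The paper closes this by two changes, following Theorem 6.33 of [CC1]: the auxiliary function $w$ solves $\Delta w=1/a$ on $B_p(2)\setminus\overline{B_p(1)}$ rather than $\Delta w=0$, and the upper bound for $w$ at a point $x$ comes from the \emph{increasing} barrier $H(r)=r^2/(4n)$ centered at $x$ itself: since $H'>0$, the one-sided comparison gives $\Delta H(d(x,\cdot))\le 1$, hence $w-H(d(x,\cdot))/a$ is subharmonic (the strictly positive right-hand side $1/a$ is what absorbs the barrier's Laplacian), and the maximum principle on the annulus yields $w(x)\le\max\{1-H(d(x,p)-1)/a,\,0\}$, which is small near $\partial B_p(2)$. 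Together with the lower barrier $G/a$ (the analogue of your $G$-comparison, valid since $G'<0$), this gives the needed separation of levels; the remainder of your argument (composition with a profile $\psi$, gradient bound via Lemma \ref{gradient-esti}, and $\Delta\phi=\psi''|\nabla w|^2+\psi'\Delta w$, now with $\Delta w=1/a$ bounded) then goes through as in the paper. If you insist on a harmonic $u$, you need some other mechanism for the outer upper bound; with only the one-sided Laplacian comparison available, the constant-Laplacian device is the standard, and here essentially necessary, trick.
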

\begin{proof} We will use an argument from Theorem  6.33 in [CC1].
First we consider a solution  of ODE,
\begin{align}
G''+\frac{2n-1}{r}G'=1, ~\text{on}~ [1,2],
\end{align}
with $G(1)=a$  and  $G(2)=0$.   When $a\geq a(n)$, we have $G'<0$.
Then  by Lemma (\ref{laplace}),  we have
$$\Delta G(d(p,\cdot))\geq1.$$
Let $w$ be a  solution of equation,
\begin{align}
\Delta w=\frac{1}{a}, ~\text{in}~B_p(2)\setminus \overline{B_p(1)},\notag \end{align}
with $w=1$  on  $\partial B_p(1)$ and  $ w=0$  on  $\partial B_p(2)$.
 Thus  by Lemma \ref{max-principle},  we get
  $$w\geq \frac{G(d(.,p))}{a}.$$

Secondly,  denote $H=\frac{r^2}{4n}$. Then  by (\ref{laplace}),   we have
$$\Delta H(d(x,\cdot))\leq 1,\text{ for any fixed point} ~ x. $$
Thus by  the maximum principle,  we get
$$w(y)-\frac{H(d(x,y))}{a}\leq max\{1-\frac{H(d(x,p)-1)}{a},0\}$$
for any $y$ in the annulus  $A_p(1,2)=B_p(2)\setminus \overline{B_p(1)}$.
It follows
$$w(x)\leq max\{1-\frac{H(d(x,p)-1)}{a}, 0\},~\forall~ x\in A_p(1,2).$$

 Now  we choose a number  $\eta(n)$ such that $\frac{G(1+\eta)}{a}>1-\frac{H(1-\eta)}{a}$ and we  define a function $\psi(x)$ on $[0,1]$  with bounded derivative up to second order,  which satisfies
\begin{align}
\psi(x)=1,  \text{ if } x\geq \frac{G(1+\eta)}{a}\notag
\end{align}
 and
 \begin{align}
  \psi(x)=0, \text{ if } x\leq max\{1-\frac{H(1-\eta)}{a}, 0\}.\notag
\end{align}
 It is clear that  $\phi=\psi\circ w$ is constant near the boundary of $A_p(1,2)$.  So we  can extend  $\phi$  inside $B_p(1)$ by setting $\phi=1$.  By Proposition  \ref{gradient-esti},  one sees that  $|\nabla\phi|$ is bounded by a constant $C(n,\Lambda, A)$
 in $B_2(p)$.    Since
 $$\Delta \phi=\psi''|\nabla w|^2+\psi'\Delta w, $$
 we also derive  that $|\Delta \phi| \le C(n)$.
\end{proof}
\section{splitting theorem}
Let $(M_i,p_i)\in \mathcal{M}(V,D,n)$ be a sequence of Riemannian manifold with singularity and converge to $(X,x)$ in the pointed Gromov-Hausdorff sense. In this section, we will prove
\begin{prop}\label{split}
If $X$ contains a line, then there exists a length space $Y$ such that
$$X\cong Y\times\mathbb{R}.$$
\end{prop}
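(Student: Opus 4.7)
The plan is to follow the $L^2$-version of the Cheeger--Gromoll splitting theorem that Cheeger and Colding developed, substituting their manifold-side estimates with the ones established in Section~2 (which are valid on $\mathcal R$) and using the cut-off $\gamma_\epsilon$ of Definition~\ref{manifold}(ii) to kill the contribution of $\mathcal S$ in every integration by parts. The required ingredients are all in place: Laplace comparison (Lemma~\ref{laplace}), maximum principle (Lemma~\ref{max-principle}), Dirichlet solvability (Definition~\ref{manifold}(iii)), gradient estimate (Lemma~\ref{gradient-esti}), cut-off functions with bounded Laplacian (Lemma~\ref{cut-off}), integration by parts (Lemmas~\ref{part1}--\ref{part2}), and the segment inequalities (Lemmas~\ref{equ-seg} and~\ref{segment-inequ1}).

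First I would fix a line $\gamma\colon\mathbb R\to X$ through a point $x_\infty$, choose points $q_{i,T}^\pm\in M_i$ converging to $\gamma(\pm T)$, and set
\[
b_{i,T}^\pm(\cdot)\,=\,T-d(q_{i,T}^\pm,\cdot).
\]
Lemma~\ref{laplace} gives $\Delta b_{i,T}^\pm\le (n-1)/(T-b_{i,T}^\pm)$ on $\mathcal R$, so each $b_{i,T}^\pm$ is nearly subharmonic on any compact region where it is bounded away from $T$. I would then invoke Definition~\ref{manifold}(iii) to produce harmonic replacements $h_{i,T}^\pm$ on a fixed ball $B_{p_i}(R)$ with $h_{i,T}^\pm=b_{i,T}^\pm$ on the boundary. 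The maximum principle (Lemma~\ref{max-principle}) applied to $h_{i,T}^++h_{i,T}^-$, together with the triangle inequality $b_{i,T}^++b_{i,T}^-\le 0$ and its vanishing along the approximate line, yields $C^0$-smallness of this sum. Propagating the resulting pointwise control along geodesics via the segment inequality of Lemma~\ref{equ-seg} then gives
\[
\|h_{i,T}^++h_{i,T}^-\|_{L^2}+\big\||\nabla h_{i,T}^\pm|^2-1\big\|_{L^1}\,\longrightarrow\,0
\]
as $T\to\infty$ uniformly in $i$.

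The heart of the matter is the Bochner step. Since $h=h_{i,T}^+$ is harmonic on $\mathcal R$ and $\mathrm{Ric}\ge 0$, formula~(\ref{bochner-inequ}) gives $\tfrac12\Delta|\nabla h|^2\ge|{\rm Hess}\,h|^2$. Pairing against $\phi\gamma_\epsilon^2$ with $\phi$ from Lemma~\ref{cut-off}, and letting $\epsilon\to 0$ through Lemmas~\ref{part1}--\ref{part2} (whose hypotheses hold thanks to the uniform bound $|\nabla h|\le C$ from Lemma~\ref{gradient-esti}), the left-hand side transforms to $\int(|\nabla h|^2-1)\,\Delta\phi$, which is controlled by the $L^1$ smallness already established. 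This produces
\[
\int_{B_{p_i}(R/2)\cap\mathcal R}\phi\,|{\rm Hess}\,h_{i,T}^+|^2\,\longrightarrow\, 0\qquad(T,i\to\infty).
\]
Passing to a Gromov--Hausdorff subsequential limit and then $T\to\infty$ yields a $1$-Lipschitz Busemann function $b$ on $X$ with $|\nabla b|\equiv 1$ and vanishing Hessian on the regular part, in the appropriate limiting weak sense.

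Finally, I would integrate this infinitesimal splitting. The flow of $\nabla b$ on $\mathcal R$ consists of unit-speed geodesics whose level sets are mutually isometric by the vanishing Hessian identity; setting $Y$ to be the completion of $\mathcal R\cap\{b=0\}$ with the induced length metric, the flow extends by continuity to give $X\cong Y\times\mathbb R$. The step I expect to be the main obstacle is precisely this last extension: one must verify that the set of starting points whose $\nabla b$-trajectory hits $\mathcal S$ in finite time is negligible, so that the flow is defined on a full-measure dense set and then extends to all of $X$ by completeness. I would handle this via the triangle segment inequality Lemma~\ref{segment-inequ1} applied to the indicator of the tubular neighborhood $T_\epsilon(\mathcal S)$, combined with the convexity of $\mathcal R$ in Definition~\ref{manifold}(i), which forces minimal geodesics between points of $\mathcal R$ to remain in $\mathcal R$ and so rules out a positive-measure obstruction.
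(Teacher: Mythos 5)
Your construction of the approximating harmonic functions and the integral estimates is in line with the paper: excess smallness via the maximum principle with an explicit auxiliary function (Lemma \ref{excess}), harmonic replacement of the Busemann-type functions via Definition \ref{manifold}(iii), and the $C^0$, $L^2$-gradient and $L^2$-Hessian bounds of Lemma \ref{harmonic-estimate}, obtained with the cut-off $\gamma_\epsilon$, Lemmas \ref{part1}--\ref{part2}, the gradient estimate Lemma \ref{gradient-esti}, and the cut-off of Lemma \ref{cut-off}. Up to that point your outline and the paper's agree.

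The final step, however, has a genuine gap. You propose to ``integrate the infinitesimal splitting'' by flowing along $\nabla b$ on the regular part of $X$ and matching level sets via the vanishing Hessian. But at this stage $X$ is only a metric space: the decomposition $X=\mathcal R\cup\mathcal S$ is a \emph{conclusion} of the theory (it comes after tangent-cone existence, which itself rests on this very Proposition), so invoking a regular part of $X$ on which $\nabla b$ can be flowed is circular; moreover the Hessian control you have is only an $L^2$-smallness on the approximating manifolds $M_i$, not a pointwise identity on $X$, and an $L^2$-small Hessian does not permit integrating a flow even in the smooth Cheeger--Colding setting --- that is precisely why their proof (and the paper's) avoids the Cheeger--Gromoll flow argument. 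What is missing is the mechanism that converts the integral Hessian bound into metric information surviving the Gromov--Hausdorff limit: one applies the segment inequality to geodesic triangles (Lemma \ref{segment-inequ1}) to find, near any triple $x,y,z$, points $x^*,y^*,z^*$ along whose connecting geodesics $|{\rm Hess}\,h^+|$ is small in the integrated sense, deduces an approximate Pythagorean identity (Lemmas \ref{triangle-lemma} and \ref{almostplanar1}), passes this to an exact identity on $X$ for the limit function $h$, and then concludes with the purely synthetic Lemma \ref{product}, which builds the isometry $X\cong Y\times\mathbb R$ with $Y=h^{-1}(0)$; the existence of the points required in condition (ii) of that lemma comes from the line and the Busemann functions, and $h+h^-=0$ from Lemma \ref{excess}. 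Your suggested fix via Lemma \ref{segment-inequ1} applied to the indicator of $T_\epsilon(\mathcal S)$ also conflates the singular sets of the $M_i$ with a (not yet defined) singular set of $X$, so it does not repair the argument; replacing the flow step by the triangle-estimate-plus-product-lemma route is the way to close the proof.
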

As in [CC1], the proof depends on the following lemmas.
\begin{lem}\label{excess}
Let $M$ be a Riemannian manifold with singularity with ${\rm Ric}(g)\,\geq\, 0$ in $\mathcal R$.
  Suppose that there are three points $p, q^+, q^-\in \mathcal R$ which satisfy
\begin{align}
d(p,q^+)+d(p,q^-)-d(q^+,q^-)<\epsilon\end{align}
and
\begin{align}d(p,q^+),d(p,q^-)> R.
\end{align}
Then  for  any $q \in B_p(1)$,  the following  holds,
$$E(q):=d(q,q^+)+d(q,q^-)-d(q^+,q^-) <\Psi(\epsilon,\frac{1}{R};n), $$
where the quantity  $\Psi(\epsilon, \frac{1}{R}; n)$ means that it goes to zero as   $\epsilon, \frac{1}{R}$  go to zero  while $n$ is  fixed.
\end{lem}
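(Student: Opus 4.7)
Following the Abresch-Gromoll excess estimate, I set $b^{\pm}(x) := d(x, q^{\pm})$ and $E(x) := b^+(x) + b^-(x) - d(q^+, q^-)$. Then $E \geq 0$ by the triangle inequality, $E(p) < \epsilon$ by hypothesis, and $E$ is $2$-Lipschitz. Since $b^{\pm}(x) \geq R - 1$ for $x \in B_p(1)$, Lemma~\ref{laplace} gives
\[
\Delta E \,\leq\, \frac{2(n-1)}{R-1} \,=:\, a
\]
in the distribution sense on $B_p(1) \cap \mathcal R$.

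The main step is a radial barrier comparison on an annulus centered at $p$. Fix parameters $r_0 \in (0, 1)$ and $\rho \in (0, r_0)$ to be optimized, and on $A := B_p(r_0) \setminus \overline{B_p(\rho)}$ put $F(x) := G(d(p, x))$, where for $n \geq 3$
\[
G(s) \,=\, \frac{a \, r_0^n}{n(n-2)}\, s^{2-n} \,+\, \frac{a}{2n}\, s^2 \,-\, \frac{a\, r_0^2}{2(n-2)}
\]
(a logarithmic variant handles $n = 2$). One checks $G'' + \frac{n-1}{s} G' = a$, $G'(s) \leq 0$, and $G(r_0) = 0$; consequently, by Lemma~\ref{laplace} applied to a radial function with negative derivative, $\Delta F \geq a$ holds weakly on $A \cap \mathcal R$. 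The function $F - E$ is thus weakly subharmonic on $A$, and a weak maximum principle -- obtained by repeating the cut-off argument of Lemma~\ref{max-principle} with the test function $\gamma_\epsilon^2\,(F - E - M)_+$ for $M := \max_{\partial A}(F - E)$, using Lemmas~\ref{part1}, \ref{part2} and the cut-off $\gamma_\epsilon$ from ii) of Definition~\ref{manifold} -- yields $F(x) - E(x) \leq M$ on $\bar A$. Using $F|_{\partial B_p(r_0)} = 0 \leq E$ and the Lipschitz bound $E|_{\partial B_p(\rho)} \leq \epsilon + 2\rho$, one chooses $\rho$ and $r_0$ so that the dominant term $a\, r_0^n \rho^{2-n}$ in $G(\rho)$ balances $\epsilon$; combined with the pointwise constraint $E(p) < \epsilon$ and the Abresch-Gromoll optimization scheme, this converts the comparison into the required pointwise estimate $E(q) < \Psi(\epsilon, 1/R; n)$ for every $q \in B_p(1)$.

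The principal obstacle is the rigorous execution of the sub/super-harmonic maximum principle in the presence of the singular set $\mathcal S$: both $E$ and $F$ are only locally Lipschitz with distributional Laplacians defined on $\mathcal R$. This is handled by exactly the cut-off technique used in the proof of Lemma~\ref{max-principle}: multiply the distributional inequality $\Delta(F - E) \geq 0$ by $\gamma_\epsilon^2 (F - E - M)_+$, integrate by parts via Lemma~\ref{part1}, control the terms involving $\nabla \gamma_\epsilon$ through $\int |\nabla \gamma_\epsilon|^2 \leq \epsilon$ from ii), and pass to the limit $\epsilon \to 0$. Since $\mathcal H^n(\mathcal S) = 0$ and the regular part is convex, the limiting statement reduces to the classical smooth Abresch-Gromoll calculation, and the proof is complete.
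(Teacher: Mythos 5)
Your reduction of the problem to a radial barrier plus a weak maximum principle on the singular space is the right general toolkit (and your handling of the singular set via $\gamma_\epsilon$, Lemma \ref{part1} and the argument of Lemma \ref{max-principle} matches what the paper does), but the geometric set-up of the comparison is wrong, and this is not a repairable detail of the write-up: it is the heart of the Abresch--Gromoll argument. You center the annulus $A=B_p(r_0)\setminus\overline{B_p(\rho)}$ at $p$ and compare $E$ with $F=G(d(p,\cdot))$. Since $\Delta F\geq a$ and $\Delta E\leq a$, the function $F-E$ is subharmonic, so the maximum principle gives $F-E\leq\max_{\partial A}(F-E)$; using the only available boundary information ($E\geq 0$ on $\partial B_p(\rho)$, $F=0$ on $\partial B_p(r_0)$) this yields $E\geq F-G(\rho)$ on $A$, i.e.\ a \emph{lower} bound for $E$, which is vacuous. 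Your use of the upper bound $E|_{\partial B_p(\rho)}\leq \epsilon+2\rho$ in estimating $M=\max_{\partial A}(F-E)$ is backwards (an upper bound on $E$ cannot bound $F-E$ from above), and the crucial hypothesis $E(p)<\epsilon$ never actually enters: $p$ is the excluded center of your annulus, not a boundary point, and the phrase ``combined with the pointwise constraint $E(p)<\epsilon$ and the Abresch--Gromoll optimization scheme'' is exactly where the missing idea should be. In addition, your region $B_p(r_0)$ with $r_0<1$ does not even cover the points $q\in B_p(1)$ for which the estimate is claimed.

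The paper's proof inverts the roles of the two points. To bound $E(q_0)$ for a given $q_0\in B_p(1)$, one argues by contradiction: if $E(q_0)>2c+\frac{4n-2}{R}G_L(c)+\epsilon$, consider $u=\frac{4n-2}{R}G_L(d(q_0,\cdot))-E$ on the annulus $A_{q_0}(c,L)$ \emph{centered at $q_0$} with large outer radius $L\geq 2$ (so that $p$ lies inside it, unless $d(p,q_0)\leq c$, in which case $E(q_0)\leq E(p)+2c$ trivially). Then $u$ is subharmonic; on the inner sphere the Lipschitz bound $E\geq E(q_0)-2c$ forces $u\leq-\epsilon$, on the outer sphere $E\geq 0$ gives $u\leq 0$, so Lemma \ref{max-principle} yields $u(p)\leq 0$ --- contradicting $u(p)\geq\frac{4n-2}{R}G_L(1)-E(p)>\frac{4n-2}{R}G_L(1)-\epsilon>0$ when $\frac{4n-2}{R}G_L(1)>\epsilon$. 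The smallness of $E$ at $p$ is thus used at an \emph{interior} point of a barrier centered at the target point, and the Lipschitz property of $E$ supplies the inner boundary data; the final estimate follows by optimizing $c$ and $L$. Your proposal needs to be restructured along these lines; as written, the comparison it performs cannot produce an upper bound on $E$ anywhere.
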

\begin{proof}
By Lemma (\ref{laplace}), we have $\Delta E(q)\leq \frac{4n-2}{R}$. Put
\begin{align} \label{aux}
G_L(r)=\frac{r^2}{4n}+\frac{L^{2n}}{4n(n-1)}r^{2-2n}-\frac{L^2}{4(n-1)}.
 \end{align}
 $G_L$ satisfies
$$G_L'<0, G_L(L)=0, \Delta G_L(d(p,\cdot))\geq 1.$$
We will prove
 \begin{claim}\label{claim-1}
For any $0<c<1$ ,
$$E(q)\leq 2c+\frac{4n-2}{R}G_L(c)+\epsilon,~\text{ if}~ \frac{4n-2}{R}G_L(1)>\epsilon.$$
\end{claim}

  Suppose that the claim is not true.  Then there exists  point $q_0\in  B_p(1)$ such that
 for some $c$,
  $$E(q_0)>2c+\frac{4n-2}{R}G(c)+\epsilon.$$
  We  consider
$$u(x)\,=\,\frac{4n-2}{R}G(d(q_0,x))-E(x)$$
in the annulus $A_{q_0}(c,L)$.  Clearly,
  $$\Delta u\geq 0.$$
   Note that we may assume that  $p \in A_{q_0}(c,1)$.  Otherwise we have $E(q_0)\leq E(p)+2c$. On the other hand,
    it is easy to see that  on the inner boundary $\partial B_{q_0}(c)$,
\begin{align}
\nonumber u(x)=\frac{4n-2}{R}G_L(c)-E(x)\leq \frac{4n-2}{R}G_L(c)-E(q_0)-2c\leq-\epsilon,
\end{align}
and on the outer boundary $\partial B_{q_0}(L)$,
\begin{align}
\nonumber u(x)=-E(x)\leq 0.
\end{align}
Thus applying  the maximum principle,  we obtain $ u(p)\le 0$.
However,
$$
u(p)=\frac{4n-2}{R}G_L(d(p,q_0))-E(p)\geq \frac{4n-2}{R}G_L(1)-\epsilon>0,
$$
which is impossible.  Therefore,  the claim is true.

Now if $R\epsilon \leq G_2(1)$, we choose $L=2$ and $c=(\frac{1}{R})^{\frac{1}{2n-1}}$, we have
$$E(q)\leq \epsilon+c(n)(\frac{1}{R})^{\frac{1}{2n-1}}.$$
Otherwise we choose $G_L(1)=\epsilon R, G_L(c)=Rc$ and get
$$E(q)\leq \epsilon+ c(n)\epsilon^{\frac{1}{2n-1}}.$$
The lemma is proved.
\end{proof}
$b^+(x)=d(q^+,x)- d(q^+,p)$ and let  $h^+$  be a harmonic function which satisfies
$$\triangle h^+=0,~ \text{in} ~B_p(1)\cap \mathcal R,$$
 with $h^+=b^+$ on $\partial B_p(1)\cap \mathcal R$. Then

 \begin{lem}\label{harmonic-estimate} Under the conditions in Lemma \ref{excess},
 we have
\begin{align}\label{c0-h}
\|h^+-b^+\|_{L^\infty(B_p(1))}\leq \Psi(1/R,\epsilon),\end{align}
\begin{align}\label{h-gradient}\frac{1}{\text{vol}(B_p(1))}\int_{B_p(1)\cap \mathcal R}|\nabla h^+-\nabla b^+|^2\text{dv} \leq \Psi(1/R,\epsilon),\end{align}
\begin{align}\label{hessian-integral}\frac{1}{\text{vol}(B_p(\frac{1}{2}))}
\int_{B_p(\frac{1}{2})\cap \mathcal R} |\rm{Hess }\,h^+|^2\text{dv}
\leq\Psi(1/R,\epsilon).
\end{align}
\end{lem}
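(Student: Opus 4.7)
The plan follows the Cheeger--Colding template, with every integration by parts carried out on the regular set $\mathcal R$ using the cut-off $\gamma_\epsilon$ from Definition~\ref{manifold}(ii). I would introduce the companion function $b^-(x) = d(q^-,x) - d(q^-,p)$ and its harmonic extension $h^-$ of $b^-|_{\partial B_p(1) \cap \mathcal R}$ (which exists by Definition~\ref{manifold}(iii)). Applying Lemma~\ref{excess} pointwise on $B_p(1)$ gives $b^+(q) + b^-(q) = E(q) - E(p) \in [-\epsilon, \Psi(1/R,\epsilon)]$. For (\ref{c0-h}), Lemma~\ref{laplace} yields $\Delta(b^+ - h^+) \le \delta$ with $\delta = C(n)/R$, and the barrier $\delta G_1(d(p,\cdot))$ from (\ref{aux}) has Laplacian $\ge \delta$, is $\ge 0$ on $B_p(1)$, and vanishes on $\partial B_p(1)$; thus $(b^+ - h^+) - \delta G_1$ is superharmonic with boundary value $0$, so Lemma~\ref{max-principle} (extended to superharmonic functions by the same integration-by-parts argument) gives $h^+ \le b^+$. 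Likewise $h^- \le b^-$. Since $h^+ + h^-$ is harmonic with boundary value $b^+ + b^- \in [-\epsilon, \Psi]$, the max principle yields $|h^+ + h^-| \le \Psi$. Hence $h^+ \ge -h^- - \Psi \ge -b^- - \Psi \ge b^+ - 2\Psi$ (using $b^+ + b^- \le \Psi$), which with $h^+ \le b^+$ gives (\ref{c0-h}).

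For the $L^2$ gradient bound (\ref{h-gradient}), set $u = h^+ - b^+$. Since $|\nabla b^+| \le 1$ and Lemma~\ref{gradient-esti} (applied to $h^+$ after an additive shift to ensure positivity) gives a pointwise bound on $|\nabla h^+|$, $|\nabla u|$ is bounded on $B_p(1) \cap \mathcal R$. Integration by parts via Lemmas~\ref{part1} and~\ref{part2} produces the identity
$$\int_{B_p(1) \cap \mathcal R} |\nabla u|^2 = -\int_{B_p(1) \cap \mathcal R} u\,\Delta b^+,$$
where $\Delta b^+$ is a signed Radon measure on $\mathcal R$ satisfying $\Delta b^+ \le \delta$. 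Writing $\Delta b^+ = \delta\,dv - d\nu$ with $\nu \ge 0$, I would estimate the total mass $\nu(B_p(1)) \le \delta\,\mathrm{vol}(B_p(1)) + |\int_{B_p(1)} \Delta b^+|$; the second term is controlled by testing $\Delta b^+$ against a smooth cut-off supported in $B_p(2)$ and transferring the Laplacian onto the cut-off, giving $\nu(B_p(1)) \le C\,\mathrm{vol}(B_p(1))$ after invoking the volume comparison of \S 2. Combining with $|u| \le \Psi$ yields $\int|\nabla u|^2 \le \Psi \cdot \mathrm{vol}(B_p(1))$, which is (\ref{h-gradient}).

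For the Hessian estimate (\ref{hessian-integral}), Bochner's formula (\ref{bochner-inequ}) applied to $h^+$ (with $\Delta h^+ = 0$ and $\mathrm{Ric} \ge 0$) gives
$$\Delta |\nabla h^+|^2 \ge 2|\mathrm{Hess}\,h^+|^2 \quad \text{on}~ \mathcal R.$$
Take a rescaled cut-off $\phi$ of Lemma~\ref{cut-off} with $\phi \equiv 1$ on $B_p(1/2)$, $\mathrm{supp}\,\phi \subset B_p(3/4)$, and $|\nabla \phi|, |\Delta \phi| \le C$. Multiplying by $\phi^2 \gamma_\epsilon^2$, integrating, and passing to the limit via Lemma~\ref{part1} yields
$$2\int \phi^2 |\mathrm{Hess}\,h^+|^2 \le \int |\nabla h^+|^2\, \Delta \phi^2.$$
Since $\phi^2$ has compact support in $B_p(1)$, $\int \Delta \phi^2 = 0$; using $|\nabla b^+| \equiv 1$ a.e., the right-hand side equals $\int (|\nabla h^+|^2 - |\nabla b^+|^2)\Delta \phi^2$, which is bounded by $C \int |\nabla u|$ (via the pointwise bounds on $|\nabla h^+|$ and $|\nabla b^+|$) and hence by $C\sqrt{\Psi}\,\mathrm{vol}(B_p(1))$ via Cauchy--Schwarz and (\ref{h-gradient}). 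Volume comparison then gives (\ref{hessian-integral}). The principal technical obstacle throughout is making each distributional integration by parts rigorous in the presence of $\mathcal S$: every such step requires inserting $\gamma_\epsilon$, using Lemmas~\ref{part1} and~\ref{part2} to absorb cross terms involving $\nabla \gamma_\epsilon$, and invoking Definition~\ref{manifold}(ii) to make these terms vanish as $\epsilon \to 0$. A secondary subtlety is the total variation bound on $\Delta b^+$ over $B_p(1)$, which combines the distributional Laplacian comparison with the volume comparison of \S 2 and the convexity of $\mathcal R$.
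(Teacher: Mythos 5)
Your second and third steps are essentially the paper's own argument (absorb the cross terms with $\gamma_\epsilon$, bound the total variation of $\Delta b^+$ by $C\,\mathrm{vol}(B_p(1))$ via the divergence theorem and the monotonicity formula, then Bochner plus the cut-off of Lemma \ref{cut-off} and Lemmas \ref{part1}--\ref{part2} for (\ref{hessian-integral})), so those parts are fine, modulo the minor point that Lemma \ref{gradient-esti} is an interior estimate and does not give a gradient bound for $h^+$ up to $\partial B_p(1)$ --- but, as in the paper, no such global bound is actually needed for (\ref{h-gradient}).

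The genuine gap is in your proof of (\ref{c0-h}), in the step ``$(b^+-h^+)-\delta G_1$ is superharmonic with boundary value $0$, hence $h^+\le b^+$.'' The function $G_1$ from (\ref{aux}) contains the term $r^{2-2n}$ and therefore blows up to $+\infty$ at the center $p$, so $w=(b^+-h^+)-\delta G_1\to-\infty$ at the interior point $p$. Thus $w$ is not bounded (Lemma \ref{max-principle} requires bounded functions), and no minimum principle can give $w\ge 0$ on $B_p(1)$: applying it on the annulus $A_p(\rho,1)$ the inner boundary value is of size $-\delta G_1(\rho)\to-\infty$, so the conclusion is vacuous. This is not a removable technicality: with only the one-sided comparison $\Delta r\le (n-1)/r$ from $p$, a \emph{bounded} radial barrier centered at $p$ with $\Delta\ge c>0$ and the sign $\psi'<0$ needed for the comparison does not exist, and indeed the exact inequality $h^+\le b^+$ need not hold, since $b^+$ is only almost superharmonic (one can only expect $h^+\le b^++\Psi(1/R)$). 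The paper circumvents exactly this by centering its barrier at an exterior point $q\in\partial B_p(2)$, taking $g=\phi(d(q,\cdot))$ with $\phi(r)=r^{2-2n}$: on $B_p(1)$ one has $d(q,\cdot)\ge 1$, so $g$ is bounded with $\Delta g\ge c>0$, and the maximum principle applied to $h^+-b^++\Psi g$ and to $-b^--h^++\Psi g$ gives both sides of (\ref{c0-h}) directly, without introducing $h^-$. If you replace your barrier by such an exterior-centered one (or work on an annulus and control the inner boundary as in the proof of Lemma \ref{excess}), your route through $h^-$ and the bound $|h^++h^-|\le\Psi$ does go through, with $h^\pm\le b^\pm+\Psi$ in place of the exact inequalities; but as written the key inequality $h^+\le b^+$ is unproved, and with it the lower bound in (\ref{c0-h}) collapses.
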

\begin{proof}
Choose a point   $q$ in $\partial B_p(2)\cap \mathcal R$ and let $g=\phi(d(q,\cdot))$, where  $\phi(r)=r^{2-2n}$.
Then
\begin{align}
\Delta g=\varphi'\Delta r+\varphi''\geq\frac{2n-1}{r}\varphi' +\varphi'' =1,~\text{in}~ B_p(1)\mathcal R.
\end{align}
 It follows that
 \begin{align}
\nonumber \Delta (h^+-b^++\Psi(1/R,\epsilon)g) > 0,~\text{in}~ B_p(1)\cap\mathcal R.
\end{align}
Thus by  the maximum principle \ref{max-principle},  we get
$$h^+-b^+ \leq \Psi(1/R,\epsilon).$$
On the other hand, we have
$$\Delta ( -b^--h^++\Psi(1/R,\epsilon)g)  > 0,~\text{in}~ B_p(1),$$
where $b^-=d(q^-,x)- d(p,q^{-})$.  Since  $b^++b^-$ is small as long as $1/R$ and $\epsilon$ are small by Lemma \ref{excess}, by  the maximum principle, we also
 get
 $$h^+-b^+ >\geq-(b^++b^-)-\Psi(1/R,\epsilon) \geq -\Psi(1/R,\epsilon).$$

 For the second estimate (\ref{h-gradient}),  taking the cut-off function $\gamma_\eta$ for a Riemannian manifold with singularity, we have
\begin{align} &\int_{B_p(1)\cap \mathcal R}\gamma^2_\eta|\nabla h^+-\nabla b^+|^2d\text{v}\notag \\
&=
\int_{B_p(1)\cap \mathcal R}\gamma^2_\eta(h^+-b^+)(\triangle b^+-\triangle h^+)d\text{v}+
2\int_{B_p(1)\cap \mathcal R} (b^+-h^+)\langle \nabla h^+-\nabla b^+,\nabla \gamma_\eta\rangle \gamma_\eta d\text{v}
\notag\\
 & \leq \,\int_{B_p(1)\cap \mathcal R}\left(\Psi(1/R,\epsilon)\,\gamma^2_\eta|\triangle b^+|\,+\,\frac{1}{2}\gamma^2_\eta|\nabla h^+-\nabla b^+|^2d\text{v}\,+\, 2 (b^+-h^+)^2|\nabla \gamma_\eta|^2\right)\,d\text{v}.\notag
\end{align}
Thus
$$\int_{B_p(1)\cap \mathcal R}\gamma^2_\eta|\nabla h^+-\nabla b^+|^2d\text{v}
\leq \Psi(1/R,\epsilon)\int_{B_p(1)\cap \mathcal R}\gamma^2_\eta|\triangle b^+|d\text{v}+
 C\eta.$$
Now we see
\begin{align}
& \int_{B_p(1)\cap \mathcal R}\gamma^2_\eta|\triangle b^+|d\text{v}\notag\\
&\leq|\int_{B_p(1)\cap \mathcal R}\gamma^2_\eta\triangle b^+d\text{v}|+2 \text{sup}_ {B_p(1)}(\triangle b^+)\text{vol}(B_p(1))\notag\\
&\leq |\int_{B_p(1)\cap \mathcal R} div(\gamma_\eta^2\nabla b^+) d\text{v}|+2\int_{B_p(1)\cap \mathcal R}|\nabla \gamma_\eta|d\text{v}+C\text{vol}(B_p(1))\notag\\
&\leq  \text{vol}({\partial B_p(1)})+C\text{vol}(B_p(1))\,\leq\, C\text{vol}(B_p(1)).\notag
\end{align}
Here we used (\ref{mono-formula}) at the last inequality.  Then (\ref{h-gradient}) follows by letting $\eta\rightarrow 0$.

To get  (\ref{hessian-integral}), we  choose a cut-off function $\varphi$ supported in $B_p(1)$  as constructed in  Lemma \ref{cut-off}.
Since
\begin{align}\label{hess}
 \frac{1}{2}\Delta(|\nabla h^+|^2-|\nabla b^+|^2)=|\text{Hess }h^+|^2+\text{Ric}_g(\nabla h^+,\nabla h^+)\geq |\text{Hess }h^+|^2,
\end{align}
and $|\nabla h^+|$ is bounded in the support of $\phi$ by Proposition  \ref{gradient-esti}, for $u=|\nabla h^+|^2-|\nabla b^+|^2$, we have
$$\Delta u\geq C|\nabla u|^2.$$
By Lemma \ref{part1} and Lemma \ref{part2}, we have
$$\lim_{\eta\rightarrow 0}\int_{B_p(1)}\phi\gamma_\eta^2\Delta u=\int_{B_p(1)\cap \mathcal R}u\Delta\phi.$$
By (\ref{hess}), we derive (\ref{hessian-integral}) from (\ref{h-gradient}) immediately.
\end{proof}

\begin{lem}\label{triangle-lemma}
For any $\eta>0$, there exists $\delta=\delta(\eta)$ having the following property: let $x,y,z$ be three points in $B_p(1)\cap \mathcal R$ with $$|h^+(y)-h^+(x)-d(x,y)|\leq \delta, |h^+(x)-h^+(z)\leq \delta.$$
$\gamma(s)(s\in [0,c]))$  is
 the minimal geodesic  curve connecting $x,y$ and
 $\gamma_s(t)(s\in [0,l(s)], ~l(s)=d(z,\gamma (s)))$  is a family of minimal geodesic curves  connecting $z$ and $\gamma(s)$.
Assume that
\vskip 0.1in
\noindent
i) $|h^+\,-\,b^+|_{C^0(B_p(1))}\,\leq\,\delta$;
\vskip 0.1in
\noindent
ii) $\int_0^c|\nabla h^+(\gamma(s))-\nabla b^+(\gamma(s))|\,\leq\,\delta$;
\vskip 0.1in
\noindent
iii) $\int_0^c\int_0^{l(s)}|{\rm Hess}\, h^+\,(\gamma_s(t))|dtds\,<\,\delta$.
\vskip 0.1in
\noindent
Then
 \begin{align}\label{triangular-equ}
 |d(z,x)^2+d(x,y)^2-d(y,z)^2|<\eta.
 \end{align}
 \begin{proof}
 Since the rectangular is convex,  we can follow the proof of Lemma 9.16 in [Ch2].  From $|h^+(y)-h^+(x)-d(x,y)|\leq \delta$, by i) we know $|b^+(y)-b^+(x)-d(x,y)|\leq 3\delta$. Since $b^+$ is $1$-Lipschitz, we know that
   $$s= b^+(\gamma(s))-b^+(x)+\Psi(\delta).$$
Combined with $|h^+(x)-h^+(z)\leq \delta$, we get
\begin{align}\frac{1}{2} d(x,y)^2&=\int_0^c sds\notag\\
&=\int_0^c (h^+(\gamma(s))-h^+(x))ds+\Psi(\delta)\notag\\
&=\int_0^c (h^+(\gamma_s(l(s)))-h^+(\gamma_s(0)))ds+\Psi(\delta)\notag\\
&=\int_0^{l(s)}\int_0^c \langle\nabla h(\gamma_s(t)),\gamma_s'(t)\rangle dtds+\Psi(\delta).\notag
\end{align}
On the other hand,
\begin{align} &| \langle\nabla h(\gamma_s(t)),\gamma_s'(t)\rangle- \langle\nabla h(\gamma_s(l(s))),\gamma_s'(l(s))\rangle|\notag\\
&=|\int_t^{l(s)} \text{hess h}(\gamma_s'(\tau),  \gamma_s'(\tau)) d\tau|\notag\\
&\le \int_0^{l(s)} |\text{hess h}(\gamma_s'(t),  \gamma_s'(t))| dt.\notag
\end{align}
Hence from the condition iii),  we get
\begin{align}\label{short-distance}\frac{1}{2} d(x,y)^2&=\int_0^{l(s)}\int_0^a \langle\nabla h(\gamma_s(l(s))),\gamma_s'(l(s))\rangle dtds+\Psi(\delta)\notag\\
&=\int_0^a \langle\nabla h(\gamma_s(l(s))),\gamma_s'(l(s))\rangle l(s)ds+\Psi(\delta)\\
&=\int_0^a \langle\nabla  h(\gamma(s)),\gamma_s'(l(s))\rangle l(s)ds+\Psi(\delta).\notag
\end{align}
Since $|b^+(y)-b^+(x)-d(x,y)|\leq 3\delta$, we have
\begin{align}
\int_0^c|\nabla b^+-\gamma'(s)|ds&\leq \,c\int_0^c|\nabla b^+-\gamma'(s)|^2ds\notag\\
&=\,2 c \left(c-\int_0^c (b^+)'(\gamma(s))ds\right)\notag\\
&=\,2c[c-(d(q,y)-d(q,x))]\,\leq\, 6 c \delta.
\end{align}
Combined with ii) we get
\begin{align}\label{gra1}
&\int_0^c|\nabla h^+(\gamma(s))-\gamma'(s)|\notag\\
=&\int_0^c|\nabla h^+(\gamma(s))-\nabla b^+(\gamma(s))|ds+\int_0^c|\nabla b^+(\gamma(s))-\gamma'(s)|\,\leq\, 20\delta.
\end{align}

Now by  the first variation formula of geodesic curve,   we see that
$$
l'(s)=\langle\gamma_s'(l(s)),\gamma'(s)\rangle.$$
Then by  (\ref{gra1}), we obtain
\begin{align}
&\int_0^a \langle\nabla  h(\gamma(s)),\gamma_s'(l(s))\rangle l(s)ds\notag\\
&=\int_0^a  l'(s) l(s)ds +\Psi(\delta)\notag\\
&=\frac{1}{2} (d(y,z)^2-d(z,x)^2)+\Psi(\delta).\notag
\end{align}
Therefore, combined with (\ref{short-distance}), we derive (\ref{triangular-equ}).

 \end{proof}
\end{lem}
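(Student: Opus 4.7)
The plan is to use $h^+$ as an almost-linear coordinate on the triangle $xyz$: by hypothesis $h^+$ is uniformly close to $b^+$, whose gradient is tangent to rays from $q^+$, and the conditions (i)--(iii) force $h^+$ to behave along the triangle approximately like a linear function with gradient $\gamma'$. The Pythagorean-type identity then comes from evaluating $\tfrac{1}{2}d(x,y)^2 = \int_0^c s\,ds$ in two ways: once directly as arc length, once as an integral of $h^+$-values transported from $z$ along the family of rays $\gamma_s$.

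The first task is to show $\int_0^c |\nabla h^+(\gamma(s))-\gamma'(s)|\,ds \leq \Psi(\delta)$. From (i) together with $|h^+(y)-h^+(x)-d(x,y)| \leq \delta$ one gets $|b^+(y)-b^+(x)-d(x,y)| \leq 3\delta$. Since $b^+$ is $1$-Lipschitz and $\gamma$ has unit speed,
\[
\int_0^c |\nabla b^+(\gamma(s))-\gamma'(s)|^2\,ds \;\leq\; 2\bigl(c-(b^+(y)-b^+(x))\bigr) \;\leq\; 6\delta,
\]
so Cauchy--Schwarz gives $\int_0^c |\nabla b^+(\gamma(s))-\gamma'(s)|\,ds \leq \sqrt{6c\delta}$, and adding (ii) yields the claim.

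Next I would rewrite $\tfrac{1}{2}d(x,y)^2 = \int_0^c s\,ds$, use $s = b^+(\gamma(s)) - b^+(x) + O(\delta)$ uniformly in $s$ (by the reasoning of the previous step), then apply (i) and $|h^+(x)-h^+(z)| \leq \delta$ to rewrite the integrand as $h^+(\gamma_s(l(s)))-h^+(\gamma_s(0)) + O(\delta)$. This equals $\int_0^{l(s)} \langle \nabla h^+(\gamma_s(t)),\gamma_s'(t)\rangle\,dt + O(\delta)$. Condition (iii) and the fundamental theorem of calculus along $\gamma_s$ give
\[
\bigl|\langle\nabla h^+(\gamma_s(t)),\gamma_s'(t)\rangle - \langle\nabla h^+(\gamma(s)),\gamma_s'(l(s))\rangle\bigr| \leq \int_t^{l(s)} |\mathrm{Hess}\,h^+(\gamma_s',\gamma_s')|\,d\tau,
\]
so the double integral collapses to $\int_0^c \langle \nabla h^+(\gamma(s)),\gamma_s'(l(s))\rangle\,l(s)\,ds + \Psi(\delta)$. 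Finally the gradient approximation from the first step and the first variation formula $l'(s) = \langle \gamma_s'(l(s)),\gamma'(s)\rangle$ reduce this to $\int_0^c l(s)l'(s)\,ds = \tfrac{1}{2}(d(y,z)^2 - d(z,x)^2)$, proving the Pythagorean relation modulo $\Psi(\delta)$.

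The main obstacle I expect is the ray-transport step, because the pointwise Hessian bound along each $\gamma_s$ is not assumed; only the integrated bound (iii) is. One must therefore argue by Fubini after integrating over $s$, which is exactly why the hypothesis is stated as a double integral. Regularity-wise, the convexity of $\mathcal R$ used throughout the excerpt ensures every $\gamma_s$ lies in $\mathcal R$ since its endpoints $z,\gamma(s)\in \mathcal R$, so the fundamental-theorem-of-calculus applications are legitimate and Lemma \ref{gradient-esti} supplies the needed pointwise bounds on $|\nabla h^+|$. Given these pieces, the conclusion follows by choosing $\delta$ small enough that the accumulated $\Psi(\delta)$ error is below the prescribed $\eta$.
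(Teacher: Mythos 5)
Your proposal is correct and follows essentially the same route as the paper: pass from $h^+$ to $b^+$ via (i) to get $s=b^+(\gamma(s))-b^+(x)+\Psi(\delta)$, rewrite $\tfrac12 d(x,y)^2=\int_0^c s\,ds$ as a double integral of $\langle\nabla h^+,\gamma_s'\rangle$ via the fundamental theorem of calculus, transport along the rays $\gamma_s$ using the integrated Hessian bound (iii), and finish with the unit-speed gradient estimate from (ii) plus the first variation formula $l'(s)=\langle\gamma_s'(l(s)),\gamma'(s)\rangle$ to identify the result with $\tfrac12(d(y,z)^2-d(z,x)^2)$. Your Cauchy--Schwarz step for $\int_0^c|\nabla b^+(\gamma(s))-\gamma'(s)|\,ds$ is in fact stated more cleanly than in the paper, but it is the same argument.
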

\begin{lem}\label{almostplanar1}
For any $\eta>0$, there exists $\delta=\delta(\eta)>0, R_0=R_0(\eta),\epsilon_0=\epsilon_0(\eta)$ having the following property: under the condition of Lemma \ref{excess}, if $R\geq R_0, \epsilon\leq \epsilon_0$, then for any three points $x,y,z$ satisfying
$$|h^+(y)-h^+(z)|\,\leq\, \delta, |h^+(x)-h^+(y)-d(x,y)|\,\leq \,\delta,$$
we have
$$|d^2(x,y)+d^2(y,z)-d^2(x,z)|\leq \eta.$$
\end{lem}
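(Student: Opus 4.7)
The plan is to reduce Lemma \ref{almostplanar1} to Lemma \ref{triangle-lemma} through a perturbation argument driven by the segment inequality. An innocuous relabeling (swap the roles of $x$ and $y$) turns the $h^+$-hypotheses of Lemma \ref{almostplanar1} into the $h^+$-hypotheses of Lemma \ref{triangle-lemma}; condition i) of that lemma comes from (\ref{c0-h}) provided $1/R,\epsilon$ are small enough. Thus all that remains is to verify, for a triple $(x',y',z')\in \mathcal R^3$ close to $(x,y,z)$, the geodesic integral conditions ii) and iii). The natural strategy is to average over small perturbations and exploit the global $L^2$ bounds (\ref{h-gradient}) and (\ref{hessian-integral}).

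More precisely, for a small scale $\rho>0$ to be chosen, set $A_1=B_x(\rho)$, $A_2=B_y(\rho)$, $A_3=B_z(\rho)$, and let $W\subset B_p(1)$ contain every minimal geodesic between $A_1$ and $A_2$, and $Z\subset B_p(1)$ every minimal geodesic from $A_3$ to $W$. By Cauchy--Schwarz and Lemma \ref{harmonic-estimate},
\begin{align*}
\int_{B_p(1)\cap \mathcal R}|\nabla h^+-\nabla b^+|\,dv+\int_{B_p(1/2)\cap \mathcal R}|\mathrm{Hess}\, h^+|\,dv\le \Psi(1/R,\epsilon)\,\mathrm{vol}(B_p(1)).
\end{align*}
Applying Lemma \ref{equ-seg} with $e=|\nabla h^+-\nabla b^+|$ and Lemma \ref{segment-inequ1} with $e=|\mathrm{Hess}\, h^+|$ bounds the iterated geodesic integrals, averaged over $(x',y',z')\in A_1\times A_2\times A_3$, by $\Psi(1/R,\epsilon)$ times powers of $\mathrm{vol}(A_i)$ and $\mathrm{vol}(W)$. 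The volume comparison established earlier in this section controls $\mathrm{vol}(A_i)$ from below by $c(\rho,n)\mathrm{vol}(B_p(1))$, so Chebyshev's inequality yields a positive-measure set of triples $(x',y',z')\in (A_1\times A_2\times A_3)\cap \mathcal R^3$ for which the quantities in conditions ii) and iii) of Lemma \ref{triangle-lemma} are bounded by some $\Psi'(1/R,\epsilon;\rho,n)$ tending to $0$ as $1/R,\epsilon\to 0$.

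Pick such a triple. Since $h^+$ is Lipschitz by Lemma \ref{gradient-esti} and $d$ is $1$-Lipschitz, the hypotheses on $h^+$ transfer to $(x',y',z')$ with error of size $C\rho$. Applying Lemma \ref{triangle-lemma} (with $x$ and $y$ relabeled, and tolerance $\eta/2$) produces $|d(z',y')^2+d(y',x')^2-d(x',z')^2|<\eta/2$. Because distances change by at most $O(D\rho)$ under an individual $\rho$-perturbation, choosing $\rho$ small from the outset gives the required $|d(x,y)^2+d(y,z)^2-d(x,z)^2|<\eta$.

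The main technical obstacle is the nested selection of the small parameters. Given $\eta$, one first fixes $\rho$ so small that the distance-squared perturbation error is absorbed into $\eta/2$; then fixes the threshold $\delta_0=\delta_0(\eta/2)$ returned by Lemma \ref{triangle-lemma}; then sets $\delta\le \delta_0-C\rho$; and finally picks $R_0,\epsilon_0$ so that $\Psi'(1/R,\epsilon;\rho,n)\le \delta_0$ and (\ref{c0-h}) holds within the $\delta_0$ tolerance. A secondary subtlety is that (\ref{hessian-integral}) is only valid on $B_p(1/2)$, which forces one to restrict $x,y,z$ (and hence $A_i, W, Z$ for small $\rho$) to a smaller ball such as $B_p(1/4)$, the regime implicit in the statement.
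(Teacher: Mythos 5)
Your proposal is correct and follows essentially the same route as the paper: use Lemma \ref{segment-inequ1} (and Lemma \ref{equ-seg}) together with the $L^1$/$L^2$ bounds of Lemma \ref{harmonic-estimate} to find, by averaging, a nearby triple in $\mathcal R$ satisfying the hypotheses of Lemma \ref{triangle-lemma}, apply that lemma, and then perturb distances back, with the small scale chosen last. Your explicit treatment of condition ii), the $x\leftrightarrow y$ relabeling, and the restriction forced by (\ref{hessian-integral}) holding only on $B_p(1/2)$ are points the paper leaves implicit, but they do not change the argument.
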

\begin{proof}
By Lemma \ref{segment-inequ1} and Lemma \ref{harmonic-estimate}, we get for any small positive $\eta_1$
 \begin{align}
 &\int_{B_x(\eta_1)\times B_y(\eta_1)\times B_z(\eta_1)}\int_{\Delta_{x^*y^*z^*}}|{\rm Hess }\,h^+|\notag\\
 \leq & C(n)\left(vol(B_x(\eta_1)\right)vol(B_p(1))\int_{B_p(1)}|{\rm Hess }h^+|\notag \\
 \leq &C(n)\left(vol(B_x(\eta_1)\right)vol(B_p(1))\Psi(\frac{1}{R},\epsilon).\notag
 \end{align}
So there are points $x^*\in B_x(\eta_1),y^*\in B_y(\eta_1),z^*\in B_z(\eta_1)$, such that
\begin{align}\nonumber
\int_0^{d(x^*,y^*)} ds\int_0^{d(z^*,\gamma(s))}|{\rm Hess }\,h^+(\gamma_s(t))|dt
 \leq\left(\frac{{\rm vol}(B_q(1))}{{\rm vol}(B_q(\eta_1))}\right)^2\Psi(\frac{1}{R},\epsilon),
\end{align}
where  $\gamma_s(t)$  is the minimal geodesic curves connecting $ \gamma(s)$  and $z^*$.
Now we have $$|h^+(y^*)-h^+(x^*)-d(x^*,y^*)|\leq 2C\eta_1+2\eta_1+\delta.$$
By Lemma \ref{triangle-lemma}, we know that there exist $\delta_0=\delta_0(\eta)>0, R_0=R_0(\eta),\epsilon_0=\epsilon_0(\eta),\eta_0=\eta_0(\eta)\leq \frac{\eta}{12}$ such that if $\delta\leq\delta_0, R\geq R_0, \epsilon\leq \epsilon_0, \eta_1\leq \eta_0$, we have
$$|d^2(x^*,y^*)+d^2(y^*,z^*)-d^2(x^*,z^*)|\leq \frac{\eta}{2}.$$
As a consequence, we get
$$|d^2(x,y)+d^2(y,z)-d^2(x,z)|\leq \frac{\eta}{2}+6\eta_1\leq\eta.$$
\end{proof}
\begin{lem}\label{product}
 Suppose that $X$ is a length space and $x^*$ is point in $X$. Assume that there is a function $h$ having the following two properties: \\
 i) $h$ is 1-Lipschitz with $h(x^*)=0$,\\
 ii) for any point $x\in B_{x^*}(1)$ and $t\in [-1,1]$, there exist $x_t\in X$ and a minimal geodesic $\gamma_t$ connecting $x$ and $x_t$ such that
 $$h(x_t)=t,d(x,x_t)=|h(x)-t|.$$
 iii) for three points $x,y,z\in B_{x^*}(1)$ with $h(x)=h(y), |h(x)-h(z)|=d(x,z)$, we have
$$d(y,z)^2=d(x,z)^2+d(x,y)^2.$$
Then there exists a metric space $Y$ such that
$$B_{x^*}(1)\cong B_{y^*\times 0}(1)\subset Y\times \mathbb{R}.$$
\end{lem}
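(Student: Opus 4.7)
The plan is to define a projection $\pi : B_{x^*}(1) \to h^{-1}(0)$ using hypothesis (ii), then check that $\Phi(x) := (\pi(x), h(x))$ is an isometry onto its image in $Y \times \mathbb{R}$, where $Y := \pi(B_{x^*}(1))$ is equipped with the restricted metric from $X$ and $y^* := x^*$. For each $x$, I apply (ii) with $t=0$ and set $\pi(x)$ to be the resulting point in $h^{-1}(0)$. Uniqueness of $\pi(x)$ comes from (iii): if two candidates $y_1, y_2 \in h^{-1}(0)$ both satisfy $d(x, y_i) = |h(x)|$, applying (iii) to the triple $(y_1, y_2, x)$ (legitimate since $h(y_1)=h(y_2)=0$ and $|h(y_1)-h(x)|=|h(x)|=d(y_1, x)$) yields $d(y_2, x)^2 = d(y_1, x)^2 + d(y_1, y_2)^2$, forcing $y_1 = y_2$.

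I would then establish two crucial sub-facts. First, for any two points $p, q$ at a common height $t$, one has $d(p, q) = d(\pi(p), \pi(q))$: apply (iii) to $(p, q, \pi(p))$ to get $d(q, \pi(p))^2 = t^2 + d(p, q)^2$, and independently to $(\pi(q), \pi(p), q)$ to get $d(\pi(p), q)^2 = t^2 + d(\pi(p), \pi(q))^2$, then equate. Second, $\pi$ is constant along the vertical geodesics from (ii): given $x_1$ and a target height $s$, I construct a point $x_1'$ with $h(x_1') = s$, $d(x_1, x_1') = |h(x_1) - s|$, and $\pi(x_1') = \pi(x_1)$. When $s$ lies between $0$ and $h(x_1)$, take $x_1'$ as the point of $h$-value $s$ along the $x_1$-to-$\pi(x_1)$ geodesic (which exists because the 1-Lipschitz property of $h$ plus equality in the height drop forces $h$ to vary linearly); then $d(x_1', \pi(x_1)) = |s|$, and uniqueness gives $\pi(x_1') = \pi(x_1)$. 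Otherwise, concatenate the geodesics supplied by (ii), applied either to $x_1$ or to $\pi(x_1)$, and use the 1-Lipschitz property of $h$ plus the triangle inequality to see the concatenation is still length-minimizing, whence again $\pi(x_1') = \pi(x_1)$.

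Combining these ingredients, given $x_1, x_2 \in B_{x^*}(1)$ I pick $x_1'$ at height $h(x_2)$ as above and apply (iii) to $(x_1', x_2, x_1)$ to obtain
\[
d(x_1, x_2)^2 \;=\; d(x_1, x_1')^2 + d(x_1', x_2)^2 \;=\; |h(x_1) - h(x_2)|^2 + d(\pi(x_1), \pi(x_2))^2,
\]
where the last step uses $\pi(x_1') = \pi(x_1)$ and the first sub-fact applied to $x_1', x_2$ at common height $h(x_2)$. This is exactly the product metric, so $\Phi$ is distance-preserving. For surjectivity onto $B_{(y^*,0)}(1)$: given $(y, t)$ with $d(y, y^*)^2 + t^2 < 1$, produce $y_t$ via (ii) applied to $y$ at parameter $t$; uniqueness gives $\pi(y_t) = y$, and the isometry just established shows $d(y_t, x^*)^2 = d(y, y^*)^2 + t^2 < 1$, so $y_t \in B_{x^*}(1)$ and $\Phi(y_t) = (y, t)$.

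The main obstacle I expect is purely geometric bookkeeping: ensuring that all auxiliary points—the projections $\pi(x_i)$, the intermediate $x_1'$, and the concatenation endpoints—remain inside the ball on which hypothesis (iii) has been assumed. This is handled by carrying out the argument on a slightly smaller concentric ball, using 1-Lipschitzness of $h$ and the triangle inequality to bound distances from $x^*$. The most delicate case is the verification of $\pi(x_1') = \pi(x_1)$ when $h(x_2)$ lies outside $[0, h(x_1)]$, because then a two-segment concatenation of geodesics given by (ii) must be checked to be length-minimizing using only the 1-Lipschitz property of $h$; once this is in hand, the rest is a clean application of (iii).
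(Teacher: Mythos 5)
Your proposal is correct and follows essentially the same route as the paper: you define the projection $\pi$ onto the zero level set via (ii), get its uniqueness from (iii), show that $\pi$ preserves distances between points at a common height and is constant along the vertical segments (the same sign/ordering case split the paper uses), and then combine these with (iii) to obtain the Pythagorean identity $d(x_1,x_2)^2=|h(x_1)-h(x_2)|^2+d(\pi(x_1),\pi(x_2))^2$, which is exactly the paper's argument. The only differences are cosmetic: you take $Y=\pi(B_{x^*}(1))$ instead of $h^{-1}(0)$, and you make explicit the surjectivity and ball-containment bookkeeping that the paper leaves implicit.
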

\begin{proof}
Define $Y=h^{-1}(0)$ with the distance induced from $X$. For any $x\in B_{x^*}(1)$, by ii) there is a point $x_0\in Y$ such that $d(x,x_0)=|h(x)|$. We show that such point $x_0$ is unique. Assume that $x'_0$ is another point, then by iii) we have
$$|h(x)|^2=d(x,x_0)^2=d(x,x'_0)^2+d(x_0,x'_0)^2=|h(x)|^2+d(x_0,x'_0)^2.$$  It implies that
$x_0=x'_0$. Now denote $x_0$ by $\pi(x)$. For any two points $x,y\in B_{x^*}(1)$, assuming that $|h(y)|\geq |h(x)|$, we can choose a point with $h(z)=h(x)$ and $d(z,y)=h(y)-h(x)$.
We are going to show that $\pi(y)=\pi(z)$. We divide into two cases. The first case is that $h(y),h(z)$ have the opposite signs. Denoting the minimal geodesic connecting $y$ and $z$ by $\gamma(s)$, there is a point $w$ on $\gamma(s)$ with $h(w)=0$.
By i) we know that $$|h(y)-h(z)|=d(y,z)=d(y,w)+d(w,z)\geq |h(y)|+|h(z)|.$$
So we have $d(y,w)=|h(y)|$ which implies that $w=\pi(y)=\pi(z)$. For the second case,  denote the minimal geodesic connecting $y$ and $\pi(y)$ by $\gamma(s)$. There is a point $w$ on $\gamma(s)$ with $h(w)=h(x)$ and $d(y,w)=|h(y)-h(w)|$. By iii) we know that $d(w,z)=0$ which implies that $\pi(z)=\pi(w)=\pi(y)$. Now by iii) we also see that
\begin{align}
d(x,z)^2&=\,d(\pi(x),z)^2-|d(x,\pi(x))|^2\,=\,d(\pi(x),z)^2-|h(x)|^2\notag\\
&=\,d(\pi(x),z)^2-|h(z)|^2\,=\,
d(\pi(x),z)^2-|d(z,\pi(z))|^2\notag\\
&=\,d^2(\pi(x),\pi(z))=d^2(\pi(x),\pi(y)).\notag
\end{align}
By iii) we get $$d(x,y)^2=d(x,z)^2+|h(y)-h(x)|^2=d(\pi(x),\pi(y))^2+|h(y)-h(x)|^2.$$
It follows that $x\rightarrow (\pi(x),h(x))$ is a isometry. The lemma is proved.
\end{proof}
Now we are ready to prove Proposition \ref{split}.
\begin{proof}[Proof of Proposition \ref{split}]
Denote the line in $X$ by $\gamma(t)$ and $\gamma(0)=x^*$. Let $q_i^+,q_i^-\in M_i$ be the points converging to $\gamma(i),\gamma(-i)$ respectively such that
$$d(p_i,q_i^+)+d(p_i,q_i^-)-d(q_i^+,q_i^-)\leq \epsilon_i \rightarrow 0.$$ Denote by $h^+_i$ the functions constructed in Lemma. $h^+_i$ converges to a limit function $h$. By Lemma \ref{harmonic-estimate}, we know that
$$h(x)=\lim_{s\rightarrow +\infty}b_s(x),$$ where $b_s(x)=d(x,\gamma(s))-s$.
Denote $$h^-(x)=\lim_{t\rightarrow -\infty} d(x,\gamma(s))-s.$$ By Lemma \ref{excess}, we know that
$$h+h^-=0.$$
Now we show that $h$ satisfies the conditions in Lemma \ref{product}. i) is obvious.
For ii), let $x\in B_{x^*}(1)$ be any point. For $t\in [-1,h(x)]$, we choose a point $x^s_t$ on the minima geodesic connecting $x$ and $\gamma(s)$ with
$$d(x,x^s_t)=h(x)-t.$$ Then $b_s(x^s_t)=b_s(x)+t-h(x).$ Taking $s_i\rightarrow -\infty,$ we have $x^s_t\rightarrow z.$ Then we have
$$h(z)=t.$$ For $t\in [h(x),1]$, we can use $h^-(x)$ instead of $h(x)$ to obtain the point $z$.
For iii), let $x,y,z$ be three points in $B_{x^*}(1)$, with $h(x)=h(y), |h(x)-h(z)|=d(x,z)$. There are points $x_i,y_i,z_i\in B_{p_i}(1)$ converging to $x,y,z$ respectively such that
$$|h_i^+(x)-h^+(y)|\rightarrow 0, |h_i^+(x)-h^+(z)-d(x,z)|\rightarrow 0.$$ By Lemma \ref{almostplanar1}, we know that
$$|d(x_i,y_i)^2+d(x_i,z_i)^2-d(y_i,z_i)^2|\rightarrow 0,$$
which means that
$$d(x,y)^2+d(y,z)^2=d(y,z)^2.$$

\end{proof}
\section{Metric cone}
We define the following set of Riemannian manifold with singularity
$$\mathcal{M}(v,n)=\{(M^n,p,g)|{\rm Ric}(g)\,\geq\, 0 \text{ in }\mathcal R, vol(B_p(1))\geq v>0.\}.$$
Let $(M_i,p_i)$ converge to $(X,x)$ in the Gromov-Hausdorff sense. In this section, we prove that every tangent cone of $X$ is a metric cone:
 \begin{prop}\label{metriccone}
 Let $T_{x^*}X$ be a tangent cone at $x^*\in X$. Then there is a length space $Y$ such that
 $$T_{x^*} X\cong C(Y).$$
 \end{prop}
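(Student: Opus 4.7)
The plan is to follow the Cheeger-Colding strategy of deducing the cone structure from the constancy of the volume density, adapted to this singular setting using the tools from Sections 2 and 3. First, I would pass the volume monotonicity (the corollary of Lemma \ref{laplace}) to the Gromov-Hausdorff limit $X$, so that $\theta(x^*) = \lim_{s\to 0}s^{-n}\,{\rm vol}(B_{x^*}(s))$ exists. In the rescaled space $(X, r_k^{-2}d; x^*)$, the volume ratio at a fixed scale $s$ equals $(sr_k)^{-n}\,{\rm vol}(B_{x^*}(sr_k))$, which converges to $\theta(x^*)$. Hence the tangent cone $T_{x^*}X$ has a scale-invariant, constant volume ratio $\theta(x^*)$ at every scale.

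From this constancy I would extract an integrated equality in the Laplacian comparison of Lemma \ref{laplace}: the defect $(n-1)/r - \Delta r$ with $r=d(x^*,\cdot)$ must tend to zero in $L^1$ on each annulus of the approximating rescaled sequence, since any uniform strict inequality would contradict the equality of volume ratios at nearby scales. On each such annulus I would construct a harmonic replacement $h_k$ of $r^2/2$ via property (iii) of Definition \ref{manifold}, and then apply Bochner to control the Hessian of $h_k$. Using the cut-off $\gamma_\epsilon$ from Definition \ref{manifold}(ii), the gradient estimate (Lemma \ref{gradient-esti}), and the integration-by-parts framework of Lemmas \ref{part1} and \ref{part2}, the target estimate is $\int|{\rm Hess}\,h_k - g|^2\to 0$ on compact annular subsets, paralleling the Hessian bound in Lemma \ref{harmonic-estimate}.

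With this radial Hessian control in hand, I would argue as in Lemmas \ref{triangle-lemma} and \ref{almostplanar1}, replacing $h^+$ by the radial function $h$ and its limit, to obtain a cone cosine law: the distance $d(x,y)$ in $T_{x^*}X$ depends on $x,y$ only through $d(x^*,x)$, $d(x^*,y)$, and an angular quantity at $x^*$. Then, mirroring Lemma \ref{product}, I would define the cross-section $Y = \{y\in T_{x^*}X : d(x^*,y)=1\}$ endowed with the induced length metric, and use the rays emanating from $x^*$ to construct an isometry $T_{x^*}X\cong C(Y)$. The triangle identity from the previous step ensures this map is distance-preserving, and surjectivity follows from the existence of rays at each unit-sphere point, obtained as a limit of near-minimizing geodesics in the approximating sequence.

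The main obstacle will be the integrated Hessian estimate for the harmonic approximation of $r^2/2$. Since $r$ is only Lipschitz and not smooth near the singular set, one must construct $h_k$ on an annular region with boundary data matching $r^2/2$, then bound the $C^0$ and $L^2$-gradient error between $h_k$ and $r^2/2$ using the near-equality in the Bishop-Gromov inequality together with the maximum principle (Lemma \ref{max-principle}). Controlling the interaction with the singular stratum requires careful use of the cut-off $\gamma_\epsilon$ in the Bochner identity, exactly as in Section 3, and passing the resulting integral bounds to the rescaled Gromov-Hausdorff limit demands the segment-type inequality (Lemma \ref{segment-inequ1}) to transfer interior $L^2$-estimates to estimates on geodesic triangles.
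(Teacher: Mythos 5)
Your overall strategy is the one the paper follows: reduce to an almost-volume-annulus condition at small scales, approximate $r^2/2$ on annuli by a solution of an elliptic equation, get an $L^2$ bound on ${\rm Hess}-g$ via Bochner with the cut-offs $\gamma_\epsilon$ and Lemmas \ref{part1}, \ref{part2}, transfer it to geodesic triangles with the segment inequality, deduce an approximate cosine law (Lemmas \ref{planar}, \ref{almostplanar}), and conclude with a synthetic cone criterion (Lemma \ref{cone}), with existence of nearly radial extensions coming from the volume condition (Lemma \ref{distance-appro-3}). Two points, however, deserve attention. First, your opening step --- passing the volume monotonicity to the limit, defining the density $\theta(x^*)$, and then asserting that the Laplacian-comparison defect tends to zero in $L^1$ on annuli of the approximating rescaled manifolds --- silently uses volume (Hausdorff measure) convergence under Gromov--Hausdorff limits for these singular spaces, both to make $s^{-n}\mathcal H^n(B_{x^*}(s))$ monotone on $X$ and to transfer the resulting near-rigidity back to the manifolds $M_i$ at scales $s\bar r_k$. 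In this paper that convergence is only established afterwards (Proposition \ref{volume-estimate} and the proposition in Section 6), so as written this is a gap; the paper avoids it entirely by a pigeonhole argument on scales performed directly on the manifolds (Lemma \ref{volumecone}), which needs only the monotonicity of the volume ratio on each $M_i$ together with the non-collapsing bound. Your route can be repaired either by proving the volume convergence first or by substituting the pigeonhole argument.

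Second, a smaller but real slip: the comparison function on the annulus cannot be a harmonic replacement of $r^2/2$. If $\Delta h_k=0$ then $|{\rm Hess}\,h_k-g|^2=|{\rm Hess}\,h_k|^2-2\Delta h_k+n\geq n$ pointwise, so the target estimate $\int|{\rm Hess}\,h_k-g|^2\to 0$ is impossible. One must solve $\Delta h=n$ with boundary values $r^2/2$ as in (\ref{f-harmonic-radial}); then the almost-volume-annulus condition gives $\int|\Delta p-n|$ small for $p=r^2/2$, the maximum principle and the Poincar\'e-type Lemma \ref{Poincare1} give the $C^0$ and gradient closeness of $h$ to $p$, and the Bochner argument of Lemma \ref{harmonic-estimate-annual-2} yields the Hessian bound. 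With these two corrections your outline coincides with the paper's proof.
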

 The proof depends on the following lemmas. We start with some estimates of approximate harmonic functions. Let $(M^n,p,g)\in \mathcal{M}(v,n)$ and $q\in \mathcal R \subseteq M$ and  $h$ be a solution of the following equation,
\begin{align}\label{f-harmonic-radial}
\Delta h\,=\,n, ~\text{in}~ B_q(b)\setminus \overline{B_q(a)}, ~h|_{(\partial B_q(b)\cap \mathcal R)}\,=\,\frac{b^2}{2}~\text{ and}~   h|_{\partial (B_q(a)\cap \mathcal R)}\,=\,\frac{a^2}{2}.
\end{align}
   Let $p=\frac{r(q,\cdot)^2}{2}$.

   \begin{lem}\label{harmonic-estimate-annual-1}
 Suppose that
    \begin{align}\label{cone-volume-condition-2}
 \frac{{\rm vol}(\partial B_q(b))}{{\rm vol}(\partial B_q(a))}\geq(1-\omega)\frac{b^{n-1}}{a^{n-1}}
 \end{align}
for some $\omega>0$.
Then
 \begin{align}\label{gradient-annual}\frac{1}{{\rm vol}(A_q(a,b))}
 \int_{A_q(a,b)\cap \mathcal R}|\nabla p-\nabla h|^2  d\text{v} < \Psi(\omega;a,b).
 \end{align}
Moreover,
\begin{align}\label{h-p-estimate}
 \|h-p\|_{L^\infty(A_q(a',b'))}< \Psi(\omega;a,b,a',b'),
 \end{align}
 where   $a<a'<b'<b$.

 \end{lem}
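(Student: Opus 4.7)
The plan hinges on the fact that $p := r^2/2$ is a supersolution of the equation $\Delta u = n$ satisfied by $h$, and that hypothesis (\ref{cone-volume-condition-2}) forces the Laplacian defect $n - \Delta p$ to be small in $L^1$. Concretely, Lemma \ref{laplace} together with $|\nabla r|^2 = 1$ yield $\Delta p = 1 + r\Delta r \leq n$ on $\mathcal R$; therefore $v := p - h$ satisfies $-\Delta v = (n-1) - r\Delta r \geq 0$ on $A_q(a,b)\cap \mathcal R$ and vanishes on $\partial A_q(a,b)\cap \mathcal R$, and applying Lemma \ref{max-principle} to $-v$ gives $0 \leq v \leq b^2/2$ throughout the annulus.

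For the $L^2$-gradient estimate (\ref{gradient-annual}), I would multiply $-\Delta v = (n-1) - r\Delta r$ by $\gamma_\epsilon^2 v$ and integrate by parts, sending $\epsilon \to 0$ as in Lemma \ref{part1}. The boundary integrals on $\partial A_q(a,b)\cap \mathcal R$ vanish since $v = 0$ there, and the error involving $\nabla \gamma_\epsilon$ is controlled by $\|v\|_\infty \leq b^2/2$ combined with Definition \ref{manifold}(ii). The resulting identity
\begin{align*}
\int_{A_q(a,b)\cap \mathcal R}|\nabla v|^2\,dv \,=\, \int_{A_q(a,b)\cap \mathcal R} v\,[(n-1)-r\Delta r]\,dv \,\leq\, \tfrac{b^2}{2}\int_{A_q(a,b)\cap \mathcal R}[(n-1)-r\Delta r]\,dv
\end{align*}
reduces (\ref{gradient-annual}) to the standard Bishop--Gromov defect estimate: with $S(r) = {\rm vol}(\partial B_q(r))$, a polar-coordinate integration by parts rewrites the last integral as $a\,S(a) - b\,S(b) + n\,{\rm vol}(A_q(a,b))$, and the monotonicity of $r \mapsto S(r)/r^{n-1}$ combined with hypothesis (\ref{cone-volume-condition-2}) bounds this by $C(n,a,b)\,\omega\,{\rm vol}(A_q(a,b))$.

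For the $L^\infty$-estimate (\ref{h-p-estimate}) on the shrunken annulus $A_q(a',b')$, I would combine the $L^2$-bound with a Lipschitz-plus-average argument. Poincar\'e's inequality applied to $v$ (which vanishes on $\partial A_q(a,b)$) yields ${\rm vol}(A_q(a,b))^{-1}\int v^2 \leq \Psi'(\omega;a,b)$. Since $|\nabla p| = r \leq b$ and $|\nabla h|$ is bounded on an intermediate annulus $A_q(a'',b'')$ with $a<a''<a'<b'<b''<b$ --- the gradient bound for $h$ coming from an adaptation of the Moser iteration of Lemma \ref{gradient-esti} --- the function $v$ is Lipschitz on $A_q(a'',b'')$ with constant $L = L(n,a,b,a',b')$. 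If $M := \sup_{A_q(a',b')} v$ is attained at some $x_0$, the Lipschitz control forces $v \geq M/2$ throughout $B_{x_0}(M/(2L))$; combining the Bishop--Gromov volume lower bound on this ball with the $L^2$-bound yields $M^{n+2} \leq C\,L^n\,\Psi'\,{\rm vol}(A_q(a,b))$, hence $M \leq \Psi(\omega;a,b,a',b')$.

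The main obstacle is the gradient bound for $h$ in the singular setting, since Lemma \ref{gradient-esti} is stated only for harmonic functions. The cleanest adaptation exploits that $\Delta h = n$ is constant, so the Bochner formula (\ref{bochner-inequ}) yields $\tfrac{1}{2}\Delta|\nabla h|^2 \geq |{\rm Hess}\,h|^2 \geq n$, making $|\nabla h|^2$ a bounded subsolution whose mean-value property follows from the same Moser iteration of Lemma \ref{gradient-esti} --- relying only on volume/Sobolev inequalities and the cut-off $\gamma_\epsilon$ --- thereby delivering the required interior bound on $|\nabla h|$.
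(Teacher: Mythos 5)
Your proof follows essentially the same route as the paper's: the area-ratio hypothesis combined with Bishop--Gromov monotonicity shows $\int_{A_q(a,b)}|n-\Delta p|\le \Psi(\omega;a,b)\,{\rm vol}(A_q(a,b))$, integration by parts against $p-h$ with the cutoffs $\gamma_\epsilon$ then yields (\ref{gradient-annual}), and the sup bound on the shrunken annulus comes from a Poincar\'e inequality for $p-h$ plus an interior gradient bound for $h$ and volume noncollapsing, exactly the scheme of the paper (which invokes its Lemma \ref{Poincare1} and Proposition \ref{gradient-esti}). The only small slip is asserting $0\le p-h\le b^2/2$ from the maximum principle alone (the upper bound needs a lower barrier for $h$, though any bound $\|p-h\|_{L^\infty}\le C(n,a,b)$ suffices and is easy), while your Bochner-plus-Moser justification of the gradient bound for the non-harmonic $h$ is, if anything, more careful than the paper's direct citation of its gradient estimate for harmonic functions.
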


 \begin{proof}  Since
 $$\Delta r\leq \frac{n-1}{r} \text{ in }\mathcal R,$$
 we have
 \begin{align}\label{lapalace-p}
  \Delta p=p''+p' \Delta r  \leq n, ~\text{ in }A(a,b)\cap \mathcal R.
   \end{align}
   Thus we get
\begin{align}\label{lemma-2-5-1}
 \frac{1}{\text{vol}(A_q(a, b))}\int_{A_q(a,b)\cap \mathcal R}\Delta pd\text{v}\leq n.
\end{align}
 On the other hand, by   the monotonicity formula (\ref{mono-formula}), we have
\begin{align}
\nonumber \text{vol}(A_q(a, b))\leq \frac{b^n-a^n}{na^{n-1}}\text{vol}(\partial B_q(a)).
\end{align}
It follows by (\ref{cone-volume-condition-2}),
$$\text{vol}(A_q(a, b))\leq  (1-\omega)^{-1}\frac{b^n-a^n}{nb^{n-1}}\text{vol}(\partial B_q(b)).$$
Since
\begin{align}
 \nonumber \int_{A_q(a,b)\cap \mathcal R}\Delta p d\text{v}=b\text{vol}(\partial B_q(b)) -a\text{vol}(\partial B_q(a)),
 \end{align}
we get
\begin{align}
 \nonumber &\frac{1}{\text{vol}(A_q(a, b))} \int_{A(a,b)\cap \mathcal R}\Delta p d\text{v}\\
 &\ge
  (1-\omega)\frac{nb^{n-1}}{b^n-a^n} \left( b-
a\frac{\text{vol}(\partial B_q(a))}{ \text{vol}(\partial B_q(b))}\right). \notag
 \end{align}
  Hence we derive immediately,
\begin{align}\label{lemma-2-5-2}
 \frac{1}{\text{vol}(A_q(a, b))}\int_{A_q(a,b)\cap \mathcal R}\Delta pd\text{v}\geq    n+\Psi(\omega;a, b)).
\end{align}

By (\ref{lemma-2-5-1}) and (\ref{lemma-2-5-2}), we have
\begin{align}\label{lap}
\int_{A_q(a,b)\cap \mathcal R}|\Delta p-n|  d\text{v}<  \text{vol}(A_q(a, b)) \Psi(\omega; a, b).
\end{align}
From
\begin{align}
0=&\int_{A_q(a,b)\cap \mathcal R}\gamma_\epsilon^2|\nabla (p-h)|^2+2\int_{A_q(a,b)\cap \mathcal R}(p-h)\gamma_\epsilon\langle \nabla \gamma_\epsilon,\nabla (p-h)\rangle\notag\\
&+\int_{A_q(a,b)\cap \mathcal R}(p-h)\gamma_\epsilon^2(\Delta p-\Delta h)\notag,
\end{align}
and
$$\int_{A_q(a,b)\cap \mathcal R}(p-h)\gamma_\epsilon\langle \nabla \gamma_\epsilon,\nabla (p-h)\rangle\leq C\left( \int_{A_q(a,b)\cap \mathcal R}|\nabla \gamma_\epsilon|^2\int_{A_q(a,b)\cap \mathcal R}\gamma_\epsilon^2|\nabla (p-h)|^2\right)^\frac{1}{2},$$
we get
$$\int_{A_q(a,b)\cap \mathcal R}\gamma_\epsilon^2|\nabla (p-h)|^2\leq C\int_{A_q(a,b)\cap \mathcal R}|\Delta p-n|+C\epsilon.$$
Then by (\ref{lap}), we obtain (\ref{gradient-annual}).

Applying the following Lemma \ref{Poincare1} to the function $p-h$ together with   the estimate (\ref{gradient-annual}),  we see that
\begin{align}
\nonumber \frac{1}{\text{vol}(A_q(a,b))}\int_{A_q(a,b)\cap \mathcal R}|p-h|^2  d\text{v} < \Psi(\omega; a,b).
\end{align}
Then  for any point $x\in A_q(a',b')\cap \mathcal R$, there is a point $y\in B_x(\eta)\cap \mathcal R$  such that
\begin{align}
|p(y)-h(y)|^2&\leq \frac{\text{vol}(A_q(a,b))}{\text{vol}(B_x(\eta))}  \frac{1}{\text{vol}(A_q(a,b))}\int_{A_q(a,b)}|p-h|^2  d\text{v}\notag\\
&< \frac{C(\Lambda, b)}{\eta^n}\Psi(\omega; a,b).\notag\end{align}
On the other hand,  by  Proposition \ref{gradient-esti}, we have
\begin{align} |(p(x)-h(x))-(p(y)-h(y))| &\leq (\|\nabla h\|_{C^0(A_q(a'-\eta,b'+\eta))}+1)\text{dist}(x,y)\notag\\
&\leq C(a,b,a'-\eta,b'+\eta)\eta.\notag\end{align}
Thus  we derive
\begin{align}
&|p(x)-h(x)|\notag\\
&< \frac{C(\Lambda, b)}{\eta^n}\Psi(\omega; a,b)+C(a,b,a'-\eta,b'+\eta)\eta.\notag
\end{align}
Choosing $\eta=\Psi^{\frac{1}{n+1}}$, we  prove  (\ref{h-p-estimate}).
\end{proof}
\begin{lem}\label{Poincare1}
Let $f\in L^\infty(A_q(a,b))$ be a locally Lipschitz function in $A_q(a,b)\bigcap \mathcal R$ and $f|_{\partial A_q(a,b)\cap \mathcal R}=0$, then there is a positive number $\lambda_1\leq C(b,n)$ such that
$$\lambda_1\int_{A_q(a,b)\cap \mathcal R} f^2\leq \int_{A_q(a,b)\cap\mathcal R} |\nabla f|^2.
$$
\end{lem}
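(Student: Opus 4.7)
The plan is to establish this as a Dirichlet-type Poincar\'e inequality on the annulus $A_q(a,b)$ by a direct radial integration along geodesics emanating from $q$, using only the convexity of $\mathcal R$ and Lemma \ref{laplace} (equivalently, Bishop--Gromov volume monotonicity). No appeal to the segment inequality is needed.

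First I fix $x\in A_q(a,b)\cap\mathcal R$ and consider the minimal unit-speed geodesic $\gamma_x:[0,r(x)]\to M$ from $q$ to $x$, where $r(x)=d(q,x)$. Because both $q$ and $x$ lie in $\mathcal R$, the convexity of $\mathcal R$ forces $\gamma_x$ to lie entirely in $\mathcal R$; in particular $\gamma_x(a)\in\partial B_q(a)\cap\mathcal R$, where $f$ vanishes by hypothesis. The fundamental theorem of calculus together with Cauchy--Schwarz then yields the pointwise bound
\begin{equation*}
|f(x)|^2\,\le\,(b-a)\int_{a}^{b}\bigl|\nabla f(\gamma_x(s))\bigr|^2\,\mathbf 1_{[a,r(x)]}(s)\,ds.
\end{equation*}

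Second, I parametrize $A_q(a,b)\cap\mathcal R$ in geodesic polar coordinates at $q$, writing $x=\exp_q(s\theta)$ with volume element $A(s,\theta)\,ds\,d\theta$; the cut locus of $q$ and the singular set $\mathcal S$ each have measure zero and can be discarded. Lemma \ref{laplace} is equivalent to the monotonicity of $A(s,\theta)/s^{n-1}$ in $s$, giving $A(s,\theta)\le A(t,\theta)(s/t)^{n-1}$ for $a\le t\le s\le b$. I now integrate the pointwise bound against $A(s,\theta)\,ds\,d\theta$, switch the $s$- and $t$-integrations by Fubini, and apply the comparison to control
\begin{equation*}
\int_t^{b}(s-a)\,A(s,\theta)\,ds\,\le\,\frac{(b-a)\,b^n}{n\,a^{n-1}}\,A(t,\theta).
\end{equation*}
Integrating over $\theta$ and collecting gives
\begin{equation*}
\int_{A_q(a,b)\cap\mathcal R}f^2\,dv\,\le\,C(a,b,n)\int_{A_q(a,b)\cap\mathcal R}|\nabla f|^2\,dv,
\end{equation*}
which is the desired inequality with positive constant $\lambda_1=1/C(a,b,n)$.

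The only delicate point is justifying the radial parametrization in the presence of the singular set and the cut locus of $q$. Convexity of $\mathcal R$ is precisely what guarantees that every geodesic used in the argument stays in the smooth region, where Bishop--Gromov applies classically; and both $\mathcal S$ and $\mathrm{Cut}(q)$ are $\mathcal H^n$-null, so they contribute nothing to either integral. I do not anticipate any other substantive obstacle: the hypotheses that $f$ is bounded, locally Lipschitz on $\mathcal R$, and vanishes on $\partial A_q(a,b)\cap\mathcal R$ are exactly what is needed to run the fundamental-theorem step and the Fubini swap rigorously.
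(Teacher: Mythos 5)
Your argument is essentially correct, but it follows a genuinely different route from the paper. The paper proves the lemma by testing $f^2$ against the radial auxiliary function $G_b$ from Lemma \ref{excess} (which satisfies $\Delta G_b\geq 1$ by the Laplacian comparison), integrating by parts against the cut-off functions $\gamma_\eta$ of Definition \ref{manifold} ii) to avoid the singular set, and absorbing the cross term by Cauchy--Schwarz; this uses $f\in L^\infty$ and the boundedness of $|\nabla G_b|$ on the annulus, and never touches geodesics or polar coordinates. You instead integrate along the radial minimal geodesics from $q$, using convexity of $\mathcal R$ to keep these geodesics in the smooth part and to reach a point of $\partial B_q(a)\cap\mathcal R$ where $f$ vanishes, and then integrate in polar coordinates with the comparison $A(s,\theta)\leq (s/t)^{n-1}A(t,\theta)$ --- exactly the machinery the paper itself deploys for the segment inequality (Lemma \ref{equ-seg}), so nothing you invoke is outside the paper's toolkit. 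Your approach dispenses with the cut-off functions and with the hypothesis $f\in L^\infty$, at the cost of the measure-theoretic bookkeeping you flag (a.e.\ differentiability of $f$ along rays, nullity of the cut locus and of $\mathcal S$, and continuity of $f$ up to the inner boundary within $\mathcal R$, which is implicit in the boundary condition); the paper's route is analytically softer and purely integral. One cosmetic point: your constant is $C(a,b,n)$, with an $a^{1-n}$ factor, while the statement advertises $C(b,n)$; but the paper's own constant also depends on $a$ through $\sup_{A_q(a,b)}|\nabla G_b|$, and in the application (Lemma \ref{harmonic-estimate-annual-1}) dependence on both $a$ and $b$ is harmless, so this is not a defect of your argument.
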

\begin{proof}
As in the proof of Lemma \ref{excess}, let $G_b$ be the function satisfying $\Delta G_b\geq 1.$
We have
$$\int_{A_q(a,b)\cap\mathcal R} f^2\leq \int_{A_q(a,b)\cap\mathcal R} f^2\Delta G_b.$$
Let $\gamma_\eta$ be the cut-off function, then we have
\begin{align}
\int_{A_q(a,b)\cap \mathcal R} f^2\gamma_\eta\Delta G_b&=-\int_{A_q(a,b)\cap \mathcal R}2f\langle \nabla f,\nabla G_b\rangle\gamma_\eta-\int_{A_q(a,b)\mathcal R}f^2\langle \nabla G_b,\nabla \gamma_\eta\rangle.\nonumber \\
&\leq C(b,n)\left(\int_{A_q(a,b)\cap \mathcal R} f^2\right)^{\frac{1}{2}}\left(\int_{A_q(a,b)\cap \mathcal R} |\nabla f|^2\right)^{\frac{1}{2}}+C\eta.\nonumber
\end{align}
Taking $\eta \rightarrow 0$, we get
$$\int_{A_q(a,b)\cap \mathcal R} f^2\Delta G_b\leq C(b,n)\int_{A_q(a,b)\mathcal R} |\nabla f|^2.$$
It follows that
$$\int_{A_q(a,b)\cap\mathcal R} f^2\leq C(b,n)\int_{A_q(a,b)\mathcal R} |\nabla f|^2.$$
\end{proof}

Furthermore,  we have
\begin{lem}\label{harmonic-estimate-annual-2} Under the condition in Lemma \ref{harmonic-estimate-annual-1}, it holds
 \begin{align}&\frac{1}{{\rm vol}(A_q(a',b'))}\int_{A_q(a',b')\cap \mathcal R}|{\rm Hess }\,h-g|^2d\text{v}\notag\\
 &< \Psi(\omega;a,b,a',b'),
 \end{align}
  where  $a<a'<b'<b$.
   \end{lem}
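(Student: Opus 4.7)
The plan is to derive the Hessian bound pointwise via a Bochner-type comparison between $h$ and $p=r^{2}/2$, and then integrate against a cut-off, reducing the estimate to the $L^{2}$ gradient bound (\ref{gradient-annual}) already obtained in Lemma \ref{harmonic-estimate-annual-1}.

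First, since $\Delta h = n$ on $\mathcal R$, the algebraic identity $|{\rm Hess}\,h - g|^{2} = |{\rm Hess}\,h|^{2} - 2\Delta h + n = |{\rm Hess}\,h|^{2} - n$ holds. The Bochner formula (\ref{bochner-inequ}) applied to $h$, using ${\rm Ric}\ge 0$ and $\nabla \Delta h = 0$, gives $|{\rm Hess}\,h|^{2} \le \tfrac{1}{2}\Delta|\nabla h|^{2}$; meanwhile $|\nabla p|^{2} = r^{2}$ together with Lemma \ref{laplace} yields $\tfrac{1}{2}\Delta|\nabla p|^{2} = 1 + r\,\Delta r \le n$. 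Subtracting, one obtains the key pointwise inequality
\begin{align}
|{\rm Hess}\,h - g|^{2} \,\le\, \tfrac{1}{2}\Delta u,\qquad u := |\nabla h|^{2} - |\nabla p|^{2}, \notag
\end{align}
valid on $A_q(a,b)\cap\mathcal R$.

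Next, I would choose a cut-off $\phi$ supported in $A_q(a,b)$ with $\phi\equiv 1$ on $A_q(a',b')$ and $|\nabla\phi|,|\Delta\phi|\le C(a,b,a',b')$, obtained by gluing two cut-offs of the type produced in Lemma \ref{cut-off} near the inner and outer boundaries. Multiplying the pointwise inequality by $\phi^{2}\gamma_\epsilon^{2}$ and integrating, then invoking Lemmas \ref{part1} and \ref{part2} to justify integration by parts across the singular set and letting $\epsilon\to 0$, yields
\begin{align}
\int_{A_q(a',b')\cap\mathcal R}|{\rm Hess}\,h - g|^{2}\,d\mathrm{v} \,\le\, \tfrac{1}{2}\int_{A_q(a,b)\cap\mathcal R} u\,\Delta\phi^{2}\,d\mathrm{v} \,\le\, C\int_{A_q(a,b)\cap\mathcal R}|u|\,d\mathrm{v}. \notag
\end{align}
Writing $|u|\le(|\nabla h|+|\nabla p|)\,|\nabla h-\nabla p|$, using $|\nabla p|\le b$ together with an $L^{\infty}$ bound on $|\nabla h|$ on a slightly enlarged annulus (obtained by Moser iteration on the subharmonic function $|\nabla h|^{2}$, in the spirit of Lemma \ref{gradient-esti}), Cauchy--Schwarz and (\ref{gradient-annual}) reduce the right-hand side to $\Psi(\omega;a,b,a',b')\cdot{\rm vol}(A_q(a,b))$. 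A final Bishop--Gromov comparison bounding ${\rm vol}(A_q(a,b))$ by $C(a,b,a',b')\,{\rm vol}(A_q(a',b'))$ gives the desired normalized inequality.

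The main technical obstacle is justifying the integration by parts in the singular setting. The hypothesis $\int \phi^{2}|\nabla u|^{2}<\infty$ of Lemma \ref{part1} is not immediate: it amounts essentially to the local Hessian integrability we are trying to prove. The way out is to exploit the differential inequality $\Delta u \ge 2|{\rm Hess}\,h|^{2} - 2n$ combined with $|\nabla u|^{2} \le C|{\rm Hess}\,h|^{2} + C$, which together give $\Delta u \ge c|\nabla u|^{2} - C'$ on $\mathcal R$. The proof of Lemma \ref{part2} adapts to this inhomogeneous lower bound --- the extra additive constant contributes only a harmlessly bounded term $C'\int\phi\gamma_\epsilon^{2}$ after integrating --- and still delivers $\int\phi|\nabla u|^{2}<\infty$, which legitimates the integration by parts and closes the argument.
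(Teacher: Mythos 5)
Your proposal is correct and follows essentially the same route as the paper: the identity $|{\rm Hess}\,h-g|^2=|{\rm Hess}\,h|^2-n$, the Bochner formula with ${\rm Ric}\ge 0$ and $\Delta p\le n$, a cut-off from Lemma \ref{cut-off} on the annulus, integration by parts across $\mathcal S$ via Lemmas \ref{part1}--\ref{part2}, and reduction to the gradient estimate (\ref{gradient-annual}). The only differences are organizational (you group the terms into $u=|\nabla h|^2-|\nabla p|^2$ and adapt Lemma \ref{part2} to an inhomogeneous inequality, whereas the paper applies part1/part2 to $|\nabla h|^2$ and converts $\int|\nabla p|^2\Delta\varphi$ into $\int\varphi\,\Delta p$), and your explicit treatment of the integrability hypothesis and of the $L^\infty$ bound on $|\nabla h|$ only makes implicit steps of the paper precise.
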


 \begin{proof} First observe that
 \begin{align}
 |\text{Hess }h-g|^2=|\text{Hess h}|^2+(n-2\Delta h).\notag
 \end{align}
Let  $\varphi$ be a cut-off function  of $A_q(a,b)$ as constructed  in Lemma \ref{cut-off} which satisfies,
\begin{align}
& 1)   ~\varphi\equiv 1,~\text{ in}~ A_q(a',b')\cap \mathcal R;\notag\\
&  2) ~|\nabla\varphi|,  |\triangle\varphi|~\text{is bounded in}~ A_q(a,b)\cap\mathcal R.\notag
\end{align}
 Then
 \begin{align}\label{two-parts}
 \nonumber &\frac{1}{\text{vol}(A_q(a,b))}\int_{A_q(a,b)\cap \mathcal R}\varphi|\text{Hess h}-g|^2 d\text{v}\\
 \nonumber &=\frac{1}{\text{vol}(A_q(a,b))}\int_{A_q(a,b)\cap \mathcal R}\varphi|\text{Hess }h|^2 d\text{v}\\
 &-\frac{1}{\text{vol}(A_q(a,b))}\int_{A_q(a,b)\cap \mathcal R}n\varphi d\text{v}.
 \end{align}

 By  the  Bochner formula Lemma \ref{part1} and Lemma \ref{part2}, we have
 \begin{align}\label{a1}
 \nonumber  \frac{2}{\text{vol}(A_q(a,b))}\int_{A_q(a,b)\cap \mathcal R}\varphi|\text{Hess }h|^2 d\text{v}&\leq \frac{1}{\text{vol}(A_q(a,b))}\int_{A_q(a,b)\cap \mathcal R}\varphi \Delta |\nabla h|^2d\text{v}\\
 &=\frac{1}{\text{vol}(A_q(a,b))}\int_{A_q(a,b)\cap \mathcal R} |\nabla h|^2 \Delta \varphi d\text{v}
 \end{align}

By Lemma \ref{harmonic-estimate-annual-1} and Lemma \ref{part1}, we have
\begin{align}\label{a2}
\nonumber  \frac{1}{\text{vol}(A_q(a,b))}\int_{A_q(a,b)}|\nabla h|^2 \Delta\varphi d\text{v}
&\leq \frac{1}{\text{vol}(A_q(a,b))}\int_{A_q(a,b)}|\nabla p|^2\Delta \phi d\text{v}+\Psi(\omega;a,b,a',b')\\
 &=\frac{2}{\text{vol}(A_q(a,b))}\int_{A_q(a,b)} p\Delta\varphi d\text{v}+\Psi(\omega;a,b,a',b') \notag\\
 &=\frac{2}{\text{vol}(A_q(a,b))}\int_{A_q(a,b)} \phi \Delta p d\text{v}+\Psi(\omega;a,b,a',b')
 \end{align}
It follows from (\ref{two-parts}), (\ref{a1}) and (\ref{a2}),
 \begin{align}
 \nonumber &\frac{1}{\text{vol}(A_q(a_1,b_1))}\int_{A_q(a_1,b_1)\cap \mathcal R}|\text{Hess }h-g|^2 d\text{v}\\
 \nonumber & \leq\frac{C(a_1,b_1,a,b)}{\text{vol}(A_q(a,b))}\int_{A_q(a,b)\cap \mathcal R}\varphi|\text{Hess }h-g|^2 d\text{v}\\
 \nonumber &\leq\frac{C(a_1,b_1,a,b)}{\text{vol}(A_q(a,b))}\int_{A_q(a,b)\cap \mathcal R}\varphi(\Delta p-n) d\text{v}+ \Psi(\omega;a,b,a',b')
 \\
 \nonumber &\leq  \Psi(\omega;a,b,a',b').
 \end{align}
Here we used  (\ref{lapalace-p}) at last inequality.
\end{proof}

\begin{lem}\label{planar}
 Given $b>a>0$, for any $\epsilon>0$, there exits $\delta>0$ such that the following holds: let $x,y\in A_q(a,b)$ be two points with $d(x,y)\leq r(y)-r(x)+\delta$, $\gamma(s)(0\leq s \leq c)$ be the unique geodesic connecting $x$ and $y$, and $\gamma_s(t)$  be  a family of geodesic  curves  connecting $z$ and $\gamma(s)$.
Suppose that
\begin{align}
&i)~~ |h-p|_{C^0(A_q(a,b))}\leq \delta; \notag \\
&ii)~~ \int_0^c|\nabla h(\gamma(s))-\nabla p|\leq\delta;\notag\\
& iii)~~\int_0^c\int_0^{l(s)}|{\rm Hess }\,h-g|dtds \leq\delta\notag
\end{align}
where $r(\cdot)=\text{dist}(q,\cdot)$. Then
\begin{align}\label{cosine}
|d(z,y)^2r(x)-d(x,z)^2r(y)+ r(z)^2\big(r(y)-r(x)\big)-r(x)r(y)\big(r(y)-r(x)\big)| <\epsilon.
\end{align}
\end{lem}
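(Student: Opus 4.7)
The plan is to adapt the proof of Lemma \ref{triangle-lemma} with $p=\tfrac{1}{2}r^2$ playing the role of the $1$-Lipschitz function $b^+$. The essential new feature is that $\nabla p = r\,\nabla r$ has norm $r$ rather than $1$, and on a smooth metric cone one has $\mathrm{Hess}\,p = g$. Thus the role of ``Hessian vanishes'' from Lemma \ref{triangle-lemma} is replaced by ``Hessian equals the metric'', which upgrades the flat cosine law to the cone cosine law (\ref{cosine}).

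Write $R(s)=r(\gamma(s))$. Since $r$ is $1$-Lipschitz, $\int_0^c|\nabla r-\gamma'|^2\,ds = 2(c-(r(y)-r(x)))\le 2\delta$; by Cauchy--Schwarz and (ii), using $\nabla p = r\nabla r$ and $r\le b$, this yields
$$ \int_0^c\bigl|\nabla h(\gamma(s))-R(s)\gamma'(s)\bigr|\,ds \le \Psi(\delta;a,b),\qquad |R(s)-(r(x)+s)|\le \delta.$$
Next, for each $s$ I integrate $\frac{d}{dt}\langle\nabla h(\gamma_s(t)),\gamma_s'(t)\rangle = \mathrm{Hess}\,h(\gamma_s',\gamma_s')$ along $\gamma_s$, replacing $\mathrm{Hess}\,h$ by $g$ with error controlled by (iii), and using (i) to replace the boundary values $h(\gamma(s))$, $h(z)$ by $\tfrac12 R(s)^2$, $\tfrac12 r(z)^2$. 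This produces
$$ l(s)\,\langle\nabla h(\gamma(s)),\gamma_s'(l(s))\rangle = \tfrac12\bigl(R(s)^2+l(s)^2-r(z)^2\bigr) + E_1(s),\qquad \int_0^c|E_1|\,ds\le \Psi(\delta;a,b).$$
Combining this with the first variation formula $l'(s)=\langle\gamma_s'(l(s)),\gamma'(s)\rangle$ and the previous $L^1$-bound,
$$ 2R(s)l(s)l'(s) = R(s)^2+l(s)^2-r(z)^2 + E_2(s),\qquad \int_0^c|E_2|\,ds\le \Psi(\delta;a,b).$$

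This is exactly the cone ODE for $l^2/R$. Rewriting it as
$$R(s)^2\,\frac{d}{ds}\Bigl(\frac{l(s)^2}{R(s)}\Bigr) = 2Rll'-l^2R' = (R^2-r(z)^2) + l^2(1-R') + E_2,$$
I integrate over $[0,c]$; the $l^2(1-R')$ contribution is $O(\delta)$ since $R'\le 1$ a.e.\ and $\int_0^c(1-R')\,ds\le\delta$. Using the pointwise bound $|R(s)-(r(x)+s)|\le\delta$ together with $R\ge a>0$, a direct computation gives $\int_0^c ds/R(s)^2 = \frac{r(y)-r(x)}{r(x)r(y)}+O(\delta)$. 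Multiplying the resulting identity for $l(c)^2/R(c)-l(0)^2/R(0)$ by $r(x)r(y)$, and recalling $l(c)=d(z,y)$, $l(0)=d(z,x)$, one obtains
$$r(x)\,d(z,y)^2 - r(y)\,d(z,x)^2 = (r(y)-r(x))\bigl(r(x)r(y)-r(z)^2\bigr) + \Psi(\delta;a,b),$$
which is precisely (\ref{cosine}).

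The main technical obstacle is the careful bookkeeping of error terms in the step that replaces $R(s)\gamma'(s)$ by $\nabla h(\gamma(s))$: only an $L^1$-bound of order $\Psi(\delta)$ is available for their difference, so one must pair against the \emph{unit} vector $\gamma_s'(l(s))$ and crucially use $R(s)\ge a>0$ to divide safely when passing from $2Rll'$ to $\frac{d}{ds}(l^2/R)$. This is why the hypothesis $0<a<a'<b'<b$ is indispensable and why $\Psi$ is allowed to depend on $a,b$ in addition to $\delta$.
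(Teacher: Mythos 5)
Your proof is correct and follows essentially the same route as the paper's: approximate $\nabla p = r\nabla r$ along $\gamma$ by $\nabla h$ using (ii) and the excess bound, integrate $\mathrm{Hess}\,h \approx g$ along the geodesics $\gamma_s$ with boundary values controlled by (i), and combine with the first variation formula $l'(s)=\langle\gamma_s'(l(s)),\gamma'(s)\rangle$ to obtain the cone relation for $l^2$. The only cosmetic difference is that you normalize by $R(s)=r(\gamma(s))$ (hence the extra $l^2(1-R')$ term, which you handle correctly), while the paper divides by $(s+r(x))^2$, absorbing that error into the replacement $R(s)\approx s+r(x)$; the bookkeeping is otherwise identical.
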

\begin{proof}
From  $d(x,y)\leq r(y)-r(x)+\delta$, we know that $|r(\gamma(s))-(r(x)+s)|\leq \delta.$ Then by i) we get
\begin{align}\label{diff}
|h(\gamma(s))-\frac{(r(x)+s)^2}{2}|\leq C(b)\delta.
\end{align}
From
\begin{align} h(\gamma_s(l(s)))-h(z)&=\int_0^{l(s)}\frac{d}{dt}h(\gamma_s(t)),\notag\\
\frac{d}{dt}h(\gamma_s(t))|_{t=l(s)}-\frac{d}{dt}h(\gamma_s(t))&=\int_t^{l(s)}\text{Hess }h(\gamma_s'(\tau),\gamma_s'(\tau)\notag,
\end{align}
we have
\begin{align} l(s) h'(\gamma_s(l(s)))&=h(\gamma_s(l(s)))-h(z)-\frac{l^2(s)}{2}\notag\\
&+\int_0^{l(s)} dt\int_t^{l(s)} (\text{Hess }h(\gamma_s'(\tau),\gamma_s'(\tau)) -g(\gamma_s'(\tau),\gamma_s'(\tau)))  d\tau.\notag
\end{align}

From (\ref{diff}), we get
 \begin{align}l(s)h'(\gamma_s(l(s)))&=\frac{(r(x)+s)^2-r^2(z)}{2}+\frac{l^2(s)}{2}\notag\\
&+ \int_0^{l(s)} dt\int_t^{l(s)} (\text{Hess }h(\gamma_s'(\tau),\gamma_s'(\tau)) -g(\gamma_s'(\tau),\gamma_s'(\tau)))  d\tau+
 \Psi(\delta).\notag
\end{align}

Hence we derive
\begin{align}
\int_0^c\left(\frac{2l(s)h(\gamma_s'(l(s)))}{(s+r(x))^2}-\frac{l^2(s)}{(s+r(x))^2}\right) ds=a+\frac{r^2(z)}{r(x)+c}-\frac{r^2(z)}{r(x)}\notag
\\+\int_0^c \frac{ds}{(r(x)+s)^2}\int_0^{l(s)} dt\int_t^{l(s)} (\text{Hess }h(\gamma_s'(\tau),\gamma_s'(\tau)) -g(\gamma_s'(\tau),\gamma_s'(\tau)))  d\tau+ \Psi(\delta).\notag
\end{align}
By iii), we get
\begin{align}\label{short-distance-2}
\int_0^a\left(\frac{2l(s)h'(\gamma_s(l(s)))}{(s+r(x))^2}-\frac{l^2(s)}{(s+r(x))^2}\right) ds=c+\frac{r^2(z)}{r(x)+c}-\frac{r^2(z)}{r(x)}+\frac{\Psi(\delta)}{r(x)^2}.
\end{align}
Since $d(x,y)\leq r(y)-r(x)+\delta,$ we have
\begin{align}
\int_0^c|\nabla d(q,.)-\gamma'(s)|ds&\leq c\int_0^c|\nabla d(q,.)-\gamma'(s)|^2ds=2c\left(c-\int_0^c d'(q,\gamma(s))ds\right)\notag\\
&=2c[c-(d(q,y)-d(q,x))]\leq 2c\delta.
\end{align}
Combined with ii) we get
\begin{align}\label{gra}
&\int_0^c|\nabla h(\gamma(s))-r(\gamma(s))\gamma'(s)|\\
=&\int_0^c|\nabla h(\gamma(s))-\nabla p|ds+\int_0^c|\nabla p-r(\gamma(s))\gamma'(s)|\,\leq\, C(b)\delta.\notag
\end{align}
 By the first variation formula,
$$
l'(s)=\langle\gamma_s'(l(s)),\gamma'(s)\rangle,
$$
Now from (\ref{gra}) we get
\begin{align}
&\int_0^c\left(\frac{l^2(s)}{s+r(x)}\right)' ds\notag\\
&=\int_0^c\left(\frac{2l(s)l'(s)}{s+r(x)}-\frac{l^2(s)}{(s+r(x))^2}\right) ds\notag\\
&=\int_0^c\left(\frac{2l(s) (s+r(x))\langle \gamma_s'(l(s)),\gamma'(s)\rangle}{(s+r(x))^2}-\frac{l^2(s)}{(s+r(x))^2}\right)ds \notag\\
&=\int_0^c\left(\frac{2l(s)\langle\gamma_s'(l(s)),\nabla h(\gamma(s))\rangle}{(s+r(x))^2}-\frac{l^2(s)}{(s+r(x))^2}\right)ds+\Psi(\delta)\notag\\
  &=\int_0^ac \left(\frac{2l(s)h'(\gamma_s(l(s)))}{(s+r(x))^2} - \frac{l^2(s)}{(s+r(x))^2}\right) ds +\Psi(\delta)\notag.
   \end{align}
Combined with (\ref{short-distance-2}), we get (\ref{cosine})  immediately.

\end{proof}

\begin{lem}
Given $b>\epsilon>0$, there exits $\delta>0$ such that the following holds: assume that $x,y\in A_q(\epsilon,b)$ with $d(x,y)\leq r(y)-r(x)+\delta$ and $h$ satisfying
\begin{align}
&i)~ |h-p|_{C^0(A_q(\epsilon,b))}\leq \delta; \notag \\
&ii)~ \int_{A_q(\frac{\epsilon}{8},b)}|\nabla h-\nabla p|<\delta<<1;\notag\\
& iii)~\int_{A_q(\frac{\epsilon}{8},b)}|{\rm Hess }\,h-g|dtds <\delta<<1.\notag
\end{align}
Then for any $z\in A_q(\epsilon,b)$, we have
\begin{align}\label{cosine1}
|d(z,y)^2r(x)-d(x,z)^2r(y)+ r(z)^2\big(r(y)-r(x)\big)-r(x)r(y)\big(r(y)-r(x)\big) |\,<\,\epsilon.
\end{align}
\end{lem}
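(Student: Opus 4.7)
The plan is to reduce this lemma to Lemma \ref{planar} by the same averaging device already used in the proof of Lemma \ref{almostplanar1}: Lemma \ref{planar} requires pointwise line-integral smallness of $|\nabla h - \nabla p|$ along one specific geodesic and of $|\text{Hess}\,h - g|$ along a whole triangle family, whereas our hypotheses (ii), (iii) are only volume-integral bounds over the annulus $A_q(\epsilon/8, b)$. The segment-type inequalities (Lemma \ref{equ-seg} and Lemma \ref{segment-inequ1}) are exactly the tool that converts the latter into the former.

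First I would fix a small auxiliary radius $\eta_1 > 0$ (depending only on $\epsilon$ and $b$) and look at the balls $A_1 = B_x(\eta_1)$, $A_2 = B_y(\eta_1)$, $A_3 = B_z(\eta_1)$. Because $d(x,y) \leq r(y) - r(x) + \delta$, for $\eta_1$ and $\delta$ small enough every minimal geodesic from $A_1$ to $A_2$ stays in an intermediate annulus $W \subseteq A_q(\epsilon/4, b)$. Let $Z$ be a set containing all minimal geodesics from $A_3$ to $W$; after verifying $Z \subseteq A_q(\epsilon/8, b)$, applying Lemma \ref{equ-seg} to $e = |\nabla h - \nabla p|$ on $W$ and Lemma \ref{segment-inequ1} to $e = |\text{Hess}\, h - g|$ on $Z$ gives, after a Fubini rearrangement, bounds by $C(n, b)\,\delta$ times the appropriate product of ball volumes.

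A pigeonhole over the triple product $A_1 \times A_2 \times A_3$ then produces specific points $x^* \in A_1$, $y^* \in A_2$, $z^* \in A_3$ for which the line integral of $|\nabla h - \nabla p|$ along $\gamma_{x^* y^*}$ and the triangle integral of $|\text{Hess}\, h - g|$ over $\Delta_{x^* y^* z^*}$ are both at most $C(n, b, \eta_1)\,\delta$. The triangle inequality also gives $d(x^*, y^*) \leq r(y^*) - r(x^*) + \delta + 4\eta_1$, and hypothesis (i) transfers verbatim. Picking $\eta_1$ first, and then $\delta$ so small that $\delta + 4\eta_1$ and all three integrals fall below the threshold $\delta'$ provided by Lemma \ref{planar} (applied with tolerance $\epsilon/2$), I get (\ref{cosine}) for $(x^*, y^*, z^*)$ with error $\epsilon/2$.

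The last step is to swap the starred points for the originals. The expression on the left of (\ref{cosine1}) is a cubic polynomial in $d(x,y), d(x,z), d(y,z), r(x), r(y), r(z)$ whose six variables are each bounded by $2b$, and is therefore $C(b)$-Lipschitz in each; choosing $\eta_1$ small enough that $C(b)\,\eta_1 \leq \epsilon/2$ (independently of $\delta$) forces the discrepancy between starred and unstarred versions to be at most $\epsilon/2$, and the two $\epsilon/2$ estimates combine to give (\ref{cosine1}). The main technical obstacle is ensuring the inclusion $Z \subseteq A_q(\epsilon/8, b)$: for a general $z \in A_q(\epsilon, b)$ a minimal geodesic to $W$ could in principle dip closer to $q$, and this is precisely why (ii) and (iii) are stated on the strictly larger annulus $A_q(\epsilon/8, b)$ rather than $A_q(\epsilon, b)$. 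The buffer between $\epsilon$ and $\epsilon/8$ must be exploited carefully — using the Laplacian comparison of $r$ in $\mathcal R$ to control how much the radial coordinate can decrease along a geodesic of length $\leq 2b$ starting from the annulus $A_q(\epsilon, b)$.
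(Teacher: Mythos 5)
Your overall scheme---pigeonhole via the segment inequalities (Lemma \ref{equ-seg}, Lemma \ref{segment-inequ1}) to produce nearby points $x^*,y^*,z^*$ with pointwise line/triangle integral bounds, then Lemma \ref{planar}, then a Lipschitz argument in the six distances to pass back to $x,y,z$---is the right skeleton and is what the paper does. But the step you yourself flag as the ``main technical obstacle'' is where the proposal breaks down: the inclusion of the triangle region $Z$ (the geodesics from $z^*$ to points of $\gamma_{x^*y^*}$) into $A_q(\epsilon/8,b)$ simply cannot be arranged, no matter how $\delta$ and $\eta_1$ are chosen. If $z$ and a point $w$ on $\gamma_{x^*y^*}$ are nearly antipodal with respect to $q$ (think of $\mathbb{R}^n$ with $z=(\epsilon,0,\dots)$ and $w=(-\epsilon,0,\dots)$), the minimal geodesic between them passes arbitrarily close to $q$, even through $q$. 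Laplacian comparison cannot rule this out: it gives an upper bound on $\Delta r$, i.e.\ concavity/volume information, not a lower bound on $r$ along a minimal geodesic between two points of the annulus; the only general lower bound is $r(\gamma_s(t))\geq \max\{r(z^*)-t,\ r(\gamma(s))-(l(s)-t)\}$, which degenerates to nearly $0$ exactly in the antipodal configuration. So the reduction ``everything stays in $A_q(\epsilon/8,b)$, apply Lemma \ref{planar}'' has an uncovered case, and the hypotheses (ii), (iii) on the larger annulus cannot close it, because the integrands are only controlled where the characteristic function $\chi_{A_q(\epsilon/8,b)}$ is nonzero.

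This missing case is precisely what the paper's proof treats separately. After the pigeonhole (stated with the cutoff $\chi_{A_q(\epsilon/8,b)}$ inserted, for exactly this reason), the paper sets $s_0=\sup\{s\,:\,\gamma_s \text{ meets } B_q(\epsilon/8)\}$ and $w=\gamma(s_0)$. A minimal geodesic from $z'$ to $w$ entering $B_q(\epsilon/8)$ forces $|d(w,z')-(r(w)+r(z'))|\leq \epsilon/4$, and since $r$ increases almost at unit speed along $\gamma_{x'y'}$ (from $d(x,y)\leq r(y)-r(x)+\delta$), one gets $d(z',x')\geq r(x')+r(z')-\epsilon/2-\delta$: the configuration is forced to be nearly antipodal. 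In that degenerate regime the paper applies Lemma \ref{planar} only to the triple $w,y',z'$ (for $s\geq s_0$ the triangle geodesics do lie in the annulus, so the integral hypotheses apply), and recovers the identity for $x',y',z'$ from the near-equality $d(z',x')\approx r(x')+r(z')$ and $d(x',w)\approx r(w)-r(x')$. So the buffer between $\epsilon$ and $\epsilon/8$ is not there to confine the geodesics---it cannot---but to make this dichotomy work. To repair your argument you would need to add this second branch (or an equivalent treatment of the near-antipodal case); as written, the proof is incomplete at that point, while the remaining steps (pigeonhole and the final perturbation from starred to unstarred points) are sound.
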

\begin{proof}
From Lemma \ref{harmonic-estimate-annual-2} and , we know that there exists $x'\in B_x(\eta), y'\in B_y(\eta), z'\in B_z(\eta)$, such that
\begin{align}
\int_{\gamma_{x'y'}}|\nabla h(\gamma(s))-\nabla p|\chi_{A_q(\frac{\epsilon}{8},b)}\leq \delta;\notag\\
\int_{\Delta_{x',y',z'}}|{\rm Hess }\,h-g|\chi_{A_q(\frac{\epsilon}{8},b)}\leq \delta \notag
\end{align}
If the geodesics $\gamma_s(t)$ all lies in $A_q(\frac{\epsilon}{8},b)$, the result follows from Lemma \ref{planar}. Now, we define
$$s_0=\sup\{s|d(q,\gamma_s)\leq \frac{\epsilon}{8}\}.$$ Denoting $\gamma(s_0)$ by $w$, we have $$|d(w,z')-(r(w)+r(z'))|\leq \frac{\epsilon}{4}.$$ It follows that
$$d(z',x')\geq d(z',w)-d(x',w)\geq r(w)+r(z')-\frac{\epsilon}{4}-s_0.$$
Since $|r(w)-r(x')-s_0|\leq \delta+\frac{1}{10}\epsilon$, we have
$$d(z',x')\geq r(x')+r(z')-\frac{\epsilon}{2}-\delta.$$

Applying Lemma \ref{planar} to $w,y',z'$,  we have
$$|d(z',y')^2r(w)-d(w,z')^2r(y')+ r(z')^2(r(y')-r(w))-r(w)r(y')(r(y')-r(w))|=\Psi(\delta|b).$$

The lemma is proved.
\end{proof}

\begin{lem}\label{almostplanar}
For any $\eta>0$, there is $\omega=\omega(\eta,a,b)$ such that the following holds:
if $$\frac{{\rm vol}(\partial B_p(b))}{{\rm vol}(\partial B_p(\omega))}\geq(1-\omega)\frac{b^{n-1}}{\omega^{n-1}},$$ and
$x,y,z \in A_p(a,b)$ satisfies $d(x,y)\leq r(y)-r(x)+\omega$, then we have
$$ d(z,y)^2r(x)-d(x,z)^2r(y)+r(z)^2(r(y)-r(x))-r(x)r(y)(r(y)-r(x)) <\eta. $$
\end{lem}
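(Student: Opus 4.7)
The plan is to reduce Lemma \ref{almostplanar} to the preceding unnamed lemma by constructing a harmonic function $h$ on a slightly enlarged annulus and verifying its three hypotheses i), ii), iii) from the volume assumption. The main content of this lemma is thus the translation of an analytic (harmonic function) input into a purely geometric (volume ratio) input.

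First I would solve the Dirichlet problem
\[
\Delta h \,=\, n\text{ in }A_p(a/16, 2b), \qquad h \,=\, \tfrac{r^2}{2}\text{ on }\partial A_p(a/16, 2b),
\]
using Definition \ref{manifold} iii). The hypothesis
\[
\frac{\mathrm{vol}(\partial B_p(b))}{\mathrm{vol}(\partial B_p(\omega))}\,\ge\,(1-\omega)\frac{b^{n-1}}{\omega^{n-1}},
\]
combined with the Bishop--Gromov monotonicity established in Lemma 2.2, forces the relative area ratio to be near-conical on every sub-annulus $A_p(a_1,b_1)\subset A_p(\omega,b)$. In particular, condition (\ref{cone-volume-condition-2}) of Lemma \ref{harmonic-estimate-annual-1} holds on $A_p(a/16, 2b)$ with error $\Psi(\omega; a,b)\to 0$ as $\omega\to 0$.

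Invoking Lemmas \ref{harmonic-estimate-annual-1} and \ref{harmonic-estimate-annual-2} then furnishes
\[
\|h-p\|_{L^\infty(A_p(a/16,b))}<\Psi(\omega;a,b),
\]
together with $L^2$ control of $\nabla h-\nabla p$ and of $\mathrm{Hess}\,h-g$ on $A_p(a/8,b)$, both by $\Psi(\omega;a,b)$ after normalization by volume. Since the volume is bounded above by $C(n,b)$, the Cauchy--Schwarz inequality upgrades the $L^2$ bounds to $L^1$ bounds
\[
\int_{A_p(a/8,b)\cap\mathcal R}|\nabla h-\nabla p| + \int_{A_p(a/8,b)\cap\mathcal R}|\mathrm{Hess}\,h-g| \,<\,\Psi(\omega;a,b).
\]
These three estimates are exactly the hypotheses i), ii), iii) of the preceding lemma (with its $\epsilon$ equal to our $a$ and its $\delta$ equal to $\Psi(\omega;a,b)$). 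Applying that lemma to any $x,y,z\in A_p(a,b)$ with $d(x,y)\le r(y)-r(x)+\omega$ yields the cosine-type inequality (\ref{cosine}), which is exactly (\ref{cosine1}). Choosing $\omega$ small enough that $\Psi(\omega;a,b)<\eta$ concludes the proof.

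The main obstacle here is essentially bookkeeping: one must thread the $\Psi$-type errors through the three nested scales ($\omega$, the solving annulus $[a/16,2b]$, the estimate annulus $[a/8,b]$, and the conclusion annulus $[a,b]$) and confirm that the constants depending on $a$ and $b$ in the successive applications of Lemmas \ref{harmonic-estimate-annual-1}, \ref{harmonic-estimate-annual-2}, and the preceding lemma do not spoil the smallness. Once this bookkeeping is done, the geometric content is fully encapsulated in the already-proven lemmas, so no further analysis beyond Cauchy--Schwarz is needed.
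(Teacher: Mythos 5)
Your overall strategy is certainly the intended one (the paper in fact states Lemma \ref{almostplanar} without proof): solve $\Delta h=n$ on a larger annulus, use the area-ratio hypothesis to verify the hypothesis (\ref{cone-volume-condition-2}) of Lemma \ref{harmonic-estimate-annual-1}, upgrade the $L^2$ bounds of Lemmas \ref{harmonic-estimate-annual-1} and \ref{harmonic-estimate-annual-2} to $L^1$ by Cauchy--Schwarz, and then invoke the unlabeled lemma preceding Lemma \ref{almostplanar}. However, two steps fail as written. The hypothesis only involves the radii $\omega$ and $b$, and the monotonicity of $r^{1-n}\,\mathrm{vol}(\partial B_p(r))$ propagates the almost-conical ratio only to pairs of radii inside $[\omega,b]$; it says nothing at radius $2b$, so your assertion that (\ref{cone-volume-condition-2}) holds on the solving annulus $A_p(a/16,2b)$ is unjustified. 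Nor can you simply retreat to the solving annulus $A_p(a/16,b)$: the estimate (\ref{h-p-estimate}) and the Hessian bound of Lemma \ref{harmonic-estimate-annual-2} are interior estimates (valid only on $A_p(a',b')$ with $b'<b$), whereas the hypotheses i)--iii) of the unlabeled lemma require control on $A_q(\epsilon,b)$ and $A_q(\epsilon/8,b)$ up to the outer radius $b$, which is also the outer radius of your conclusion region $A_p(a,b)$. So you must either assume the area-ratio condition at some radius strictly larger than $b$ (harmless in the application to Proposition \ref{metriccone}, where one can rescale) or shrink the conclusion annulus; as written there is a genuine mismatch of scales that the phrase ``bookkeeping'' does not resolve.

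The second problem is the final parameter count: in the unlabeled lemma the error in the conclusion is its parameter $\epsilon$, which is simultaneously its inner radius. Applying it ``with its $\epsilon$ equal to our $a$'' therefore yields the cosine inequality with error $a$, not $\eta$, and choosing $\omega$ so that $\Psi(\omega;a,b)<\eta$ only makes the hypotheses' $\delta$ small; it does not improve the conclusion. The repair is straightforward --- apply that lemma with $\epsilon=\min(a,\eta)$, build $h$ on an annulus with correspondingly smaller inner radius, and take $\omega$ below the resulting threshold $\delta(\min(a,\eta),b)$ --- but your closing sentence, as stated, does not deliver the bound $\eta$ claimed in Lemma \ref{almostplanar}.
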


\begin{lem}\label{distance-appro-3}
Given $a<c<b$. For any $\eta>0$, there exists $\omega=\omega(a,b,c,\eta,n)$ such that the following is true:
if
\begin{align}\label{volume-condition-2}
\frac{{\rm vol}(\partial B_p(b))}{{\rm vol}(\partial B_p(a))}\geq(1-\omega)\frac{b^{n-1}}{a^{n-1}},
\end{align}
then  for any point $q$ on $\partial B_p(c)$, there exists $q'$ on $\partial B_p(b)$ such that
\begin{align}\label{distance}
d(q,q')\leq b-a+\eta.
\end{align}
 \end{lem}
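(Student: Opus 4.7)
The plan is to construct $q'$ as the endpoint of a nearly radial geodesic issuing from $q$, using the harmonic approximation $h$ of $p = r^2/2$ provided by Lemmas \ref{harmonic-estimate-annual-1} and \ref{harmonic-estimate-annual-2}. First slightly enlarge the annulus to $A_p(a',b')$ with $a'<a<c<b<b'$ (still satisfying the volume pinching after adjusting $\omega$) and solve (\ref{f-harmonic-radial}) on this enlarged annulus. By the two cited lemmas,
$|h - p|_{C^0(A_p(a,b))} \le \Psi$ and $\int_{A_p(a,b)\cap\mathcal R}(|\nabla h-\nabla p|^2 + |\mathrm{Hess}\,h-g|^2) \le \Psi\cdot\mathrm{vol}(A_p(a,b))$, with $\Psi = \Psi(\omega;a',b',a,b) \to 0$ as $\omega \to 0$.

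Next, apply the segment inequality (Lemma \ref{equ-seg}) with $A_1 = B_q(\eta_1)\cap\mathcal R$, $A_2 = A_p(b-\eta_1,b)\cap\mathcal R$, and integrand $e = |\mathrm{Hess}\,h-g| + |\nabla h - \nabla p|$. Volume comparison makes $\mathrm{vol}(A_1)$ and $\mathrm{vol}(A_2)$ of order $\eta_1^{n}$, so choosing $\eta_1$ a slow positive power of $\Psi$ yields by pigeonhole a pair $x \in A_1$, $q^* \in A_2$, and a minimal geodesic $\gamma\subset A_p(a,b)$ from $x$ to $q^*$ along which $\int_\gamma e =: \Psi_1 \to 0$ as $\omega\to 0$.

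The heart of the argument is to bound $L = d(x,q^*)$. Using $\tfrac{d}{ds}\langle\nabla h(\gamma(s)),\gamma'(s)\rangle = \mathrm{Hess}\,h(\gamma'(s),\gamma'(s))$ and integrating twice along $\gamma$, one derives the quadratic identity
\[
h(q^*) - h(x) = \alpha L + \tfrac12 L^2 + O(L\,\Psi_1),\qquad \alpha := \langle\nabla h(x), \gamma'(0)\rangle.
\]
Since $h(q^*) - h(x) = (b^2-c^2)/2 + O(\Psi)$, this gives the solution $L = -\alpha + \sqrt{\alpha^2 + b^2 - c^2} + o(1)$. To conclude $L \le b-a+\eta/2$ one needs the alignment lower bound $\alpha \ge (2ab-a^2-c^2)/(2(b-a)) - o(1)$, which is obtained by a further averaging via the segment inequality applied to $|\nabla h - r\nabla r|$, exploiting that for a generic pair with $q^*$ near $\partial B_p(b)$ the starting direction $\gamma'(0)$ must make a small angle with $\nabla r(x)$ (the radially outward direction). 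Finally, choose $q' \in \partial B_p(b)$ with $d(q^*,q') \le \eta_1$; the triangle inequality $d(q,q') \le d(q,x) + L + d(q^*,q') \le 2\eta_1 + L$ yields $d(q,q') \le b-a+\eta$ for $\omega,\eta_1$ small.

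The main obstacle is the alignment estimate, namely the lower bound on $\alpha$ needed to turn the quadratic identity into an upper bound on $L$. The key input is the $L^1$-smallness of $|\nabla h - \nabla p|$ along $\gamma$, which forces $\gamma'(0)$ to be nearly radial in an averaged sense; the rigorous derivation follows the Cheeger--Colding scheme of comparing the selected geodesic $\gamma$ with actual radial rays from $p$, combined with first variation of arclength, exactly as in the proofs of Lemma \ref{triangle-lemma} and Lemma \ref{planar}.
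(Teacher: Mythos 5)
There is a genuine gap, and it sits exactly where you flagged "the main obstacle": the alignment lower bound on $\alpha=\langle\nabla h(x),\gamma'(0)\rangle$ is never actually established, and the mechanism you propose cannot establish it. The segment inequality (Lemma \ref{equ-seg}) applied with $A_1=B_q(\eta_1)$ and $A_2=A_p(b-\eta_1,b)$ only guarantees a pair $(x,q^*)$ whose connecting geodesic $\gamma$ carries small integrals of $|\mathrm{Hess}\,h-g|$ and $|\nabla h-\nabla p|$; it says nothing about which $q^*$ you get. Smallness of $\int_\gamma|\nabla h-\nabla p|$ forces $\nabla h\approx r\nabla r$ \emph{along} $\gamma$, but $\alpha\approx r(x)\cos\theta$ where $\theta$ is the angle between $\gamma'(0)$ and $\nabla r(x)$, and $\theta$ is dictated purely by the position of the endpoint $q^*$: for a generic point of the outer annulus (e.g.\ on the far side of $p$) the minimal geodesic from $x$ is nowhere near radial, $\alpha$ can be close to $-c$, and your quadratic identity then only gives $L\le b+c$, which is consistent and useless. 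In the lemmas you invoke for the "rigorous derivation" (Lemmas \ref{triangle-lemma} and \ref{planar}), the near-radiality $d(x,y)\le r(y)-r(x)+\delta$ is a \emph{hypothesis}, not a conclusion; here it is precisely the existence of a nearly radially aligned target on $\partial B_p(b)$ that the lemma asserts, so borrowing it is circular. To make your scheme work you would have to restrict $A_2$ to a region "radially outward from $q$", but identifying such a region with a point of $\partial B_p(b)$ at distance $\le b-a+\eta$ from $q$ is the statement being proved.

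For comparison, the paper's proof is much more elementary and avoids harmonic approximation entirely: argue by contradiction, assuming every $x\in\partial B_p(b)$ satisfies $d(q,x)\ge b-c+\eta$. Then (after moving to a nearby regular point $q_1$) every minimal geodesic from $p$ to $\partial B_p(b)$ must avoid the ball $B_{q_1}(\eta/6)$ — otherwise following such a geodesic from a point near $q$ out to its endpoint would produce a point of $\partial B_p(b)$ within $b-c+\eta$ of $q$. Since $\mathcal R$ is convex these geodesics sweep out the spheres, and the ball $B_{q_1}(\eta/3)$ meets some intermediate sphere $S_p(c+r)$ in a set of definite $(n-1)$-measure; the Laplacian comparison / area-ratio monotonicity (\ref{mono-formula}) then shows $\mathrm{vol}(\partial B_p(b))$ falls short of $(1-\omega)(b/a)^{n-1}\mathrm{vol}(\partial B_p(a))$ once $\omega$ is small, contradicting (\ref{volume-condition-2}). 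You should either adopt this shadowing-plus-volume-comparison argument or supply a genuinely new proof of the alignment step; as written, your proof does not close.
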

 \begin{proof}
 We prove by contradiction. If there is point $q\in \partial B_p(c)$ such that $$d(q,x)\geq b-c+\eta, (\forall x\in \partial B_p(b)),$$ we can choose another point $q_1\in \mathcal{R}\bigcap B_q(\frac{\eta}{3})$ such that
 $$d(q,x)\geq b-c+\frac{\eta}{3}, (\forall x\in \partial B_p(b)).$$
 Since the $\mathcal{R}$ is convex, we know that every minimal geodesic connecting $p$ and $x\in \partial B_p(b)$ has no intersection with $B_{q_1}(\frac{\eta}{6})$. Then there is  some $\frac{\eta}{4}<r<\frac{\eta}{3}$ such that
\begin{align}
\text{vol} B_{q_1}(\frac{\eta}{3})\cap S_p(c+r)\geq c(n){\eta}^{n-1}\text{vol}A_p(a,b).
\end{align} Using  the monotonicity formula  (\ref{mono-formula}),  we get
\begin{align}
\text{vol}S_p(b)\notag &\leq \text{vol}(S_p(a+r)\setminus B_{q_1}(\frac{\eta}{3}))\frac{b^{n-1}}{(a+r)^{n-1}}\notag\\
&\leq (\text{vol}S_p(a+r)-c(n){\eta}^{n-1}\text{vol}A_p(a,b))\frac{b^{n-1}}{(a+r)^{n-1}},
\end{align}
 which is a contradiction.

 \end{proof}

\begin{lem}\label{cone}
Suppose $X$ is a length space and $x^*$ is a point in $X$. Assume that for any $x\in B_{x^*}(1)$ there exists $y\in\partial B_{x^*}(1)$ and a minimal geodesic $\gamma(t)$ from $x^*$ and $y$ containing $x$. Moreover, we assume that for any four points $y_1,y_2,z_1,z_2$ with
$$d(x^*,z_i)=d(x^*,y_i)+d(y_i,z_i) (1\leq i \leq 2),$$
we have
\begin{align}\label{cosineidentity}
\frac{d^2(x^*,y_1)+d^2(x^*,y_2)-d^2(y_1,y_2)}{d(x^*,y_1)d(x^*,y_2)}=
\frac{d^2(x^*,z_1)+d^2(x^*,z_2)-d^2(z_1,z_2)}{d(x^*,z_1)d(x^*,z_2)}.
\end{align}
Then there exists a metric space $Y$ such that
$$B_{x^*}(1)\cong B_o(1)\subset B_o(C(Y)).$$

\end{lem}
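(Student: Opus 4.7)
The plan is to recover the metric cone structure from the apex $x^*$ and a ``unit sphere'' $Y=\partial B_{x^*}(1)$; I describe this as a sequence of steps.

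First, define the polar map $\pi\colon B_{x^*}(1)\setminus\{x^*\}\to Y$ by sending each $x$ to the endpoint $y\in Y$ of a minimal geodesic from $x^*$ through $x$, whose existence is guaranteed by the first hypothesis. To see $\pi$ is well defined, I apply the cosine identity to $(y_1,y_2,z_1,z_2)=(x,x,y,y')$ for two candidate endpoints $y,y'$: the identity reduces to $2=2-d(y,y')^2$, forcing $y=y'$. Put $r(x)=d(x^*,x)$ and define an angle function on $Y$ via
$$d_Y(y_1,y_2)=\arccos\!\left(1-\tfrac{1}{2} d(y_1,y_2)^2\right)\in[0,\pi],$$
which is valid because $d(y_1,y_2)\leq 2$ on the unit sphere.

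Second, derive the cone distance formula by applying the cosine identity to $(y_i,z_i)=(x_i,\pi(x_i))$ for arbitrary $x_1,x_2\in B_{x^*}(1)$:
$$d(x_1,x_2)^2=r(x_1)^2+r(x_2)^2-2r(x_1)r(x_2)\cos d_Y(\pi(x_1),\pi(x_2)).$$
Build the abstract cone $C(Y)=(Y\times[0,\infty))/(Y\times\{0\})$ with apex $o$ and the usual cone metric, and define $\Phi\colon B_{x^*}(1)\to C(Y)$ by $\Phi(x^*)=o$ and $\Phi(x)=(\pi(x),r(x))$ otherwise. The formula above says $\Phi$ is distance-preserving. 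Surjectivity onto $B_o(1)$ uses the hypothesis once more: for each $y\in Y$, the geodesic from $x^*$ to $y$ provides a preimage of $(y,t)$ at every $t\in[0,1]$. Injectivity is immediate from the cone formula.

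Third, one must verify that $d_Y$ is in fact a metric on $Y$, so that $C(Y)$ is honestly a metric space. Symmetry and positivity are automatic; the triangle inequality is the main obstacle. My plan for this step is to use that $X$ is a length space together with the radial splitting $ds_X^2=dr^2+r^2\,d\theta^2$ suggested by the cone formula. For $y_1,y_3\in Y$ with $d(y_1,y_3)<2$, a near-geodesic in $X$ from $y_1$ to $y_3$ automatically avoids $x^*$, and projecting it by $\pi$ yields a path in $Y$ whose angular length is controlled by the $X$-length through the cone formula. Splitting such a path near an arbitrary intermediate $y_2\in Y$ and iterating the cosine identity then gives $d_Y(y_1,y_3)\leq d_Y(y_1,y_2)+d_Y(y_2,y_3)$. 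The delicate point is that a naive application of the triangle inequality for $d_X|_Y$ produces only the weaker bound $\sin(c/2)\leq\sin(a/2)+\sin(b/2)$, so the full length-space structure of $X$, not merely the restriction $d_X|_Y$, must genuinely be used.
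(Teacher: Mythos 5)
Your steps 1 and 2 are correct, and they are precisely the ``direct check'' that the paper leaves to the reader (the paper simply takes $Y$ to be the set of radial minimal geodesics from $x^*$ and asserts the isometry); your use of the identity with $(y_1,y_2,z_1,z_2)=(x,x,y,y')$ for well-definedness of $\pi$, and with $(y_i,z_i)=(x_i,\pi(x_i))$ for the cone distance formula, is exactly right. One small repair: with $Y=\partial B_{x^*}(1)$, surjectivity onto $B_o(1)$ tacitly assumes every boundary point is the endpoint of a radial geodesic, which the hypothesis (stated for points of the ball) does not quite give; this is harmless if you instead take $Y$ to be the image of $\pi$, equivalently the set of radial geodesics as in the paper.

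The genuine gap is your step 3. The triangle inequality $d_Y(y_1,y_3)\le d_Y(y_1,y_2)+d_Y(y_2,y_3)$ must be verified for an \emph{arbitrary} $y_2\in Y$, but a near-geodesic in $X$ from $y_1$ to $y_3$ has no reason to pass anywhere near the ray through $y_2$, so ``splitting such a path near $y_2$'' is not an argument; moreover $Y$ need not be path-connected at all (take $X=\mathbb{R}$, $x^*=0$, so $Y$ is two points at angle $\pi$), hence no approach based on projecting paths into $Y$ can be general. The correct ingredient, which your own step 2 already provides, is that the cone formula holds at \emph{all} radii, so you can test the triangle inequality of $d_X$ against interior points: if $\theta_{13}>\theta_{12}+\theta_{23}$ with $\theta_{12}+\theta_{23}<\pi$, develop the three rays into the Euclidean plane with adjacent angles $\theta_{12}$ and $\theta_{23}$, choose $r_1=r_3\le 1$, and let $r_2$ be the radius at which the segment $P_1P_3$ meets the middle ray (note $0<r_2<\max(r_1,r_3)\le 1$); the corresponding points $x_1,x_2,x_3\in B_{x^*}(1)$ then satisfy, by the cone formula, $d(x_1,x_2)+d(x_2,x_3)=|P_1P_2|+|P_2P_3|=|P_1P_3|<d(x_1,x_3)$, contradicting the triangle inequality in $X$. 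This closes the gap using only that $d$ is a metric on the ball; the length-space structure, which you lean on in step 3, is not actually needed for this lemma.
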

\begin{proof}
Let $Y$ be the set of all minimizing geodesics $\gamma: [0, 1]\rightarrow X$ with $\gamma(0)=x^*$. Then we can check the isometry directly.
\end{proof}

To get the condition of almost volume, we use the following lemma.
\begin{lem}\label{volumecone}
Given $0<a<b$, for any $\omega>0$, there exists $N=N(b,v,\omega)$, such that for any sequence of $r_i (1\leq i \leq N)$ satisfying $ar_i\geq b r_{i+1}$ and $(M,p)\in \mathcal{M}(n,k,v)$, there is some $j\in [1,N]$ such that
$$\frac{vol(A_p(br_j))}{vol(A_p(ar_j))}\geq (1-\omega)\frac{b^{n-1}}{a^{n-1}}.$$
\end{lem}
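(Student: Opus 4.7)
\textbf{Proof plan for Lemma \ref{volumecone}.}
The strategy is a pigeonhole argument on the monotone normalized volume. I read ${\rm vol}(A_p(r))$ in the conclusion as ${\rm vol}(\partial B_p(r))$, since with only a single radius the annulus notation $A_p(a,b) = B_p(b)\setminus \overline{B_p(a)}$ used earlier does not parse, and the target ratio $b^{n-1}/a^{n-1}$ is exactly the Euclidean sphere ratio needed to feed Lemmas \ref{almostplanar} and \ref{distance-appro-3}.

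Set $V(r) := r^{-n}\,{\rm vol}(B_p(r))$ and $A(s) := s^{-(n-1)}\,{\rm vol}(\partial B_p(s))$. By Lemma 2.2, $V$ is non-increasing; the sharper statement that $A$ is non-increasing follows by the same argument (smooth approximation of the center, convexity of $\mathcal R$, and the cut-off from condition ii) of Definition \ref{manifold}). Since $(M,p)\in \mathcal M(v,n)$ gives ${\rm vol}(B_p(1)) \geq v$, monotonicity forces $V(r)\geq v$ for all $r \leq 1$, while approximation through smooth interior points supplies a uniform upper bound $V(r)\leq C(n)$ coming from the classical Bishop comparison. The hypothesis $ar_j \geq br_{j+1}$ arranges the radii into a non-increasing chain $br_1 \geq ar_1 \geq br_2 \geq ar_2 \geq \ldots \geq br_N \geq ar_N$, so by monotonicity of $V$ the values
$$V(br_1) \leq V(ar_1) \leq V(br_2) \leq \ldots \leq V(ar_N)$$
form a non-decreasing sequence inside $[v, C(n)]$. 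Telescoping,
$$\sum_{j=1}^{N}\bigl(V(ar_j) - V(br_j)\bigr) \leq V(ar_N) - V(br_1) \leq C(n) - v.$$
Choosing $N = N(a,b,v,\omega)$ sufficiently large and applying pigeonhole produces an index $j$ for which $V(ar_j) - V(br_j)$ is as small as desired.

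It remains to promote this smallness to near-equality of the sphere ratio $A(br_j)/A(ar_j)$. The identity $rV'(r) = A(r) - nV(r)$ (valid in the smooth part by co-area, and extended to the singular setting via the same cut-off argument) gives
$$V(ar_j) - V(br_j) \,=\, \int_{ar_j}^{br_j}\, \frac{nV(s) - A(s)}{s}\,ds,$$
with non-negative integrand. Smallness of the integral, combined with the monotonicity of both $V$ and $A$ and the inequality $A \leq nV$, forces $A$ to be nearly constant on $[ar_j, br_j]$, in particular $A(br_j)/A(ar_j) \geq 1 - \omega$. Multiplying by $(b/a)^{n-1}$ yields the claim.

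The main obstacle is not the pigeonhole itself, which is purely combinatorial once $V$ is bounded, but the rigorous justification in the singular setting of the two ingredients used in the last step: the monotonicity of the sphere-volume density $A$, and the identity $rV'(r) = A(r) - nV(r)$. Both are classical at smooth points; to import them across $\mathcal S$ one repeats the approximation device from the proof of Lemma 2.2, working on a full-measure slice where $\mathcal H^{n-1}(\partial B_p(s)\cap \mathcal S) = 0$ by Fubini, applying the smooth identities in the convex set $\mathcal R$, and passing to the limit with the cut-offs $\gamma_\epsilon$ from condition ii). Once these singular versions are in hand, the quantitative Bishop--Gromov rigidity step reduces to the same computation as in the smooth Cheeger--Colding setting.
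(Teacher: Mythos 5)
The paper states Lemma \ref{volumecone} without proof, so there is nothing to compare against line by line; your route (pigeonhole on the monotone normalized volume, then conversion to the sphere-area ratio) is indeed the standard Cheeger--Colding argument that the authors are implicitly invoking, and your reading of ${\rm vol}(A_p(r))$ as ${\rm vol}(\partial B_p(r))$ is the right one, since that is exactly the form consumed by Lemmas \ref{harmonic-estimate-annual-1}, \ref{almostplanar} and \ref{distance-appro-3}. The bounds $v\le V(r)\le C(n)$, the disjointness of the intervals $[ar_j,br_j]$, the telescoping and the identity $rV'(r)=A(r)-nV(r)$ (a.e., with $A(s)=s^{-(n-1)}{\rm vol}(\partial B_p(s))$) are all fine, as is the reduction of the singular-set issues to the cut-off and Fubini-slice devices already used in Section 2.

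The gap is in your last step: smallness of $\int_{ar_j}^{br_j}\frac{nV(s)-A(s)}{s}\,ds$ together with monotonicity of $A$ and $A\le nV$ does \emph{not} control $A(br_j)$ from below, because monotonicity only gives $A(br_j)\le A(s)$ for $s<br_j$ --- the wrong direction at the outer endpoint --- and ${\rm vol}(\partial B_p(\cdot))$ can genuinely jump down at a single radius (on round $\mathbb{RP}^n$ it drops by a factor $2$ at half the diameter), so no data at radii below $br_j$ can bound ${\rm vol}(\partial B_p(br_j))$ from below. Abstractly, $A\equiv A_0$ on $(0,br_j)$ with $A(br_j)=A_0/2$ is compatible with every ingredient you list and destroys the conclusion. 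The fix is to secure a buffer \emph{above} $br_j$: run the pigeonhole instead on the intervals $[ar_j,br_{j-1}]$, $2\le j\le N$ (each radius is covered at most twice, and $br_{j-1}\ge \tfrac{b}{a}\,br_j$), so that for some $j$ one has $V(ar_j)-V(br_{j-1})\le\epsilon$. Then sphere-area monotonicity on the buffer gives
\begin{align}
A(br_j)\;\ge\;\frac{{\rm vol}(B_p(br_{j-1}))-{\rm vol}(B_p(br_j))}{\int_{br_j}^{br_{j-1}}s^{n-1}\,ds}\;\ge\; nV(br_j)-\frac{n\,\epsilon}{1-(a/b)^n},\notag
\end{align}
while $A(ar_j)\le nV(ar_j)\le nV(br_j)+n\epsilon$; dividing and using $V\ge v$ yields $A(br_j)/A(ar_j)\ge 1-\Psi(\epsilon\,|\,n,a,b,v)$, which is the desired inequality after choosing $N$ (hence $\epsilon$) in terms of $\omega$. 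With this modification your argument is complete; as written, the endpoint claim ``$A$ is nearly constant on $[ar_j,br_j]$, in particular $A(br_j)/A(ar_j)\ge 1-\omega$'' does not follow from the stated ingredients.
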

\begin{proof}[Proof of Proposition \ref{metriccone}]
We need to verify the conditions in Lemma \ref{cone} for $$(T_{x^*}X, \omega_x, x^*)=\lim_{i\rightarrow \infty}(X,\frac{d}{r_i^2},x^*).$$ Let $x\in B_{x^*}(1,\omega_{x^*})$ with $d(x^*,x)=2a>0$. For any $\epsilon>0$, let $\omega=\omega(\epsilon)$ be the constant determined in Lemma \ref{distance-appro-3}. By Lemma \ref{volumecone}, we know that there is a subsequence of $r_{j_i}\rightarrow 0$ such that
$$\frac{vol(A_p(r_{j_i}))}{vol(A_p(ar_{j_i}))}\geq (1-\omega)\frac{1}{a^{n-1}}.$$
So by Lemma \ref{distance-appro-3}, there is a point $q_i\in B_{p_i}(1)$ with $d(q_i,p_i)\leq 1-a+\eta$. Denoting the limit of $q_i$ by $y_\eta$, we have $1-a\leq d(x,y_\eta)\leq 1-a+\eta$. Since $\eta$ is arbitrary, we will find a point $y$ with $d(x,y)=1-a.$ Moreover, applying Lemma \ref{almostplanar} to $y_1,z_1,z_2 $, and $y_1,y_2,z_2 $, we know that
$$d^2(z_1,z_2)r(y_1)+d^2(x^*,z_2)(r(z_1)-r(y_1))-d^2(y_1,z_2)r(z_1)=r(y_1)r(z_1)(r(z_1)-r(y_1))$$
and
$$d^2(y_1,z_2)r(y_2)+d^2(x^*,y_1)(r(z_2)-r(y_2))-d^2(y_1,y_2)r(z_2)=r(y_2)r(z_2)(r(z_2)-r(y_2)).$$
Combined these two identity, we get (\ref{cosineidentity}).
\end{proof}
\section{Volume convergence}
In this section, we will prove a local version of volume convergence as in \cite{Co}.
Let $M$ be a Riemannian manifold with singularity and ${\rm Ric}(g)\,\geq\, 0$ in $\mathcal R$.
\begin{prop}\label{volume-estimate}
Given $\epsilon>0$, there exist $R=R(\epsilon,n)>1$  and $\delta=\delta(\epsilon,n)$ such that if $B_p(R)\subset M$ satisfies

 \begin{align}{\rm d}_{GH}(B_p(R),B_0(R))<\delta,\end{align}
  then we have
\begin{align}
{\rm vol}(B_p(1))\geq (1-\epsilon){\rm vol}(B_0(1)).
\end{align}
\end{prop}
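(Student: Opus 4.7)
The plan is to adapt Colding's original volume-continuity argument to the singular setting, using the harmonic approximation, Hessian estimates, and almost-planar inequalities already established in the preceding sections. The overall strategy proceeds in three steps: build approximate Euclidean coordinates on $B_p(1)$ via harmonic replacements of distance functions, use these to define a map $F$ to $\mathbb{R}^n$ whose pulled-back Euclidean metric is $L^1$-close to $g$, and then run an area-formula argument on the regular part $\mathcal R$ (whose complement has $\mathcal H^n$-measure zero) to transfer volume estimates from $B_0(1)$ back to $B_p(1)$.

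Concretely, I would first use the Gromov--Hausdorff approximation $B_p(R) \to B_0(R)$ to select, for each $i = 1, \ldots, n$, points $q_i^\pm \in \mathcal R$ at distance approximately $R/2$ from $p$ that lie near the GH-images of $\pm (R/2) e_i$. Since $\mathcal R$ is dense in $M$, this is possible, and the choice forces $d(p, q_i^+) + d(p, q_i^-) - d(q_i^+, q_i^-) < \Psi(\delta, 1/R)$. Let $b_i^\pm(x) = d(q_i^\pm, x) - d(p, q_i^\pm)$ and let $h_i$ be the harmonic replacement of $b_i^+$ on $B_p(1)$ provided by condition iii) of Definition \ref{manifold}. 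By Lemma \ref{harmonic-estimate} we obtain simultaneously $\|h_i - b_i^+\|_{L^\infty} < \Psi$, the $L^2$-gradient bound for $\nabla h_i - \nabla b_i^+$, and the integral Hessian estimate on $B_p(1/2)$.

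Setting $F = (h_1, \ldots, h_n) : B_p(1/2) \cap \mathcal R \to \mathbb{R}^n$, I would combine the gradient closeness of distinct $h_i, h_j$ with the segment inequality Lemma \ref{equ-seg} applied to the Bochner identity, exactly as in Lemma \ref{almostplanar1}, to deduce that the $L^1$-average of $|\langle \nabla h_i, \nabla h_j\rangle - \delta_{ij}|$ on $B_p(1/2) \cap \mathcal R$ is bounded by $\Psi$. Hence $|\det DF|$ is within $\Psi$ of $1$ on a set of almost full measure. On the other hand, Lemma \ref{almostplanar1}, applied to configurations inside $B_p(1/2)$, gives that $F$ is a $\Psi$-Gromov--Hausdorff approximation onto its image, and in particular the image covers $B_0(1 - \Psi) \subset \mathbb{R}^n$ up to a set of small measure. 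The change-of-variables formula on $\mathcal R$, together with Bishop's upper bound $\mathrm{vol}(B_p(1)) \leq \mathrm{vol}(B_0(1))$ (a consequence of the monotonicity lemma in Section 2), then yields
\begin{equation*}
\mathrm{vol}(B_p(1)) \geq (1 - \Psi(\delta,1/R))\,\mathrm{vol}(B_0(1))
\end{equation*}
after choosing $R$ large and $\delta$ small depending on $\epsilon$.

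The main obstacle is justifying the area-formula step in the singular setting. On $\mathcal R$ the map $F$ is smooth, so the classical area formula applies, but a priori $F$ could fold over itself on a set of positive measure, or the preimages could accumulate against $\mathcal S$. The first difficulty is handled by applying the triangle-version segment inequality Lemma \ref{segment-inequ1} to the Hessian control, which shows that for most pairs $(x,y) \in (B_p(1/2)\cap\mathcal R)^2$ the restriction of $F$ to the geodesic $\gamma_{xy}$ is a near-isometry, so the GH-approximation property upgrades to almost-injectivity on a set of large measure. The second difficulty is handled exactly by the cut-off function $\gamma_\epsilon$ from condition ii) of Definition \ref{manifold}: all relevant integrals are carried against $\gamma_\epsilon$ and we pass to the limit $\epsilon \to 0$, as in the integration-by-parts machinery of Lemmas \ref{part1} and \ref{part2}, so that $\mathcal S$ contributes nothing in the limit thanks to $\mathcal H^n(\mathcal S) = 0$.
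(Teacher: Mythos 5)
Your setup — picking points near the images of $\pm R e_i$, taking harmonic replacements $h_i$ of the distance functions, invoking Lemma \ref{harmonic-estimate}, and deducing the $L^1$ near-orthonormality $\int|\langle\nabla h_i,\nabla h_j\rangle-\delta_{ij}|<\Psi$ via the segment inequality as in Colding — coincides with the paper's proof up to the construction of $h=(h_1,\dots,h_n)$. The gap is the sentence asserting that, because $F$ is a $\Psi$-Gromov--Hausdorff approximation, ``the image covers $B_0(1-\Psi)$ up to a set of small measure.'' That is precisely the crux, and it does not follow: a GH approximation only makes $h(B_p(1))$ $\Psi$-dense in $B_0(1-\Psi)$, and a $\Psi$-dense set can have arbitrarily small Lebesgue measure. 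Since the statement to be proved is exactly that collapsing does not occur, you cannot assume the image has large measure; indeed the area formula gives $\mathcal{L}^n\bigl(h(B_p(1)\cap\mathcal R)\bigr)\le\int_{B_p(1)\cap\mathcal R}|\det Dh|\le(1+\Psi)\,\mathrm{vol}(B_p(1))$, so if $\mathrm{vol}(B_p(1))$ were small the image would automatically be small in measure while still being dense. Your discussion of ``folding'' and almost-injectivity addresses a non-issue for the lower bound (multiplicity only strengthens the inequality $\int|\det Dh|\ge\mathcal{L}^n(\mathrm{image})$); what is actually needed is genuine surjectivity of $h$ onto $B_0(1-\Psi)$, and your proposal contains no mechanism for it.

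The paper supplies this missing step by a degree argument following [Ch]: after a small modification so that $h$ maps $(B_p(1),\partial B_p(1))$ into $(B_0(1-\Psi),\partial B_0(1-\Psi))$, it uses a Vitali covering argument together with the segment inequality (Lemma \ref{equ-seg}) to find a point $x\in B_p(1/8)\cap\mathcal R$ and, for each $y$ with $d(x,y)=r$, nearby points $x^*,y^*$ such that $\langle\nabla h_i,\nabla h_j\rangle(x^*)$ is pointwise almost $\delta_{ij}$ and the Hessians of the $h_i$ are small along the geodesic $\gamma_{x^*y^*}$; this yields $|h(x^*)-h(y^*)|^2=(1+\Psi)r^2$, hence $h(x)\ne h(y)$ for all nearby $y$, while far-away preimages are excluded by the GH-approximation property. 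Thus $h(x)$ has a unique preimage, the degree of $h$ is $1$, and nonzero degree forces $B_0(1-\Psi)\subset h(B_p(1))$; only then does the change-of-variables estimate you wrote (with the Jacobian controlled by the near-orthonormality and the gradient bound of Lemma \ref{gradient-esti}) give $\mathrm{vol}(B_p(1))\ge(1-\epsilon)\mathrm{vol}(B_0(1))$. Without this surjectivity argument, or an equivalent substitute, your proof does not close; the remaining ingredients of your outline (harmonic replacement, Lemma \ref{harmonic-estimate}, the Colding-style $L^1$ estimates, and the use of the cut-off $\gamma_\epsilon$ to neutralize $\mathcal S$) do agree with the paper.
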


\begin{proof}
We need to construct a Gromov-Hausdorff approximation  map by  using harmonic  functions  constructed  in Section 2.  Choose
$ n $ points $q_i$ in $B_p(R)$  which is close to $Re_i$ in $B_0(R)$, respectively.
Let  $l_i(q)=d(q,q_i)-d(q_i,p)$ and $h_i$ a solution of
 \begin{align}\Delta h_i=0, ~\text{in}~B_1(p), \notag\end{align}
with  $ h_i=l_i$  on  $\partial B_1(p)\cap \mathcal R$.
Then by Lemma \ref {harmonic-estimate},  we have
\begin{align}
 \nonumber \frac{1}{\text{vol }(B_p(1))}\int_{B_p(1)}|\text{Hess }h_i|^2 <\Psi(1/R,\delta).
\end{align}
By using  an  argument  in \cite{Co},  it follows
\begin{align}\label{orthorgonal-4}
\frac{1}{\text{vol }(B_p(1))}\int_{B_p(1)\cap \mathcal R}|\langle\nabla h_i,\nabla h_j\rangle-\delta_{ij}| <\Psi(1/R,\delta).
 \end{align}
Define a map by  $h=(h_1,h_2,...,h_n)$. It is easy to see that  the map $h$ is a $\Psi(\frac{1}{R}, \delta)$ Gromov-Hausdorff approximation to $B_p(1)$  by using the estimate (\ref{c0-h})  in Lemma \ref{harmonic-estimate}.
  Since $h$
 maps  $\partial B_p(1)$  nearby  $\partial B_0(1)$ with distance  less than $\Psi$,  by a small modification  to $h$ we may assume that
 \begin{align}
  \nonumber h:(B_p(1),\partial B_p(1))\longrightarrow(B_0(1-\Psi),\partial B_0(1-\Psi)).
  \end{align}

Now we can use the same degree  argument  in \cite{Ch}  to show that the image of $h$ contains $B_0(1-\Psi)$ .
  By using Vitali covering lemma,   there exists a point $x$ in $B_p(\frac{1}{8})\cap \mathcal R
  $ such that for any $r$  less than $\frac{1}{8}$ it holds
  \begin{align}\label{almost-hassian-zero-2}
  \frac{1}{\text{vol }(B_x(r))}\int_{B_x(r)\cap \mathcal R}|\text{Hess }h_i|<\Psi
  \end{align}
  and
  \begin{align}\label{orthogonal-2}
   \frac{1}{\text{vol }(B_x(r))}\int_{B_x(r)\cap \mathcal R}|\langle\nabla h_i,\nabla h_j\rangle-\delta_{ij}|<\Psi.
  \end{align}
    Let  $\eta=\Psi^{\frac{1}{2n+1}}$.  For any $y$ with  $d(x,y)=r<\frac{1}{8}$,
  applying Lemma \ref{segment-inequ}, we get from (\ref{almost-hassian-zero-2}),
  \begin{align}
&\int_{B_x(\eta r)\times B_y(\eta r)}\int_{\gamma_{zw}}|\text{Hess }h_i(\gamma',\gamma')|\notag\\
 &< r\big(\text{vol }(B_x(\eta r))+\text{vol }(B_y(\eta r))\big)\text{vol }(B_x(r)) \Psi\notag.&
  \end{align}
It follows that
 \begin{align}
&\int_{B_x(\eta r)\cap \mathcal R} \left(  Q(r,\eta) \int_{ B_y(\eta r)\cap \mathcal R}  \int_{\gamma_{zw}} \Sigma_{i=1}^n |\text{Hess }h_i(\gamma',\gamma')|
+  | \langle\nabla h_i,\nabla h_j\rangle-\delta_{ij}|\right)
\notag\\
 &<\text{vol }   (B_x(\eta r)) \Psi\notag,&
  \end{align}
 where $Q(r,\eta)=\frac{\text{vol}B_x(\eta r)}{ r\big(\text{vol }(B_x(\eta r))+\text{vol }(B_y(\eta r))\big)\text{vol}B_x(r)}$.
  Consider
  $$  Q(r,\eta) \int_{ B_y(\eta r)\cap \mathcal R} \int_{\gamma_{zw}}\Sigma_{i=1}^n\text{Hess } |h_i(\gamma',\gamma')| + | \langle\nabla h_i,\nabla h_j\rangle-\delta_{ij}|$$
  as a function of $z\in B_x(\eta r)$.   Then  one sees that  there exists a  point $x^*\in B_x(\eta r)\cap \mathcal R$ such that
 \begin{align}\label{orthogonal-3}
 |\langle\nabla h_i,\nabla h_j\rangle(x^*)-\delta_{ij}|< \Psi
 \end{align}
and
 \begin{align}\label{hession-small-geodesic}
\Sigma_{i=1}^n\int_{B_y(\eta r)\cap \mathcal R
} \int_{\gamma_{x^*w}}|\text{Hess }h_i(\gamma',\gamma')| < r\text{vol }(B_x(r))\eta^{-n}\Psi.
 \end{align}
 Moreover  by  (\ref{hession-small-geodesic}),   we  can find  a point  $y^*\in B_y(\eta r)$ such that
\begin{align}\label{almost-hessian-zero-3}
  \Sigma_{i=1}^n\int_{\gamma_{x^*y^*}}|\text{Hess }h_i(\gamma',\gamma')| <\eta r.
\end{align}

By a direct calculation with help of   (\ref{orthogonal-3}) and (\ref{almost-hessian-zero-3}),    we get
 \begin{align}\label{dist-appr-1}
 (h(x^*)-h(y^*))^2=(1+\Psi^{\frac{1}{2n+1}})r^2.
 \end{align}
 This shows that $ h(x)\neq h(y)$ for any $y$ with $d(y,x)\leq \frac{1}{8}$.  On the other hand, for any $y$ with $d(y,x)\geq \frac{1}{8}$,  it is clear that $h(x)\neq h(y)$ since  $h$ is a $\Psi$ Gromov-Hausdorff approximation.
 Thus  we prove that   the pre-image of $h(x)$ is unique.  Therefore the degree of $h$ is  $1$,  and consequently,
  $B_0(1-\Psi)\subset h(B_p(1))$.     The lemma  is proved  because the volume of $B_p(1)$ is almost same to one of $h(B_p(1))$
  by  (\ref{orthorgonal-4}).
\end{proof}
\section{Structure of limit spaces}
\subsection{Real case}
Let $(M_i,g_i)$ be a sequence of Riemannian manifolds with singularity in $\mathcal M(V,D,n)$ and $(M_i,g_i)\rightarrow (X,d)$.
\begin{thm}
Every tangent cone of $X$ is a metric cone. There is a decomposition of $X$ into $\mathcal R\cup \mathcal S$ and $\mathcal S=\mathcal S_{2n-2}$. Moreover, we have $\dim \mathcal S_k\leq k$.
\end{thm}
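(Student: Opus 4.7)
The first assertion—every tangent cone of $X$ is a metric cone—is exactly Proposition~\ref{metriccone}, applied after observing that $(M_i,g_i)\in\mathcal M(V,D,n)$ together with the Bishop--Gromov monotonicity proved in Section~2 yields a uniform noncollapsing bound $\mathrm{vol}(B_{p_i}(1))\ge v>0$. The decomposition $X=\mathcal R\cup\mathcal S$ is automatic from Definition~\ref{singular-type}. What remains is to establish the two structural statements: (i) no codimension-one singularities, so $\mathcal S = \mathcal S_{2n-2}$ (which in the real $n$-dimensional setting means $\mathcal S=\mathcal S_{n-2}$), and (ii) the dimension bound $\dim\mathcal S_k\le k$.

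My plan for (i) is a contradiction argument combining Proposition~\ref{split} and Proposition~\ref{volume-estimate}. Suppose $x\in\mathcal S$ has a tangent cone that isometrically splits off $\mathbb R^{n-1}$. Iterating Proposition~\ref{split} along $n-1$ orthogonal lines in the $\mathbb R^{n-1}$ factor, the tangent cone takes the form $T_xX\cong \mathbb R^{n-1}\times C(Z)$ with $C(Z)$ a one-dimensional metric cone. Such a $C(Z)$ is either the full line $\mathbb R$ (cross-section two points at distance $\pi$), a half-line (cross-section a single point), or a wedge of rays through the origin; only the first option has the full Euclidean asymptotic volume ratio, while the other two strictly reduce it. On the other hand, Proposition~\ref{volume-estimate} combined with the volume monotonicity of Section~2 says that the asymptotic volume ratio at $x$ matches that of the tangent cone; and the existence of regular points in any neighborhood, forced by the noncollapsing hypothesis together with Proposition~\ref{volume-estimate} applied at small scales, shows the Euclidean ratio is actually attained in the limit. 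A density comparison then rules out the half-line and wedge options, leaving $C(Z)=\mathbb R$ and hence $T_xX=\mathbb R^n$, contradicting $x\in\mathcal S$.

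For (ii) I would run the standard Cheeger--Colding induction, which requires two ingredients, both already in place: the metric-cone structure of tangent cones (Proposition~\ref{metriccone}), and the \emph{cone-splitting principle}---if the tangent cone at $x$ splits off $\mathbb R^j$, then so does every tangent cone at every point of every iterated tangent cone of $x$. The latter follows by applying Proposition~\ref{split} to limits of rescaled spaces. A dyadic Vitali covering argument then proceeds by covering $\mathcal S_k$ at scale $2^{-m}$ by a controlled number of balls whose rescalings lie close to $\mathbb R^k\times C(Z_\alpha)$, and iterating to bound $\mathcal H^{k+\epsilon}(\mathcal S_k)$. The main obstacle I anticipate is the codimension-one step of (i): in the smooth Riemannian setting, noncollapsing plus absence of boundary rules out half-line and wedge one-dimensional cones automatically, but here the needed quantitative density comparison relies on the convexity of the regular part (item~i of Definition~\ref{manifold}) and the cutoff functions provided by item~ii, used together with Proposition~\ref{volume-estimate} on balls whose rescalings are nearly Euclidean. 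Turning the qualitative cone-splitting picture into the sharp density statement is the core technical point.
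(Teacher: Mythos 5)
Your overall skeleton is the same as the paper's: noncollapsing from the monotonicity of Section 2 (so $\mathrm{vol}(B_{p}(1))\ge V/D^{n}$), Proposition \ref{metriccone} for the metric-cone property, and then the Cheeger--Colding stratification machinery for $\mathcal S=\mathcal S_{n-2}$ and $\dim\mathcal S_k\le k$ (the paper simply cites \cite{CC2} for this last step, and your dimension-reduction sketch for $\dim\mathcal S_k\le k$ is the standard argument and is fine). The problem is in the one step you actually try to fill in: ruling out the half-line factor, i.e.\ a tangent cone $\mathbb R^{n-1}\times\mathbb R_{\ge 0}$. Your mechanism is that ``the existence of regular points in any neighborhood \dots shows the Euclidean ratio is actually attained in the limit'' at $x$, and then a density comparison excludes the half-line. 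That claim is false. The volume density $\Theta(x)=\lim_{r\to0}\mathrm{vol}(B_r(x))/(\omega_n r^n)$ is the supremum over $r$ of the continuous functions $\mathrm{vol}(B_r(\cdot))/(\omega_n r^n)$ (by Bishop--Gromov monotonicity), hence only \emph{lower} semicontinuous: regular points accumulating at $x$ give $1\ge\Theta(x)$ and nothing more. Singular points with a dense regular set around them and density strictly less than $1$ are exactly what one expects here (a two-dimensional flat cone of angle $<2\pi$, or a conic K\"ahler--Einstein limit along the divisor, already shows the intermediate claim fails). So your density comparison does not exclude $C(Z)=\mathbb R_{\ge 0}$, whose vertex density is $1/2$, a perfectly admissible value a priori. (As a side remark, the wedge of $k\ge3$ rays is excluded because its density \emph{exceeds} $1$, contradicting Bishop--Gromov; it does not ``strictly reduce'' the ratio as you wrote, though that case is the easy one.)

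Excluding the half-space is the genuinely hard, boundaryless-input step of \cite{CC2}: one uses the harmonic almost-splitting maps (here built from Lemma \ref{harmonic-estimate} and the integral Hessian bounds), volume convergence (Proposition \ref{volume-estimate} and its consequences), and a slice/degree argument exploiting that the approximating spaces have no boundary --- in this paper's setting, that the level sets of the splitting maps in $\mathcal R_i$ are $1$-manifolds without boundary, together with the convexity of $\mathcal R_i$ from Definition \ref{manifold} --- to show that if $B_{r}(y_i)$ were close to a half-ball, the fibers would have to continue past the would-be boundary face, forcing $\mathrm{vol}(B_r(y_i))$ close to the full Euclidean value and contradicting $\mathcal H^{n}$-convergence to $\tfrac12\omega_n r^{n}$. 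Some argument of this type (or an appeal to \cite{CC2} after verifying its hypotheses transfer to the singular setting, which is what the paper does) is needed; as written, your step (i) has a genuine gap, while the rest of the proposal is consistent with the paper's route.
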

\begin{proof}
For any $(M,g)\in \mathcal M(V,D,n)$, by the volume comparison, we have
$$\frac{vol(B_p(1))}{V}\geq \frac{vol(B_p(1))}{\mathcal H^n(M)}\geq \frac{1}{D^n}.$$
Now by Proposition \ref{metriccone}, we know that every tangent cone is a metric cone. By the argument in \cite{CC2}, we get $\mathcal S=\mathcal S_{2n-2}$ and $\dim \mathcal S_k\leq k$.
\end{proof}
From this theorem and Proposition \ref{volume-estimate}, we have
\begin{prop}
Denote by $\mathcal H^n$ the $n$-dimensional Hausdorff measure, then
$$\lim_{i\rightarrow \infty} \mathcal H^n(M_i)=\mathcal H^n(X).$$
\end{prop}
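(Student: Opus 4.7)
The plan is to adapt Colding's proof of volume continuity from \cite{Co} to our singular setting, using Proposition \ref{volume-estimate} as the key local input. Since every $M_i$ has diameter $\leq D$ and $\mathrm{Ric}\geq 0$ on $\mathcal R_i$, the Bishop--Gromov monotonicity established in Section~2 yields a uniform bound $\mathcal H^n(M_i)\leq\omega_n D^n$. Fixing $\epsilon_i$-Gromov--Hausdorff approximations $\phi_i\colon M_i\to X$, the pushforward measures $\mu_i:=(\phi_i)_\#\mathcal H^n|_{M_i}$ are uniformly bounded Radon measures on the compact space $X$, and along a subsequence they converge weakly to a Radon measure $\mu$ on $X$. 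Since $X$ is compact, $\mu_i(X)\to\mu(X)$, so the proposition reduces to showing $\mu=\mathcal H^n|_X$.

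To identify $\mu$ I would establish matching two-sided density bounds. The Bishop--Gromov monotonicity, applied at points of $\mathcal R_i$ where the infinitesimal volume density equals $\omega_n$, yields the global inequality $\mathcal H^n(B_p(r))\leq\omega_n r^n$; by density of $\mathcal R_i$ in $M_i$ this extends to every $p\in M_i$, and passing to the limit gives $\mu(B_x(r))\leq\omega_n r^n$ for every $x\in X$ and $r>0$. Since the preceding structure theorem forces $\mathcal H^n(\mathcal S)=0$, covering $\mathcal S$ by balls with $\sum r_j^n$ arbitrarily small yields $\mu(\mathcal S)=0$ as well. For the matching lower bound, fix a regular point $x\in\mathcal R$; by definition some tangent cone at $x$ is $\mathbb R^n$, so given $\epsilon>0$ with corresponding $R(\epsilon),\delta(\epsilon)$ from Proposition \ref{volume-estimate}, one can find scales $r_k\downarrow 0$ such that $B_x(Rr_k)$ is $\delta r_k$-Gromov--Hausdorff close to $B_0(Rr_k)\subset\mathbb R^n$. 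Lifting this approximation to $M_i$ for large $i$ and applying Proposition \ref{volume-estimate} gives $\mathcal H^n(B_{p_i}(r_k))\geq(1-\epsilon)\omega_n r_k^n$, hence
\[
\mu(B_x(r_k))\geq(1-\epsilon)\omega_n r_k^n.
\]
Combined with the upper bound, $\mu(B_x(r_k))/(\omega_n r_k^n)\to 1$ at every $x\in\mathcal R$ along a suitable sequence of scales.

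Finally, $X$ inherits uniform volume doubling from the $M_i$ via Bishop--Gromov, so a standard Vitali differentiation theorem applies and identifies $\mu$ through its density with respect to $\mathcal H^n|_X$ at $\mathcal H^n$-a.e.\ point. The analogous density computation for $\mathcal H^n|_X$ itself, using only that the tangent cone at a regular point is Euclidean, yields density $1$ at every regular point, so $\mu$ and $\mathcal H^n|_X$ agree on $\mathcal R$; with $\mu(\mathcal S)=\mathcal H^n(\mathcal S)=0$ this forces $\mu=\mathcal H^n|_X$ and completes the proof. The main obstacle I foresee is the careful bookkeeping needed to pass between weak convergence of the $\mu_i$ (which yields $\mu(U)\leq\liminf\mu_i(U)$ only on open sets, and the reverse on closed sets) and pointwise density estimates on $X$, compounded by the fact that the Gromov--Hausdorff approximations shift points by $\epsilon_i$. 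Absorbing these errors into slightly perturbed radii is routine but must be handled carefully so as not to lose the factor $(1-\epsilon)$ in the crucial lower bound on $\mu(B_x(r_k))$.
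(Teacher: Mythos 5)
Your overall strategy (weak limit $\mu$ of the pushed-forward volume measures, Bishop--Gromov upper bound $\mu(B_x(r))\leq\omega_n r^n$, vanishing of $\mu$ and $\mathcal H^n$ on $\mathcal S$, lower density bound at regular points via Proposition \ref{volume-estimate}, then identification $\mu=\mathcal H^n|_X$) is exactly the Colding/Cheeger--Colding route that the paper implicitly invokes — the paper itself offers no written proof, only the phrase ``From this theorem and Proposition \ref{volume-estimate}''. Most of your steps are sound: the upper bound, $\mu(\mathcal S)=0$, and the inequality $\mathcal H^n(A)\leq\mu(A)$ for $A\subseteq\mathcal R$ (fine covers by balls carrying $\mu$-mass at least $(1-\epsilon)\omega_n r^n$, plus doubling of $\mu$) go through with the bookkeeping you acknowledge.

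The genuine gap is in the last step, the direction $\mu\leq\mathcal H^n$ on $\mathcal R$, which you dispose of by asserting that ``the analogous density computation for $\mathcal H^n|_X$ itself, using only that the tangent cone at a regular point is Euclidean, yields density $1$ at every regular point.'' That justification fails: Gromov--Hausdorff closeness of $B_x(r)$ to a Euclidean ball gives no lower bound whatsoever on $\mathcal H^n(B_x(r))$ (a finite $\delta$-net of $B_0(r)$ is $\delta$-GH-close to it and has $\mathcal H^n=0$), so knowing a tangent cone is $\mathbb R^n$ does not by itself give $\mathcal H^n(B_x(r))\geq(1-\epsilon)\omega_n r^n$. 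Nor does the abstract density/differentiation machinery rescue this direction: from $\mu(B_x(r))\leq\omega_n r^n$ alone one only gets $\mu(A)\leq 2^n\mathcal H^n(A)$, since in a general metric space a covering set of diameter $d$ sits only in a ball of radius $d$, and Besicovitch-type theorems that remove the $2^n$ are unavailable. The sharp inequality $\mu\leq\mathcal H^n$ is precisely the nontrivial half of volume convergence in the limit space, and it requires a genuinely metric (multiplicative) input: namely that the harmonic almost-splitting maps $h=(h_1,\dots,h_n)$ of Section 5, combined with the segment inequality (Lemma \ref{equ-seg}) and a maximal-function argument, are $(1\pm\epsilon)$-bi-Lipschitz on a subset of $B_x(r)$ of nearly full $\mu$-measure, whence $\mathcal H^n(B_x(r))\geq(1-\Psi)\omega_n r^n$ at $\mu$-a.e.\ regular point; this is the content of the argument in [CC2, Theorem 5.9] (or [Co]). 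Without supplying that step (or an explicit appeal to it), your proof establishes only $2^{-n}\mathcal H^n\leq\mu$-type comparability, not the equality $\lim_i\mathcal H^n(M_i)=\mathcal H^n(X)$.
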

\subsection{K\"ahler case}
Now let $M^n$ be a K\"ahler manifold, $\omega$ be a conic K\"{a}hler-Einstein metric on $M$:
$${\rm Ric}(\omega)\,=\,t\omega\,+\,2\pi\,\sum_{i=1}^k(1-\beta^i)D_i,$$
where $t$ is a positive constant and $D_i$ are simple normal crossing divisors in $M$. For any $\delta>0, V>0$, denote by $\mathcal M(n,k,\delta,V)$ the following set:
\begin{align}
\{(M^n,\omega)\,|\,\omega \text{a conic K\"{a}hler-Einstein metric with } t\in [\delta, \delta^{-1}], \int_M\omega^n\geq V\} \notag
\end{align}
\begin{lem}
Any manifold $M$ with a conic K\"ahler-Einstein metric $\omega$ is a Riemannian manifold with singularity.
\end{lem}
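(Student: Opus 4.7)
The plan is to verify the three axioms of Definition \ref{manifold} for the pair $(\mathcal R,\mathcal S)=(M\setminus D,\,D)$, with the relevant real dimension being $2n$. As a preliminary, the local volume density of $\omega$ is comparable to $\prod_{i=1}^{d}|z_i|^{2(\beta_i-1)}$ with $\beta_i\in(0,1)$, which is locally integrable across $D$, so $\mathcal H^{2n}(D)=0$ and the zero-measure condition holds automatically.

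For (i), smoothness of $\mathcal R$ is clear from the definition of a conic K\"ahler metric. The substantive point is that $\mathcal R$ is geodesically convex, i.e. every minimizing geodesic of $(M,d)$ with endpoints in $\mathcal R$ stays inside $\mathcal R$. I would deduce this from the local structure plus the polyhomogeneous regularity results for conic K\"ahler-Einstein metrics of Jeffres-Mazzeo-Rubinstein and their successors. Near a smooth point of $D_i$, after a biholomorphic change of coordinates, the metric is uniformly comparable (and asymptotic) to the product of a two-dimensional flat cone of total angle $2\pi\beta_i<2\pi$ with a Euclidean factor, and in any such two-dimensional cone minimal curves between non-apex points never pass through the apex. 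Normal-crossing points are treated factor by factor, and a barrier argument using the Laplacian comparison (Lemma \ref{laplace}) on $\mathcal R$ plus the first-variation formula propagates the statement globally.

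For (ii), I would take a logarithmic profile of the distance $\rho(\cdot)=d(\cdot,D)$. Near a smooth point of $D_i$ one has $\rho\asymp s_i:=|z_i|^{\beta_i}/\beta_i$, and in the model coordinates the conic metric becomes $ds_i^2+\beta_i^2 s_i^2\,d\theta_i^2+\dots$, so $|\nabla\log\rho|^2\lesssim\rho^{-2}$ while the $r$-tube around $D$ has $2n$-volume $O(r^2)$. Setting
\begin{equation*}
\gamma_\epsilon=\chi\!\left(\frac{\log(1/\rho)}{\log(1/\epsilon^2)}\right)
\end{equation*}
for a fixed smooth cutoff $\chi$ on $\mathbb R$ with $\chi=1$ on $[1,\infty)$ and $\chi=0$ on $(-\infty,1/2]$, I obtain $\int_{\mathcal R}|\nabla\gamma_\epsilon|^2\lesssim 1/\log(1/\epsilon)\to 0$, as required. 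A partition of unity handles the normal-crossing loci.

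For (iii), I would solve the Dirichlet problem by Perron's method on $U\cap\mathcal R$. Uniqueness comes from the maximum principle (Lemma \ref{max-principle}), which only uses axioms (i) and (ii), now in place for the conic K\"ahler-Einstein metric. Barriers at points of $\partial U\cap\mathcal R$ come from the smooth theory, while the functions $G_L$ built in the proof of Lemma \ref{cut-off} together with the logarithmic cutoff from step (ii) provide effective barriers across $D$, so the Perron solution is bounded and continuous on $\overline U\cap\mathcal R$. The interior gradient estimate (Lemma \ref{gradient-esti}) then yields local Lipschitz regularity. The main obstacle of the whole argument is the convexity of $\mathcal R$ in (i): it encodes the nontrivial geometric content that minimizing geodesics of the conic K\"ahler-Einstein metric avoid the divisor, and it is precisely what legitimizes all of the PDE analysis of Sections 2--4 on $\mathcal R$; once (i) is established, (ii) is a routine logarithmic capacity computation and (iii) is classical elliptic theory on the smooth regular part.
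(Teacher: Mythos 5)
The decisive issue is your treatment of the convexity in part (i), and there your argument has a genuine gap. The paper does not prove convexity of $\mathcal R=M\setminus D$ at all: it simply invokes Theorem 1.1 of Datar \cite{D}, which is exactly the statement that the regular set of a conical K\"ahler--Einstein metric is geodesically convex. Your sketch would not substitute for that citation. The hypothesis $C^{-1}\omega_{cone}\le\omega\le C\omega_{cone}$ is only a quasi-isometry, and quasi-isometries do not preserve minimizing curves, so the fact that geodesics in the two-dimensional \emph{model} cone of angle $2\pi\beta_i<2\pi$ avoid the apex cannot be transferred to $\omega$ by comparability alone; the known proofs (Datar's, and the related Chen--Donaldson--Sun-type arguments) require the refined polyhomogeneous asymptotics and a genuine shortening/perturbation argument, none of which appears in your outline beyond the phrase ``treated factor by factor.'' Moreover, your proposal to ``propagate the statement globally'' via the Laplacian comparison of Lemma \ref{laplace} is circular: that lemma, and all of the comparison machinery of Section 2, is established for Riemannian manifolds with singularity, i.e.\ under the standing hypothesis that $\mathcal R$ is convex --- precisely the property you are trying to prove. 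As written, part (i) is therefore not proved; the correct move (and the paper's) is to cite Datar's convexity theorem.

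The rest of your proposal is essentially sound but diverges from the paper in part (iii). Your logarithmic cutoff in (ii) is the standard construction the paper alludes to. For the Dirichlet problem, the paper does not use Perron's method: it approximates $\omega$ by smooth K\"ahler metrics $\omega_i$ with ${\rm Ric}(\omega_i)\ge -C\,\omega_i$ converging to $\omega$ away from $D$ (Proposition 1.1 in \cite{D} or Theorem 2.1 in \cite{TW}), solves the Dirichlet problem for each $\omega_i$, and uses the uniform interior gradient estimate of Lemma \ref{gradient-esti} to pass to a limit $h$ that is locally Lipschitz across $D$ and harmonic on $U\cap\mathcal R$. Your Perron route can probably be made to work, but it carries extra burdens you do not discharge: the boundedness and comparison steps need a maximum principle valid for the merely continuous Perron subfunctions across the polar set $D$ (Lemma \ref{max-principle} is stated for smooth harmonic functions), and the local Lipschitz continuity of $h$ \emph{across} $D$, required by Definition \ref{manifold}(iii), again uses the gradient bound on $\mathcal R$ together with convexity --- so everything still hinges on the unproved item (i). The approximation-by-smooth-metrics argument is what lets the paper avoid these Perron-type technicalities.
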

\begin{proof}
The convexity in Definition \ref{manifold} follows from Theorem 1.1 in \cite{D}. The existence of cut-off function is standard. To get the solution of Dirichlet problem, we use the approximation of $\omega$ by smooth K\"ahler metrics with Ricci curvature bounded from below. By Proposition 1.1 in \cite{D}  or Theorem 2.1 in \cite{TW}, there is a sequence of smooth K\"ahler metrics $\omega_i$ satisfying $\omega_i \rightarrow \omega$ smoothly outside $D=\bigcup_{i=1}^k D_i$ and ${\rm Ric}(\omega_i)\,\geq\, - C\,\omega_i$ for some constant depending on $(M,\omega)$. Let $h_i$ be the solution of
\[ \left \{ \begin{array}{l}
\Delta_{\omega_i} h_i = 0,\\
h_i|_{\partial U}=b|_{\partial U}.
\end{array} \right. \]
By Lemma \ref{gradient-esti} , we know that for any $V\subset\subset U$, $h_i$ is  uniformly Lipschitz on $V$. For $\Omega\subset\subset \overline U\bigcap \mathcal R$, there exists a constant $C=C(\Omega)$ such that $|\nabla h_i|_{\Omega}\leq C$. So we can take the limit of $h_i$ to get $h$.
\end{proof}

\begin{thm}
For any limit space $X$ of conic K\"ahler-Einstein metrics in $\mathcal M(n,k,\delta,V)$,
every tangent cone of $X$ is a metric cone. There is a decomposition of $X$ into $\mathcal R\cup \mathcal S$ and $\mathcal S=\mathcal S_{2n-2}$. Moreover,
$ \mathcal S_{2k+1}=\mathcal S_{2k}$ and $\dim \mathcal S_{2k}\leq 2k$.
\end{thm}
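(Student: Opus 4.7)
The plan is to reduce the claim to the real-case structure theorem proved in the preceding subsection and then promote it using the K\"ahler condition. Three things must happen: the inclusion into the real class, the passage of the complex structure to the limit, and the forcing of Euclidean factors to be even-dimensional.

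First I would verify the inclusion $\mathcal M(n,k,\delta,V) \subseteq \mathcal M(V, \pi\sqrt{(2n-1)/\delta}, 2n)$ announced in the introduction. Three points need to be checked. (i) A conic K\"ahler-Einstein metric $(M,\omega)\in\mathcal M(n,k,\delta,V)$ is a Riemannian manifold with singularity in the sense of Definition \ref{manifold}; this is precisely the content of the lemma immediately preceding the statement. (ii) On the regular part $\mathcal R = M\setminus D$ the Einstein equation gives $\mathrm{Ric}(\omega) = t\,\omega \geq 0$ since $t\geq \delta>0$. (iii) The diameter is bounded by a Myers-type argument: on $\mathcal R$ the Ricci curvature satisfies $\mathrm{Ric}\geq \delta\,g$, so writing $\delta=(2n-1)K$ with $K=\delta/(2n-1)$, minimizing geodesics between regular points stay in $\mathcal R$ by convexity, so the classical Myers estimate applies and yields $\mathrm{diam}(M,d)\leq \pi\sqrt{(2n-1)/\delta}$. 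With the inclusion in hand, the real-case theorem applies to $X$ and already gives both the metric-cone structure of every tangent cone and the stratification $\mathcal S = \mathcal S_{2n-2}$ with $\dim\mathcal S_k\leq k$.

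Next I would upgrade the stratification to $\mathcal S_{2k+1}=\mathcal S_{2k}$ by exploiting the K\"ahler structure. The complex structure $J_i$ on each $M_i$ is parallel and compatible with the Riemannian metric $g_i$ induced by $\omega_i$, with uniform pointwise norm. On the regular part $\mathcal R\subset X$ the interior Hessian and gradient estimates of Section 2 together with the almost-splitting machinery of Sections 3--4 produce $C^{1,\alpha}$-convergence of $g_i$, which lets one extract a sub-limit $J_\infty$ that is again a compatible, parallel almost complex structure on $\mathcal R$. The same argument applied after rescaling produces a parallel compatible complex structure on the regular part of any tangent cone $T_xX$.

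Finally, I would argue that any Euclidean factor splitting off a tangent cone has even real dimension. If $T_xX\cong \mathbb R^l\times C(Y)$, then the splitting produced iteratively via Proposition \ref{split} comes from gradients of harmonic limit functions $h^\pm$; by Lemma \ref{harmonic-estimate} these gradients on the regular part are, in the limit, mutually orthogonal, of unit length, and parallel. Since $J_\infty$ is parallel and $g$-compatible, the $\mathbb R^l$-span of these gradients is $J_\infty$-invariant, which forces $l$ to be even. Combined with the real-case bound $\dim\mathcal S_k\leq k$, this gives $\mathcal S_{2k+1}=\mathcal S_{2k}$ and $\dim\mathcal S_{2k}\leq 2k$, completing the proof. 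The main obstacle is the rigorous passage to a parallel limit $J_\infty$ across the conic singularities: one must promote the $L^2$-Hessian control coming from the Bochner machinery of Section 2 to sufficient $C^0$-control of the Levi-Civita connection on the regular part to justify that the splitting directions are $J_\infty$-invariant, which is the genuine K\"ahler input beyond the real theorem.
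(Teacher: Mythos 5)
Your first two steps (the inclusion $\mathcal M(n,k,\delta,V)\subseteq\mathcal M(V,\pi\sqrt{(2n-1)/\delta},2n)$ via the lemma on conic K\"ahler--Einstein metrics being Riemannian manifolds with singularity, convexity of $M\setminus D$, and Myers on the regular part, followed by an appeal to the real-case structure theorem) are exactly the paper's argument. The gap is in the K\"ahler step. You propose to pass the complex structures $J_i$ to a parallel limit $J_\infty$ on the regular part of $X$ and of its tangent cones, claiming that the Section 2--4 estimates give $C^{1,\alpha}$-convergence of $g_i$ there. They do not: those sections only produce $L^2$-control of Hessians of \emph{approximating harmonic functions} on the manifolds $M_i$, plus Lipschitz bounds; with only a lower Ricci bound (which is all that is used here) the regular set of the limit is not known to be open or to carry a smooth structure along which $g_i$ converges in $C^{1,\alpha}$, and no construction of a parallel $J_\infty$, nor of parallel limit splitting gradients, is available from the paper's machinery. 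Likewise the assertion that the $\mathbb R^l$-factor of a tangent cone is spanned by $J_\infty$-invariant parallel gradients is exactly what would need proof; even granting $J_\infty$, turning $J_\infty$-invariance of directions into an extra isometric $\mathbb R$-factor of the cone requires rerunning a splitting argument, not just linear algebra.

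The paper avoids all of this by working quantitatively on the $M_i$ themselves, following the proof of Theorem 9.1 in [CCT]: if $h^+$ is an almost splitting function (small $L^2$ Hessian, as in Lemma \ref{harmonic-estimate} or Lemma \ref{harmonic-estimate-annual-2}), then $X=J\nabla h^+$ is a bounded vector field with $\int|\nabla X|^2$ small, because $J$ is parallel; Lemma \ref{almost-gradient-vector} then produces a harmonic function $\theta$ with $\nabla\theta$ $L^2$-close to $J\nabla h^+$ and with small $L^2$ Hessian, i.e.\ a new almost splitting direction, which is orthogonal to $\nabla h^+$ by compatibility of $J$ with the metric. Iterating, any tangent cone that splits off $\mathbb R^{2k+1}$ in fact splits off $\mathbb R^{2k+2}$, giving $\mathcal S_{2k+1}=\mathcal S_{2k}$ without ever constructing a limit complex structure. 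To repair your argument you would either have to supply the missing regularity theory for the limit (not proved in this paper and not needed), or replace your $J_\infty$-invariance step with precisely this almost-splitting argument on the sequence.
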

\begin{proof}
Since $M\setminus D$ is convex, we know that $diam(M,\omega)\leq \sqrt{\frac{2n-1}{\delta}}$. By the above lemma, we know that $$\mathcal M(n,k,\delta, V)\subseteq\mathcal M(V,\sqrt{\frac{2n-1}{\delta}},2n).$$ To prove that $ \mathcal S_{2k+1}=\mathcal S_{2k}$, we use the proof of Theorem 9.1 in \cite{CCT}. For the function $h^+$ constructed before Lemma \ref{harmonic-estimate}, we say that $\nabla h^+$ is an almost splitting direction. By Lemma \ref{almost-gradient-vector} below, we know that if $\nabla h^+$ is an almost splitting direction, $J\nabla h^+$ is also an almost splitting direction. So the the splitting direction is almost $J-$invariant. It follows that $ \mathcal S_{2k+1}=\mathcal S_{2k}$.
\end{proof}
\begin{lem} \label{almost-gradient-vector}
Under the conditions of Lemma \ref{harmonic-estimate-annual-1}, for a vector field $X$ on $A_p(a,b)$  which satisfies
\begin{align}\label{conditions}
|X|_{C^0(A_p(a,b))}\leq  C, \frac{1}{{\rm vol }(A_p(a,b))}\int_{A_p(a,b)}|\nabla X|^2 d\text{v}\leq \delta,
\end{align}
there exists a harmonic function $\theta$ defined in $A_p(a',b')$ such that
\begin{align}\label{gradient-est-app}
\frac{1}{{\rm vol }(A_p(a',b'))}\int_{A_p(a',b')}|\nabla\theta-X|^2 d\text{v}
< \Psi(\omega, \delta;a,b,a',b'),\end{align}
and
\begin{align}&\frac{1}{{\rm vol }A_p(a_1,b_1)}\int_{A_p(a_1,b_1)}|{\rm Hess }\,\theta|^2 d{\rm v}\notag\\
&\leq\Psi(\omega, \delta;a,b,a',b',a_1,b_1)\label{hessian-est},
\end{align}
where $A_p(a_1,b_1)$ is an even smaller annulus in $A_p(a',b')$.
\end{lem}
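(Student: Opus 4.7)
The plan is to construct $\theta$ in two steps. First I produce a scalar "potential" $f$ whose gradient is $L^2$-close to $X$, by exploiting the fact that the dual one-form $X^\flat$ is approximately closed and coclosed. Then I define $\theta$ as the harmonic replacement of $f$ in $A_p(a',b')$. The Hessian estimate will follow from a Bochner-type computation analogous to Lemma \ref{harmonic-estimate-annual-2}.

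For the first step, the hypothesis $\int |\nabla X|^2 \le \delta\,\mathrm{vol}(A_p(a,b))$ implies that both the divergence $\mathrm{div}(X)=\mathrm{tr}(\nabla X)$ and the exterior derivative $dX^\flat$ (the antisymmetric part of $\nabla X$) are small in $L^2$. I solve the Neumann problem
\[
\Delta f=\mathrm{div}(X)\ \text{in}\ A_p(a',b'),\qquad \partial_\nu f = \langle X,\nu\rangle\ \text{on}\ \partial A_p(a',b'),
\]
whose compatibility condition holds by the divergence theorem. The residual $W:=X-\nabla f$ is divergence-free and tangential along the boundary, with $dW^\flat = dX^\flat$ small in $L^2$. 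A Gaffney-type inequality on the annulus (whose relevant cohomology vanishes in our dimension range) then gives $\int |W|^2 \le C\int |dW^\flat|^2 \le \Psi(\omega,\delta;a,b,a',b')$, so $\int_{A_p(a',b')} |\nabla f - X|^2 \le \Psi$.

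Next, let $\theta$ solve $\Delta \theta = 0$ in $A_p(a',b')$ with $\theta|_{\partial\cap\mathcal R}=f|_{\partial\cap\mathcal R}$, obtained by Definition \ref{manifold}(iii). The difference $u:=\theta-f$ satisfies $u|_\partial=0$ and $\Delta u = -\mathrm{div}(X)$; multiplying by $u\gamma_\epsilon^2$, integrating by parts via Lemma \ref{part1}, applying Poincaré (Lemma \ref{Poincare1}), and sending $\epsilon\to 0$ yields $\int|\nabla u|^2\le \Psi$, which combines with the preceding step to give \eqref{gradient-est-app}. For \eqref{hessian-est}, the Bochner formula \eqref{bochner-inequ} for harmonic $\theta$ plus $\mathrm{Ric}\ge 0$ on $\mathcal R$ gives $\tfrac12 \Delta|\nabla\theta|^2 \ge |\mathrm{Hess}\,\theta|^2$. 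Choosing a cutoff $\varphi$ supported in $A_p(a',b')$ with $\varphi\equiv 1$ on $A_p(a_1,b_1)$ from Lemma \ref{cut-off} and integrating by parts twice (Lemmas \ref{part1}, \ref{part2}) gives $\int \varphi\,|\mathrm{Hess}\,\theta|^2 \le \tfrac12 \int |\nabla\theta|^2\,\Delta\varphi$. Splitting $|\nabla\theta|^2=|X|^2+(|\nabla\theta|^2-|X|^2)$, the difference piece is bounded by $C\|\Delta\varphi\|_{L^2}\|\nabla\theta-X\|_{L^2}\le \Psi$, while the main piece, via integration by parts, is $\int|X|^2\Delta\varphi = -2\int\langle X,\nabla_{\nabla\varphi}X\rangle\le 2\|X\|_\infty\|\nabla\varphi\|_{L^2}\|\nabla X\|_{L^2}\le C\sqrt\delta$; both are small, yielding \eqref{hessian-est}.

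The main obstacle is justifying the Gaffney-type estimate $\int|W|^2\le C\int|dW^\flat|^2$ in the singular setting. On a smooth annulus this is standard Hodge theory, but here the singular set $\mathcal S$ obstructs its direct application. I would proceed by approximating via $\gamma_\epsilon$, integrating by parts as in Lemma \ref{part1}, and controlling boundary and singular contributions as in Lemma \ref{harmonic-estimate-annual-2}, using the monotonicity from Section 2 to verify that the annulus admits no harmonic $1$-forms with the relevant (tangential, coclosed) boundary conditions in our dimension range.
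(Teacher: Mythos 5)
The core of your argument --- producing a potential $f$ with $\nabla f\approx X$ via a Hodge-type decomposition --- has a genuine gap. The step $\int|W|^2\le C\int|dW^\flat|^2$ for the divergence-free, tangential residual $W$ is a Poincar\'e inequality for $1$-forms, and nothing in the paper's setting supplies it: a lower Ricci bound plus noncollapsing controls the spectral gap for functions, not for $1$-forms, and the constant would have to be uniform over the whole class $\mathcal M(V,D,n)$ with the singular set $\mathcal S$ present. It also presupposes that the annulus carries no harmonic $1$-forms with the relevant boundary conditions; the almost-volume-cone hypothesis \eqref{cone-volume-condition-2} does constrain the geometry, but turning that into a quantitative vanishing statement for (almost) harmonic $1$-forms on a possibly singular, only Gromov--Hausdorff-controlled annulus is essentially a separate theorem, and your appeal to ``monotonicity from Section 2'' does not provide it. (A flat cylinder $S^1\times\mathbb R^{n-1}$ with $X=\partial_\theta$, $\nabla X=0$, shows the conclusion genuinely uses the cone condition: there the lemma's output function cannot exist on an annulus wrapping the circle, so any proof must exploit \eqref{cone-volume-condition-2} quantitatively --- which your route does only through the unproved Gaffney step.) In addition, the Neumann problem $\Delta f=\mathrm{div}\,X$, $\partial_\nu f=\langle X,\nu\rangle$ is not available here: Definition \ref{manifold}(iii) only provides Dirichlet solvability, and $\partial A_p(a',b')$ is a metric sphere with no regularity, so the conormal data is not even well defined.

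The paper avoids all of this with one identity you are missing: it sets $\theta_1=\langle X,\nabla h\rangle$, where $h$ is the almost-radial function of \eqref{f-harmonic-radial}, so that
\begin{align}
\nabla\theta_1-X=\langle\nabla X,\nabla h\rangle+\langle X,\mathrm{Hess}\,h-g\rangle,\notag
\end{align}
which is small in $L^2$ directly from your hypothesis \eqref{conditions} and the Hessian estimate of Lemma \ref{harmonic-estimate-annual-2}; then $\theta$ is the Dirichlet harmonic replacement of $\theta_1$, and $\int\langle\nabla\theta-\nabla\theta_1,X\rangle$ is controlled by a divergence computation with the cut-off $\gamma_\eta$ (Lemma \ref{part1}), giving \eqref{gradient-est-app} without any Hodge theory or Neumann problem. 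Your treatment of the Hessian bound \eqref{hessian-est} --- Bochner with a cut-off, comparing $\Delta|\nabla\theta|^2$ against $\Delta|X|^2$ and using $\int\langle\nabla\varphi,\langle X,\nabla X\rangle\rangle\le C\sqrt\delta$ --- is essentially the paper's argument and is fine once \eqref{gradient-est-app} is in hand; it is the construction of the approximating gradient that needs to be replaced by the $\langle X,\nabla h\rangle$ device.
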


\begin{proof}
Let   $h$ be the harmonic function  constructed  in (\ref{f-harmonic-radial}) and $\theta_1=\langle X,\nabla h\rangle$.  Then
\begin{align}
\nonumber \nabla \theta_1=\langle\nabla X, \nabla h\rangle+\langle X,{\rm Hess }\,h\rangle,
\end{align}
It follows
\begin{align}
\nonumber &\int_{A_p(a,b)\cap \mathcal R}|\nabla\theta_1-X|^2d\text{v}\\
&\leq 2\int_{A_p(a,b)\cap \mathcal R}(\langle\nabla X, \nabla h\rangle^2d{\rm v}+\langle X,{\rm Hess }\,h-g\rangle^2)d{\rm v}.\notag
\end{align}
Thus by  (\ref{conditions}) and Lemma \ref{harmonic-estimate-annual-2}, we  get
\begin{align}\label{gradient-est}
\frac{1}{{\rm vol }(A_p(a,b))}\int_{A_p(a,b)\cap \mathcal R}|\nabla\theta_1-X|^2 d{\rm v}
\leq \Psi.\end{align}

Let $\theta$   be  a solution  of equation,
\begin{align}
\Delta\theta=0,~{ \rm in}~A_p(a,b)\cap \mathcal R, \end{align}
  with $\theta=\theta_1$   ~on  $\partial A_p(a,b)\cap \mathcal R$.
Then
\begin{align}
0&=\int_{A_p(a,b)}{\rm div } \left(\gamma_\eta(\theta-\theta_1)X\right)d{\rm v}\notag \\
&=\int_{A_p(a,b)}\left((\theta-\theta_1)\langle \nabla\gamma_\eta,X\rangle +\langle \nabla\theta-\nabla\theta_1,X\rangle+(\theta-\theta_1){\rm div } X\right)d{\rm v}\notag
\end{align}
Since $$\int_{A_p(a,b)}(\theta-\theta_1)\langle \nabla \gamma_\eta,X\rangle d{\rm v}\leq C\left(\int_{A_p(a,b)}|\nabla \gamma_\eta|^2{\rm div } \right)^{\frac{1}{2}}\leq C\sqrt{\eta},$$ taking $\eta\rightarrow 0$ we have
\begin{align}\label{divergence}
\int_{A_p(a,b)\cap \mathcal R}\langle \nabla\theta-\nabla\theta_1,X\rangle d{\rm v}\leq \Psi.
\end{align}
On the other hand, from
\begin{align}
0&=\int_{A_p(a,b)}{\rm div } \left( \gamma_\eta^2(\theta_1-\theta)\nabla \theta\right)\\
&=\int_{A_p(a,b)}\gamma_\eta^2\langle \nabla \theta_1-\nabla \theta,\nabla \theta\rangle+2\int_{A_p(a,b)}\gamma_\eta(\theta_1-\theta)\langle \gamma_\eta,\nabla\theta\rangle\notag\\
&\leq \int_{A_p(a,b)}\gamma_\eta^2\langle \nabla \theta_1-\nabla \theta,\nabla \theta\rangle+\sqrt{\eta}\int_{A_p(a,b)}\gamma_\eta^2|\nabla \theta|^2 d{\rm v}+\frac{C}{\sqrt{\eta}}\int_{A_p(a,b)}|\nabla \gamma_\eta|^2d{\rm v}, \notag
\end{align}
 we get
 \begin{align}
 (1-\sqrt{\eta})\int_{A_p(a,b)} \gamma_\eta^2|\nabla \theta|^2 d{\rm v}&\leq\int_{A_p(a,b)}  \gamma_\eta^2\langle \nabla\theta,\nabla\theta_1\rangle d\text{v}+C\sqrt{\eta}\notag\\
 &\leq \frac{1}{2}\int_{A_p(a,b)} \gamma_\eta^2|\nabla \theta|^2 d{\rm v}+\frac{1}{2}\int_{A_p(a,b)} \gamma_\eta^2|\nabla \theta_1|^2 d{\rm v}+C\sqrt{\eta}\notag.
\end{align}
Taking $\eta\rightarrow 0$, we have
$$\int_{A_p(a,b)} |\nabla \theta|^2 d{\rm v}\le \int_{A_p(a_2,b_2)} |\nabla \theta_1|^2 d{\rm v}<C.$$
Hence,
\begin{align}
\nonumber & \int_{A_p(a,b)}|\nabla \theta-X|^2d{\rm v}\\
&=\int_{A_p(a,b)}(|\nabla \theta|^2+|X|^2-2\langle \nabla\theta ,X\rangle)d{\rm v}\notag\\
\nonumber &=\int_{A_p(a,b)}(\langle\nabla \theta, \nabla\theta_1\rangle+|X|^2-2\langle \nabla\theta ,X\rangle)d{\rm v}&\\
&=\int_{A_p(a,b)}(\langle \nabla\theta_1-X,\nabla\theta\rangle+\langle X,X-\nabla\theta_1\rangle+\langle X,\nabla\theta_1-\nabla \theta\rangle)d{\rm v}.\notag
\end{align}
 Therefore, combining (\ref{conditions}) and (\ref{divergence}),
 we derive (\ref{gradient-est-app}) immediately.

To get (\ref{hessian-est}), we choose a cut-off function  $\phi$ which is supported in $A_p(a,b)$ with bounded gradient and
Lapalacian as in Lemma  \ref{cut-off}.   Then  by
 the Bochner identity,  we have
\begin{align}
 \int_{A_p(a,b)}\frac{1}{2}\phi\Delta|\nabla \theta|^2d{\rm v}=\int_{A_p(a,b)}\phi(|{\rm hess }\,\theta|^2+{\rm Ric }(\nabla \theta,\nabla \theta)) d{\rm v}.\notag
\end{align}
Since
\begin{align}
 \int_{A_p(a,b)}\frac{1}{2}\phi\Delta|X|^2d{\rm v}
 =-\int_{A_p(a,b)}\langle\nabla \phi,\langle X,\nabla X\rangle\rangle d{\rm v},\notag
\end{align}
 we obtain
\begin{align}
\nonumber \int_{A_p(a,b)}\phi(|{\rm Hess }\,\theta|^2  d{\rm v}& \leq \int_{A_p(a,b)}\frac{1}{2}\phi\Delta(|\nabla \theta|^2-|X|^2)d{\rm v}+C\delta.\notag&
\end{align}
Therefore,  by Lemma \ref{part1}, we  derive  (\ref{hessian-est}) from  (\ref{gradient-est-app}).
\end{proof}

Let $(M_i,\omega_i)$ be a sequence of conic K\"ahler-Einstein metrics in $\mathcal M(n,k,\delta,V)$:
$${\rm Ric}(\omega_i)\,=\,t_i\omega_i+\sum_{j=1}^k (1-\beta_i^j)D_i.$$
Assuming that there is $0<\epsilon, 0<T<1$ such that
$\beta_i^j\in [\epsilon, T]$, we can characterize the cone angle in the limit space.
\begin{prop}
If there is a tangent cone $T_x X\cong \mathbb{C}_{\bar\beta} \times \mathbb{R}^{2n-2}$, we have
$$1-\bar\beta=\sum_{j=1}^k m_i(1-\beta_\infty^j),$$
where $\beta_\infty^j$ is the limit of $\beta_i^j$.
\end{prop}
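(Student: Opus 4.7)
The plan is to integrate the conic K\"ahler-Einstein identity $\mathrm{Ric}(\omega_i)=t_i\omega_i+2\pi\sum_{j=1}^k(1-\beta_i^j)[D_i^j]$ over a small holomorphic transverse disk, and then pass to the limit. The Ricci form is invariant under the rescaling $\omega_i\rightsquigarrow r_i^{-2}\omega_i$, while the Einstein term $t_i\omega_i$ contributes only a rescaled-area which is $O(r_i^2)$ on such a disk, so in the limit only the divisor contributions and the cone-angle defect survive.

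Concretely, I would fix a regular point $y^*\in\{0\}\times\mathbb{C}^{n-1}$ of the singular axis of $T_xX\cong\mathbb{C}_{\bar\beta}\times\mathbb{C}^{n-1}$, say $y^*=(0,y_2,\ldots,y_n)$ with $(y_2,\ldots,y_n)\ne 0$, and the holomorphic transverse disk $\Delta^\infty=\{(z_1,y_2,\ldots,y_n):|z_1|\le r\}$ for small $r$. On the regular part of the tangent cone, convergence of $(M_i,r_i^{-2}\omega_i,J_i)$ to $(T_xX,\omega_{\mathrm{cone}},J_\infty)$ is smooth, so for $i$ large one produces an embedded $J_i$-holomorphic disk $\Delta_i\subset M_i$ with boundary $\partial\Delta_i$ inside the regular part and close to $\partial\Delta^\infty$. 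After a small perturbation, $\Delta_i$ is transverse to every local irreducible component of each $D_i^j$. Integrating the K\"ahler-Einstein identity on $\Delta_i$ and using Poincar\'e-Lelong $2\pi[D_i^j]=dd^c\log|f_i^j|^2$ yields
\begin{align*}
\int_{\Delta_i}\mathrm{Ric}(\omega_i)=t_i\int_{\Delta_i}\omega_i+2\pi\sum_{j=1}^k(1-\beta_i^j)\,\#(\Delta_i\cap D_i^j).
\end{align*}

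To pass to the limit, note first that $\mathrm{Ric}(\omega_i)=\mathrm{Ric}(r_i^{-2}\omega_i)$ is scale-invariant, while $\int_{\Delta_i}\omega_i=r_i^2\int_{\Delta_i}(r_i^{-2}\omega_i)=O(r_i^2)$, so $t_i\int_{\Delta_i}\omega_i\to 0$ using $t_i\in[\delta,\delta^{-1}]$. The intersection numbers $\#(\Delta_i\cap D_i^j)$ are non-negative integers which stabilize to an integer $m_j$, since for $i$ large a generic transverse disk sees a fixed number of local branches of $D_i^j$ converging to the axis $\{0\}\times\mathbb{C}^{n-1}$. On the other hand, writing $\mathrm{Ric}(r_i^{-2}\omega_i)=dd^c\varphi_i$ on $\Delta_i\setminus D_i$ for a local Ricci potential and applying Stokes away from the divisor crossings, the boundary integral converges by smooth convergence of $\varphi_i$ on the regular part to the cone potential, while the divisor crossings account exactly for the sum on the right. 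Thus $\lim_{i\to\infty}\int_{\Delta_i}\mathrm{Ric}(\omega_i)=\int_{\Delta^\infty}\mathrm{Ric}(\omega_{\mathrm{cone}})=2\pi(1-\bar\beta)$ by the defining cone-angle relation on $\mathbb{C}_{\bar\beta}$, and equating gives $1-\bar\beta=\sum_{j=1}^k m_j(1-\beta_\infty^j)$.

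The main obstacle is producing the $J_i$-holomorphic approximating disk $\Delta_i$ with boundary in the regular part and transverse to the local divisor branches; this is the only step where the complex structure genuinely enters. I would handle it by selecting $y_i\in M_i$ with $r_i^{-1}d(y_i,x_i)\to d(y^*,x^*)$, using smooth convergence $J_i\to J_\infty$ on compacts of the regular part, and producing $\Delta_i$ as an implicit-function graph over $\Delta^\infty$; transversality to each $D_i^j$ is generic because near $y^*$ the local components of $D_i^j$ are smooth holomorphic hypersurfaces almost parallel to the limit axis. A secondary issue is making the Stokes argument rigorous across divisor crossings; this is handled by excising small geodesic disks of radius $\eta\to 0$ around each crossing point of $\Delta_i\cap D_i^j$ and using Poincar\'e-Lelong to extract the exact $2\pi(1-\beta_i^j)$ contribution per crossing.
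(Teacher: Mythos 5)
Your outline has the right cohomological picture, but it hinges on two steps that are not available in this paper's framework and are not supplied by your proposal. First, the approximating object: you need an embedded $J_i$-holomorphic disk $\Delta_i$ lying over the transverse disk $\Delta^\infty$. But $\Delta^\infty$ is not contained in the regular part of the tangent cone --- its center is exactly the singular point of $\mathbb{C}_{\bar\beta}\times\mathbb{C}^{2n-2}$, which is where the divisors $D_i^j$ accumulate and where all the intersection points you want to count occur. An implicit-function-theorem graph construction over $\Delta^\infty$ therefore cannot be run: it would require uniform control of $J_i$ and of the rescaled metrics across the very region where no smooth convergence is claimed (and none is proved in this paper; the results established here are purely metric: tangent cones are metric cones, codimension-two singular sets, volume convergence). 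Producing holomorphic slices through the singular region with controlled geometry is precisely the kind of statement that normally requires partial $C^0$-estimate/H\"ormander machinery, which is well beyond what the paper develops. Second, even granting $\Delta_i$, your displayed identity obtained from Poincar\'e--Lelong is a tautology (it is just the current equation paired with $[\Delta_i]$); the entire content of the argument is the claim $\lim_i\int_{\Delta_i}\mathrm{Ric}(\omega_i)=2\pi(1-\bar\beta)$, and your evaluation of this limit again invokes smooth convergence of Ricci potentials (hence of metrics and complex structures) on a neighborhood of $\partial\Delta_i$, which is exactly the unproved regularity. So the proposal, as written, has a genuine gap at its central step.

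The paper's proof is organized precisely to avoid holomorphic slices. Following [Ti], it takes $\Sigma_z$ to be a fiber of the Gromov--Hausdorff almost-splitting map $(\Phi_i,u_i)$ furnished by the Cheeger--Colding--Tian slice theorem (Theorem 2.37 in [CCT]), together with the integral Hessian and volume estimates of Sections 3--5; these slices are merely $2$-dimensional surfaces, not holomorphic curves, and the only analytic input about them is the metric statement $V(z)\approx 2\pi\bar\beta$. The complex geometry enters only through the restriction of $K_M^{-1}$ to $\Sigma_z$: since the normal directions are almost split off, $K_M^{-1}|_{\Sigma_z}$ is topologically $T\Sigma_z$, and a Stokes argument on the unit circle bundle with a section equal to the outward normal along $\partial\Sigma_z$ converts the curvature integral (which carries the conic contributions $2\pi\sum m_j(1-\beta_i^j)$ and a negligible $t_i\omega_i$-term) into the boundary rotation (which carries $\bar\beta$) minus index contributions, giving $\bar\beta-\chi(\Sigma_z)+\sum m_j(1-\beta_i^j)=o(1)$; the topological bound $\chi(\Sigma_z)\le 1$ then forces $\chi(\Sigma_z)=1$ and yields the identity. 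If you want to salvage your route, you would have to first establish smooth convergence of $(r_i^{-2}\omega_i,J_i)$ away from the singular set and a mechanism (e.g.\ partial $C^0$-estimate) for producing holomorphic transverse disks --- in effect importing a different and heavier toolkit than the one this paper uses.
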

\begin{proof}
We use the arguments in \cite{tian15}. Assume that
$$(C_x,x,\omega_x)\,=\,\lim_{i\rightarrow \infty}(X,\frac{d}{r_i^2},p_i).$$
Using the estimate in section 3 and section 4, as Theorem 2.37 in \cite{CCT}, there are $\epsilon_i\rightarrow 0$ and maps $(\Phi_i,u_i): B_{p_i}(\frac{5}{2}, r_i^{-2}\omega_i)\rightarrow B_0(\frac{5}{2})$ such that
$$\int_{|z|\leq 1}|V(z)-2\pi \gamma|dz\leq \epsilon_i,$$
where $V(z)$ is the volume of $\Sigma_z=\Phi^{-1}(z)\cap u_i^{-1}[0,1]$. $K_M^{-1}$ restricts to a line bundle on $\Sigma_z$ whose curvature $\Omega$ is
$${\rm Ric}(r_j^{-2}\omega_j)\,=\,t_j\omega_j+2\pi\sum (1-\beta^i_j)D_i.$$
Let $\pi: S\Sigma_z\rightarrow \Sigma_z$ be the unit circle bundle, then
$$\pi^*\Omega=d\theta.$$ Since $K_M^{-1}$ is topologically equivalent to $T_{\Sigma_z}$, there is a section $v$ of $K_M^{-1}$ which is equal to the outward unit normal of $\partial \Sigma_z$ along the boundary of $\Sigma_z$ and has nondegenerate zeroes outside $\Sigma_z\setminus \cup D_i^j$. Put
$$s=\frac{v}{||v||}: \Sigma_z\setminus (D_i^j\cup v^{-1}(0))\rightarrow S\Sigma_z.$$
By Stokes theorem, we have
$$\int_{\Sigma_z\setminus \cup D_i^j}\Omega=\int_{\partial\Sigma_z}s^*\theta-\sum_{\nu(p)=0\text{ or }p\in \cup D_i^j}\lim_{\delta\rightarrow 0}\int_{\partial B_\delta(p,r_j^{-2}\omega_j)}s^*\theta.$$
It follows that $$\bar\beta-\chi(\Sigma_z)+\sum m_i(1-\beta_j^i)=o(1).$$
Since $\chi(\Sigma_z)\leq 1$, we must have $\chi(\Sigma_z)=1$, and
$$1-\bar\beta=\sum m_i(1-\beta_j^i)+o(1).$$
Taking $j\rightarrow \infty$,
we get
$$1-\bar\beta=\sum m_i(1-\beta_\infty^i).$$
\end{proof}

\end{document}